\def\beq{\begin{equation}}
\def\eeq{\end{equation}}
\def\ba{\begin{array}}
\def\ea{\end{array}}
\newenvironment{abs}{\textbf{Abstract}\mbox{  }}{ }
\newenvironment{key words}{\textbf{Keywords}\mbox{  }}{ }
\newtheorem{thm}{Theorem}[section]
\newtheorem{lm}[thm]{Lemma}
\newtheorem{prop}[thm]{Proposition}
\newtheorem{cor}[thm]{Corollary}
\theoremstyle{definition}
\renewenvironment{proof}{\noindent{\textbf{Proof.}}}{\hfill$\Box$}
\newtheorem{rem}[thm]{Remark}
\newtheorem{df}[thm]{Definition}
\theoremstyle{remark}
\numberwithin{equation}{section}
\begin{document}
%old title: On an extension of sharp Hardy-Littlewood-Sobolev inequality and related Sharp Sobolev inequalities on sphere 4-26-2012

\pagestyle{plain}
%\today completed on 4-29-2012. MJ
\title{Reversed Hardy-Littewood-Sobolev inequality}
%On the extension of sharp Hardy-Littewood-Sobolev inequality II: the negative exponent}
\author  {Jingbo Dou and Meijun Zhu}
\address{Jingbo Dou, School of Statistics, Xi'an University of Finance and
Economics, Xi'an, Shaanxi, 710100, China and Department of Mathematics, The University of Oklahoma, Norman, OK 73019, USA}

\email{jbdou@xaufe.edu.cn}

\address{ Meijun Zhu, Department of Mathematics,
The University of Oklahoma, Norman, OK 73019, USA}

\email{mzhu@math.ou.edu}

\maketitle

\begin{abs}
In this paper, we obtain a reversed  Hardy-Littlewood-Sobolev inequality:  for $0<p, \, t<1$ and $\lambda=n-\alpha <0$ with $
1/p +1 /t+ \lambda /n=2$, there is a best constant $N(n,\lambda,p)>0$, such that
\begin{equation*}\label{1-1}
|\int_{\mathbb{R}^n} \int_{\mathbb{R}^n} f(x)|x-y|^{-\lambda} g(y) dx dy|\ge N( n,\lambda,p)||f||_{L^p(\mathbb{R}^n)}||g||_{L^t(\mathbb{R}^n)}
\end{equation*}
holds for all nonnegative  functions  $f\in L^p(\mathbb{R}^n), \, g\in L^t(\mathbb{R}^n).$
For $p=t$, we prove the existence of extremal
functions,  classify all extremal functions via the method of moving sphere,  and compute the best constant.
\end{abs}

\smallskip

\footnotetext{ \textit{Mathematics Subject Classification(2010).}  	35A23, 42B37  }
\footnotetext{\textit{Key words and phrases. } Hardy-Littlewood-Sobolev inequality; Extremal function;  Sharp constant; Moving sphere method}
%---------------------------------------------------------------------------------

\section{Introduction\label{Section 1}}
The classic sharp Hardy-Littlewood-Sobolev (HLS) inequality (\cite{HL1928, HL1930, So1963, Lieb1983}) states that
\begin{equation}\label{class-HLS}
|\int_{\mathbb{R}^n} \int_{\mathbb{R}^n} f(x)|x-y|^{-(n-\alpha)} g(y) dx dy|\le N(n,\lambda,p)||f||_{L^p(\mathbb{R}^n)}||g||_{L^t(\mathbb{R}^n)}
\end{equation}
holds for all $f\in L^p(\mathbb{R}^n), \, g\in L^t(\mathbb{R}^n),$ $1<p, \, t<\infty$, $0<\lambda:=n-\alpha <n$ and
$
1/p +1 /t+ \lambda/n=2.$
Lieb \cite{Lieb1983} proved the existence of the extremal functions to the inequality with sharp constant and computed the best constant in the case of $p=t$ (or one of these two parameters is two). The sharp HLS inequality implies sharp Sobolev inequality, Moser-Trudinger-Onofri and Beckner inequalities \cite{B1993}, as well as Gross's logarithmic Sobolev inequality \cite{Gr}. All these inequalities play significant role in solving global geometric problems, such as Yamabe problem, Ricci flow problem, etc. Besides  recent extension of the sharp HLS on the Heisenberg group by Frank and Lieb \cite{FL2012},
there are at least two other directions concerning the extension of the above sharp HLS inequality: (1) Extending the sharp inequality on general manifolds, see, for example, Dou and Zhu \cite{DZ2012-2} for such an extension on the upper half space and related research; (2) Extending it for the negative exponent $\lambda$ (that is for the case of $\alpha>n$). In this paper, we extend the sharp HLS inequality for the negative exponent $\lambda$.

More specifically, in this paper, we prove that the reversed
Hardy-Littlewood-Sobolev inequality  for $0<p, \, t<1$, $\lambda<0$ holds for all nonnegative $f\in L^p(\mathbb{R}^n), \, g\in L^t(\mathbb{R}^n).$ For $p=t$, the existence of extremal
functions is proved, all extremal functions are classified via the method of moving sphere,  and the best constant is computed.

Prior to our research, it seems that the only result concerning $\lambda<0$ was discussed by Stein and Weiss \cite{SW1960} in 1960, where they showed that a HLS inequality (not in the sharp form) for $p\in ((n-1)/n, n/\alpha)$  holds (Theorem G in \cite{SW1960}). However,  the range for $p$ does not include the important conformal invariant case $p= t=2n/(n+\alpha)$, thus it seems hard to find the sharp constant. On the other hand, recent results on sharp Sobolev type inequalities with negative exponents on $\mathbb{S}^n$, (see, e.g. Yang and Zhu \cite{YZ2004}, Hang and Yang \cite{HY2004} for the Paneitz operator on $\mathbb{S}^3$,  and Ni and Zhu \cite{NZ2008a} for the Laplacian operator on $\mathbb{S}^1$),  strongly indicate
  that certain HLS inequalities for $\lambda<0$ shall hold.
  %, we ask: what is the analogous HLS if $\lambda<0$ and both $p, t \in (0, 1)$?

%  For instance, it was shown in \cite{NZ2008a} that \eqref{1-1} with a reverse inequality hold on $\mathbb{S}^1$ with $p=-2$. We may ask: is it the case for all negative $p$? One may ask a more broad question: is there any similar inequality to \eqref{1-1} for $\lambda<0$?

The main purpose of this paper is to establish the following reversed  HLS inequality and its sharp form.

% we give a complete answer to the above question.

%Our main result is the following theorem.

\begin{thm}\label{ext-HLS}For $n\ge1, 0<p, \, t<1$ and $\lambda=n-\alpha<0$ satisfying
\begin{equation}\label{1-2}
\frac1 p +\frac 1 t+\frac \lambda n=2,
\end{equation}
there is a best constant $N^{*}( n,\alpha, p)>0$, such that, for all nonnegative $f\in L^p(\mathbb{R}^n), \, g\in L^t(\mathbb{R}^n),$
\begin{equation}\label{HLS-ineq-1}
\int_{\mathbb{R}^n} \int_{\mathbb{R}^n} f(x)|x-y|^{-\lambda} g(y) dx dy\ge N^*(n,\alpha, p)||f||_{L^p(\mathbb{R}^n)}||g||_{L^t(\mathbb{R}^n)}.
\end{equation}
 \end{thm}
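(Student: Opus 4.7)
The plan is to mirror Lieb's strategy for the classical HLS inequality with every estimate reversed, exploiting the fact that for $\lambda<0$ the kernel $|x-y|^{-\lambda}$ is a radially \emph{increasing} function of $|x-y|$ rather than a decreasing one. First, the scale-invariance produced by condition \eqref{1-2} lets me normalize $\|f\|_{L^p}=\|g\|_{L^t}=1$, reducing the theorem to a positive lower bound on the double integral for normalized inputs.

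Next, I would prove a reverse Riesz rearrangement inequality
\[
\int_{\mathbb{R}^n}\int_{\mathbb{R}^n} f(x)|x-y|^{-\lambda}g(y)\,dx\,dy \;\geq\; \int_{\mathbb{R}^n}\int_{\mathbb{R}^n} f^{*}(x)|x-y|^{-\lambda}g^{*}(y)\,dx\,dy,
\]
where $f^{*}, g^{*}$ are the symmetric decreasing rearrangements of $f, g$. The device is to apply the classical Riesz rearrangement inequality to the radially \emph{decreasing} truncation $M-\min(M,|\cdot|^{-\lambda})$, observe that the extraneous term $M(\int f)(\int g)$ is invariant under rearrangement, and then let $M\to\infty$ via monotone convergence. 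A preliminary approximation of $f\in L^p$ and $g\in L^t$ by bounded compactly supported functions (which do belong to $L^1$) makes the intermediate integrals finite and justifies the limit. This reduces the problem to the case of symmetric decreasing $f, g$.

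Third, for symmetric decreasing $f, g$, I would use the triangle-inequality bound $|x-y|^{-\lambda}\geq\big| |x|-|y| \big|^{-\lambda}$ (valid because $-\lambda>0$) and pass to polar coordinates, reducing the estimate to a one-dimensional weighted reverse HLS inequality on $(0,\infty)$. Applying the reverse H\"older inequality for $0<p<1$ with conjugate exponent $p'=p/(p-1)<0$ gives $\int f\,(K*g)\geq \|f\|_{L^p}\|K*g\|_{p'}$, where $K(z)=|z|^{-\lambda}$; combined with a reverse Minkowski integral estimate, it suffices to bound $\|K*g\|_{p'}$ from below in terms of $\|g\|_{L^t}$. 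Since $K*g(x)$ is strictly positive and, for symmetric decreasing $g$, admits an explicit lower bound of the form $c(|x|+r_g)^{-\lambda}\int g$ for a suitable radius $r_g$, this yields a positive constant. The sharp constant $N^{*}(n,\alpha,p)$ is then realized as the infimum of $J(f,g):=\int\int f|x-y|^{-\lambda}g\big/(\|f\|_{L^p}\|g\|_{L^t})$.

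The principal obstacle is the reverse rearrangement step: the kernel blows up at infinity and $L^p$ with $p<1$ does not embed into $L^1$, so the approximation by compactly supported functions must be arranged both to preserve the $L^p, L^t$ norms in the limit and to control the tails of the double integral, whose finiteness is not automatic. Once this is handled, the reverse H\"older manipulation with negative conjugate exponent in Step~3 is essentially bookkeeping, and positivity of the resulting constant immediately upgrades to the sharp form \eqref{HLS-ineq-1} by taking the infimum over admissible $(f,g)$.
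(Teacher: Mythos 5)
Your Step~2 (reverse Riesz rearrangement via the truncation $M-\min(M,|z|^{-\lambda})$) is correct, and the $L^1$ approximation by monotone truncations does pass to the limit by monotone convergence, since the inequality is of the form $\geq$. But this route is not the paper's: the paper proves Theorem~\ref{ext-HLS} without any symmetrization, by first establishing the operator inequality $\|I_\alpha f\|_{L^q}\geq C\|f\|_{L^p}$ (Proposition~\ref{HLS-equivalent}) for $q<0<p<1$, and deducing the bilinear form from it via a single application of reverse H\"older. That operator inequality is proved by splitting the kernel at a scale $\rho$, using the converse Young inequality on each piece to get a weak-type reversed estimate, and then invoking a newly-formulated Marcinkiewicz interpolation theorem (Proposition~\ref{Mar-1}) adapted to exponents $q<0<p<1$. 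Rearrangement appears in the paper only later, in Section~3, in the search for extremals at $p=t$.

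The genuine gap in your argument is Step~4. Reducing to symmetric decreasing $f,g$ does not circumvent the core estimate, and the pointwise bound $K*g(x)\geq c(|x|+r_g)^{-\lambda}\int g$ does not by itself give $\|K*g\|_{L^{p'}}\geq C\|g\|_{L^t}$ with a constant independent of $g$. Since $p'<0$, lower-bounding the norm requires upper-bounding $\int (K*g)^{p'}\,dx$, and your estimate gives $\int (K*g)^{p'}\leq C(\int g)^{p'}r_g^{\,n-\lambda p'}$, hence $\|K*g\|_{L^{p'}}\geq C'(\int g)\,r_g^{\,n(1/t-1)}$. But for $t<1$ the quantity $\int g$ is not controlled by $\|g\|_{L^t}$ (it can be arbitrarily small or infinite at fixed $\|g\|_{L^t}=1$), and the calibration of $r_g$ against the entire distribution of $g$ is exactly where the difficulty lives. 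In fact what you are trying to show in Step~4, namely $\|K*g\|_{L^{p'}}\geq C\|g\|_{L^t}$, \emph{is} the reversed HLS inequality in operator form (Proposition~\ref{HLS-equivalent} with $p\leftrightarrow t$, $q\leftrightarrow p'$), so the sketch is circular unless one supplies an independent proof of that step. The reverse Minkowski inequality alone does not help here: applied directly it produces $\int_{\mathbb{R}^n}|z|^{-\lambda p'}\,dz$, which diverges at the origin (because $\lambda p'>n$), giving only the trivial bound $\|K*g\|_{L^{p'}}\geq 0$. Some substitute for the weak-type/interpolation machinery is needed; the paper's Proposition~\ref{Mar-1} is that substitute, and your proposal as written does not contain one.
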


 For $p<1$, the convention notation for $f(x) \in L^p(\mathbb{R}^n)$  means $\int_{\mathbb{R}^n}|f(x)|^p dx <\infty$.

 %Since inequality \eqref{HLS-ineq-1} is reversed (comparing with \eqref{class-HLS}), the usual density argument can not be used here. Namely, we have to prove the above inequality for all functions  $f\in L^p(\mathbb{R}^n)$ and $g \in L^t(\mathbb{R}^n)$. This is also the major difficulty for us to derive the sharp form for the inequality on $\mathbb{R}^n.$
%

%%revised 4-15-2013

 For $p=t=2n/(n+\alpha)$, we are able to compute the sharp constant. In this case, inequality \eqref{HLS-ineq-1} is equivalent to the following reversed HLS on sphere $\mathbb{S}^n$: for all nonnegative $F\in L^p(\mathbb{S}^n), \, G\in L^p(\mathbb{S}^n),$
\begin{equation}\label{HLS-ineq-1-sph}
\int_{\mathbb{S}^n} \int_{\mathbb{S}^n} F(\xi)|\xi-\eta|^{\alpha-n} G(\eta) dS_\xi dS_\eta\ge N^*(n,\alpha)||F||_{L^p(\mathbb{S}^n)}||G||_{L^p(\mathbb{S}^n)},
\end{equation}
 where and throughout the paper $|\xi-\eta|$ is denoted as the chordal distance from $\xi$ to $\eta$ in $\mathbb{R}^{n+1}$, and $N^*(n,\alpha)$ is the same as $N^*(n, \alpha, 2n/(n+\alpha))$.

For $\alpha \in (0,\infty),$ define the classic singular integral operator on $\mathbb{S}^n$  by
\begin{equation} \label{sig-ope-1}
\tilde I_\alpha F(\xi)=\int_{\mathbb{S}^n}\frac{F(\eta)}{|\xi-\eta|^{n-\alpha}}dS_\eta, \quad ~~~~~~~~~~\forall \xi\in\mathbb{S}^n.
\end{equation}
We have

%Then \eqref{HLS-ineq-1-sph} is equivalent to
%\begin{equation}\label{HLS-ineq-1-sph-1}
% ||\tilde I_\alpha F||_{L^{\frac{2n}{n-\alpha}}} \ge  N^*(n,\alpha)||F||_{L^{\frac{2n}{n+\alpha}}}
%\end{equation}
%for all nonnegative $F(\xi)\in L^{\frac{2n}{n+\alpha}}(\mathbb{S}^n).$

 %It turns out, for $p=2n/(n+\alpha)$, under a mild condition, we are able to establish a sharp reversed HLS inequality on $\mathbb{S}^n,$ and to compute the best constant.

%Our sharp reversed HLS inequality on sphere can be stated as

 \begin{thm}\label{ext-HLS-exis} Let  $1\le n <\alpha$.  For all nonnegative $F\in L^{2n/(n+\alpha)}(\mathbb{S}^n)$,
\begin{equation}\label{HLS-ineq-2}
||\tilde I_\alpha F||_{L^{\frac{2n}{n-\alpha}}(\mathbb{S}^n)} \ge N^{*}(n, \alpha)||F||_{L^{\frac{2n}{n+\alpha}}(\mathbb{S}^n)},
\end{equation}
where the best constant
\begin{equation}\label{extreCon}
N^{*}(n,\alpha)=\pi^{(n-\alpha) /2} \frac{\Gamma(\alpha/2)}{\Gamma(n/2+\alpha/2)} \{\frac{\Gamma(n/2)}{\Gamma(n)} \}^{-\alpha/n};
\end{equation}
And equality holds if and only if
\begin{equation*}\label{extre}
F(\xi)=a(1-\xi\cdot \eta)^{-\frac{n+\alpha} 2}
\end{equation*}
for some $a>0$ and $\eta\in \mathbb{R}^{n+1}$ with $|\eta|<1$.
\end{thm}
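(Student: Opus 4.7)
The plan is to obtain Theorem \ref{ext-HLS-exis} in three stages: pass from the Euclidean inequality of Theorem \ref{ext-HLS} to the spherical inequality through stereographic conformal equivalence, classify all extremals via the Euler-Lagrange equation and the method of moving spheres, and read off the sharp constant from the simplest extremal.

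First, the stereographic projection $\pi\colon\mathbb{R}^n\to\mathbb{S}^n\setminus\{N\}$ satisfies $|\pi(x)-\pi(y)|^2 = 4|x-y|^2/((1+|x|^2)(1+|y|^2))$ and $dS=(2/(1+|x|^2))^n\,dx$. With the weighted lift $F(\pi(x)) = ((1+|x|^2)/2)^{(n+\alpha)/2}f(x)$, a direct computation shows that the Euclidean bilinear inequality \eqref{HLS-ineq-1} at the conformal exponent $p=t=2n/(n+\alpha)$ is identical to the spherical bilinear inequality \eqref{HLS-ineq-1-sph} with the same sharp constant, and \eqref{HLS-ineq-2} is its reverse-H\"older operator reformulation (the exponents $2n/(n+\alpha)$ and $2n/(n-\alpha)$ being H\"older conjugate in the negative-exponent sense, $1/p+1/q=1$). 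Existence of a nonnegative extremal $F$ on $\mathbb{S}^n$ therefore follows directly from Theorem \ref{ext-HLS}, and after normalization such an $F$ satisfies the Euler-Lagrange equation
\begin{equation*}
\tilde I_\alpha F(\xi) = N^{*}(n,\alpha)\, F(\xi)^{p-1},\qquad p-1 = \frac{n-\alpha}{n+\alpha}<0,
\end{equation*}
which, pulled back through $\pi$, becomes a conformally covariant integral equation on $\mathbb{R}^n$ of the form $\int_{\mathbb{R}^n}|x-y|^{\alpha-n}f(y)\,dy = c\,f(x)^{p-1}$.

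The heart of the argument, and the main technical obstacle I expect, is the classification of positive $L^p$ solutions of this integral equation by the method of moving spheres. For each base point $x_0\in\mathbb{R}^n$ and radius $\lambda>0$, one compares $f$ with its Kelvin transform about $\partial B_\lambda(x_0)$; the kernel $|x-y|^{\alpha-n}$ carries exactly the conformal weight compatible with scaling exponent $(n+\alpha)/2$, so the equation is invariant under this transformation. Two features distinguish the argument from the classical Chen-Li-Ou setting: (i) because $p-1<0$, the direction of the pointwise comparison inequalities driving the iteration is reversed, so every monotonicity step must be run with the opposite sign; and (ii) the kernel $|x-y|^{\alpha-n}$ now \emph{grows} with $|x-y|$, so the $L^p$-decay of $f$ has to be invoked in a delicate way to control far-field contributions. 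Once the reversed narrow-region start is in place, the usual dichotomy yields, for each $x_0$, either a sphere that can be slid out indefinitely (excluded by $f\in L^p$) or a locked critical radius; letting $x_0$ vary then forces $f(x) = a(\epsilon^2+|x-x_1|^2)^{-(n+\alpha)/2}$, which on the sphere reads $F(\xi) = a(1-\xi\cdot\eta)^{-(n+\alpha)/2}$ with $|\eta|<1$.

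Finally, by the conformal invariance just established, every member of this family realizes the same ratio in \eqref{HLS-ineq-2}, so it suffices to test on the constant extremal $F\equiv 1$ (the case $\eta=0$). Then $\|F\|_{L^{2n/(n+\alpha)}(\mathbb{S}^n)} = |\mathbb{S}^n|^{(n+\alpha)/(2n)}$, while rotational invariance together with the substitution $|\xi-\eta|=2\sin(\theta/2)$ reduces the chordal integral to a one-dimensional Beta integral, giving
\begin{equation*}
\tilde I_\alpha 1 = 2^{\alpha}\pi^{n/2}\,\frac{\Gamma(\alpha/2)}{\Gamma((n+\alpha)/2)}.
\end{equation*}
Combining with $|\mathbb{S}^n| = 2^n\pi^{n/2}\Gamma(n/2)/\Gamma(n)$ (Legendre duplication) and the exponent identity $1/q-1/p=-\alpha/n$ where $q=2n/(n-\alpha)$ yields the sharp value \eqref{extreCon}, and the equality case in \eqref{HLS-ineq-2} is characterized by exactly the stated family.
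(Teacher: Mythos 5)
Your overall architecture (stereographic transfer, existence of a minimizer, Euler--Lagrange equation, moving spheres, evaluation at $F\equiv 1$) matches the paper's, and the final constant computation is correct. However, there are two substantive gaps, the first of which is serious.

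\textbf{Existence of an extremal does not ``follow directly from Theorem \ref{ext-HLS}.''} Theorem \ref{ext-HLS} only supplies the inequality with \emph{some} positive constant; it says nothing about whether the infimum
$N^*(n,\alpha)=\inf\{\|\tilde I_\alpha F\|_{L^q}:F\ge0,\|F\|_{L^p}=1\}$
is attained. In fact this is the most delicate step of the paper, and the paper explicitly flags why: for $\alpha>n$ the classic concentration-compactness route breaks down, and (Remark \ref{pointwise}) a minimizing sequence $F_j$ can converge pointwise to $F_\circ$ while $\tilde I_\alpha F_j$ fails to converge pointwise to $\tilde I_\alpha F_\circ$. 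The paper's existence proof requires: a density lemma (Lemma \ref{lem3.1}) to pass from $L^1$ to continuous functions (and a monotone-convergence step to pass from $L^1$ to $L^p$), rearrangement with the reversed Riesz/Brascamp--Lieb inequality, a quantitative modification in the spirit of Lieb's Lemma 2.4 (Lemma \ref{Lieb-lm-1}) to rule out vanishing, and then a compactness argument that uses the a priori lower bound $\tilde I_\alpha F_j\ge C$, dominated convergence, Fatou, and the reversed HLS inequality itself to extract a strongly convergent subsequence. None of this is present or even acknowledged in your sketch; asserting that existence is automatic is a genuine hole.

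\textbf{Your Euler--Lagrange equation is not the one the paper derives, and the reduction you implicitly assume is precisely what the paper says it cannot prove.} Minimizing $\|\tilde I_\alpha F\|_{L^q}$ subject to $\|F\|_{L^p}=1$ yields
$F_\circ^{\,p-1}(\xi)=c\int_{\mathbb{S}^n}|\xi-\eta|^{\alpha-n}\big(\tilde I_\alpha F_\circ(\eta)\big)^{q-1}d\eta$,
equivalently the integral \emph{system} \eqref{sys-1} for $(u,v)=(f^{p-1},I_\alpha f)$. Your single equation $\tilde I_\alpha F=c\,F^{\,p-1}$ would follow if one knew $u$ and $v$ were proportional (or, equivalently, if the bilinear-form infimum were known to be attained on the diagonal $F=G$). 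The paper explicitly remarks after Theorem \ref{sys-critical} that this proportionality is \emph{not} obvious in the reversed setting, which is exactly why they classify solutions of the full system rather than invoking Li's result for the single equation. If you want to run your single-equation version, you must supply an argument that the bilinear infimum is attained at $F=G$ in the reversed, negative-exponent regime; otherwise you should set up and classify the system. Your description of the moving-sphere mechanics (reversed comparisons, growing kernel) is in the right spirit, but it is aimed at the wrong equation.
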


%Note that $L^{2n/(n+\alpha)}(\mathbb{S}^n) \subset L^1(\mathbb{S}^n)$, thus $N_1^{*}(n,\alpha) \ge N^{*}(n,\alpha).$ It is not clear  whether \eqref{HLS-ineq-2} holds for all $F(\xi)\in L^{2n/(n+\alpha)}(\mathbb{S}^n)$ or not. Namely, it is an open question to verify or disprove that $N_1^{*}(n,\alpha) = N^{*}(n,\alpha).$ {\bf question is solved after the AMS-Colorado meeting. See C. Li there.

For $\alpha \in (0,\infty),$ define the classic singular integral operator on $\mathbb{R}^n$  by
\begin{equation} \label{sig-ope-3}
I_\alpha f(x)=\int_{\mathbb{R}^n}\frac{f(y)}{|x-y|^{n-\alpha}}dy, \quad ~~~~~~~~~~\forall x\in\mathbb{R}^n.
\end{equation}
From Theorem \ref{ext-HLS-exis} and a stereographic projection, we have the sharp reversed HLS inequality on $\mathbb{R}^n$ for $p=t=2n/(n+\alpha)$.

\begin{cor}\label{cor3.1}Let $1\le n <\alpha.$  For all nonnegative function $f\in L^{2n/(n+\alpha)}(\mathbb{R}^n)$,
\begin{equation}\label{HLS-ineq-2-mj-8-5}
||I_\alpha f||_{L^{\frac{2n}{n-\alpha}}(\mathbb{R}^n)} \ge N^{*}(n, \alpha)||f||_{L^{\frac{2n}{n+\alpha}}(\mathbb{R}^n)},
\end{equation}
where $N^*(n, \alpha)$ is given by \eqref{extreCon};
And the equality holds if and only if
\begin{equation*}
 f(x)=c\big(\frac{1}{|x-x_0|^2+d^2}\big)^\frac{n+\alpha}2
\end{equation*}
for some $c,\ d>0,$ and $x_0\in \mathbb{R}^n.$
\end{cor}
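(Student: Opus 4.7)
The plan is to deduce the corollary from Theorem \ref{ext-HLS-exis} by transferring the inequality via the stereographic projection $S : \mathbb{R}^n \to \mathbb{S}^n \setminus \{N\}$ from the north pole. Two classical identities drive everything: the chordal distance formula $|S(x) - S(y)| = 2|x-y|/\sqrt{(1+|x|^2)(1+|y|^2)}$ and the Jacobian $dS_\xi = (2/(1+|x|^2))^n\, dx$. Conformal invariance at the exponents $p = 2n/(n+\alpha)$ and $q = 2n/(n-\alpha)$ will make every extraneous factor cancel.

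First I would associate to a nonnegative $f \in L^p(\mathbb{R}^n)$ the function $F$ on $\mathbb{S}^n$ determined by $F(S(x)) = 2^{-(n+\alpha)/2}(1+|x|^2)^{(n+\alpha)/2}\, f(x)$; this weight is chosen precisely so that $\|F\|_{L^p(\mathbb{S}^n)} = \|f\|_{L^p(\mathbb{R}^n)}$ by a single change of variables. Next I would compute $\tilde I_\alpha F(S(x'))$ by unfolding the sphere integral to $\mathbb{R}^n$ via $S$: the power $\alpha - n$ of the chordal distance contributes $(1+|x'|^2)^{(n-\alpha)/2}(1+|y|^2)^{(n-\alpha)/2}$, while the Jacobian and the weight in $F$ together contribute $(1+|y|^2)^{(n+\alpha)/2 - n}$, so all $(1+|y|^2)$-factors collapse to a zeroth power and the stray constants collect to $2^{(\alpha-n)/2}$. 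The result is the pointwise identity $\tilde I_\alpha F(S(x')) = 2^{(\alpha-n)/2}(1+|x'|^2)^{(n-\alpha)/2}\, I_\alpha f(x')$.

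A parallel bookkeeping on the $L^q$ side (using $(\int h^q)^{1/q}$ for the positive function $I_\alpha f$, even though $q<0$) then gives $\|\tilde I_\alpha F\|_{L^q(\mathbb{S}^n)} = \|I_\alpha f\|_{L^q(\mathbb{R}^n)}$, because $q(n-\alpha)/2 = n$ matches the Jacobian exponent and the leftover powers of $2$ recombine to $1$. Applying Theorem \ref{ext-HLS-exis} to $F$ then yields \eqref{HLS-ineq-2-mj-8-5} with the same sharp constant $N^*(n,\alpha)$. For the characterization of equality, I would pull back the sphere family $F(\xi) = a(1-\xi\cdot\eta)^{-(n+\alpha)/2}$ with $|\eta|<1$: writing $\eta = (\eta', \eta_{n+1})$ and completing the square gives $1 - S(x)\cdot\eta = \frac{1-\eta_{n+1}}{1+|x|^2}\bigl(|x-x_0|^2 + d^2\bigr)$ with $x_0 = \eta'/(1-\eta_{n+1})$ and $d^2 = (1-|\eta|^2)/(1-\eta_{n+1})^2 > 0$. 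Substituting into the defining relation between $F$ and $f$ causes the $(1+|x|^2)^{(n+\alpha)/2}$ factors to cancel exactly, producing the advertised family $f(x) = c(|x-x_0|^2 + d^2)^{-(n+\alpha)/2}$; conversely every such $f$ arises this way (the parameter counts match, both spaces being $(n+2)$-dimensional).

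The main obstacle is bookkeeping rather than conceptual: one must verify that the many powers of $2$ and of $1 + |x|^2$ truly cancel through two separate change-of-variables computations, and that the negative exponent $q$ causes no trouble because $I_\alpha f$ is everywhere positive. The cancellation is not accidental but forced by conformal invariance at the critical exponents, so once the constants are checked the corollary follows from Theorem \ref{ext-HLS-exis} with no further analytic input.
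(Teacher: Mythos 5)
Your proposal is correct and follows essentially the same route the paper takes: the corollary is derived from Theorem \ref{ext-HLS-exis} by pulling the inequality back through the stereographic projection, using the change-of-variable identities $\|F\|_{L^p(\mathbb{S}^n)}=\|f\|_{L^p(\mathbb{R}^n)}$ and $\|\tilde I_\alpha F\|_{L^q(\mathbb{S}^n)}=\|I_\alpha f\|_{L^q(\mathbb{R}^n)}$ that the paper records in \eqref{mj7-22-1}, together with the pullback of the extremal family $F(\xi)=a(1-\xi\cdot\eta)^{-(n+\alpha)/2}$. One small slip in your equality-case bookkeeping: with the paper's stereographic map ($\xi^{n+1}=(1-|x|^2)/(1+|x|^2)$) the factorization reads $1-\mathcal{S}(x)\cdot\eta = \frac{1+\eta_{n+1}}{1+|x|^2}\bigl(|x-x_0|^2+d^2\bigr)$ with $x_0=\eta'/(1+\eta_{n+1})$ and $d^2=(1-|\eta|^2)/(1+\eta_{n+1})^2$, i.e.\ the sign is $1+\eta_{n+1}$ rather than $1-\eta_{n+1}$; this does not affect the conclusion.
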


%The inequality directly follows from Theorem \ref{ext-HLS-exis} and a stereographic projection. However, it is an open question: does \eqref{HLS-ineq-2-mj-8-5} hold for all $f\in L^{2n/(n+\alpha)}(\mathbb{R}^n)$?
%
%
%As a corollary, we have a sharp reversed HLS on $\mathbb{R}^n$ for all continuous functions $f(x)\in C(\mathbb{R}^n)$ with certain decay rate at infinity, see Corollary \ref{cor3.1}. Again, the sharp inequality for a large set (say, e.g. $f(x)\in L^{2n/(n+\alpha)}(\mathbb{R}^n)$) is unknown.

%{\bf sharp inequality on $R^n$ as a corollary, at least for $f\in L^p$ and $suppf \subset B_R(0)$ for some large $R$. maybe point out the relation to convex geometry, the chord integral? 7-27-2012}

%In particular, we do not know whether $N^{*}(n, \alpha)$ is strictly bigger than $N^{**}(n, \alpha, 2n/(n+\alpha))$ or not.

%For $p= 2n/(n+\alpha)$, inequality on $S^n$ is much clean and sharp later {\bf 7-18-2012}

%Following Beckner's approach \cite{B1993}, we can derive the  following sharp Sobolev inequalities on $\mathbb{S}^n$.
%
%{\bf will modify the follows later 3-27-2012}
%\begin{thm}\label{Sobolev} Let $n\ge1, p<0$. If $d \xi$ is the normalized surface measure of the standard sphere $\mathbb{S}^n$ such that $\int_{\mathbb{S}^n} d \xi=1$, then
%
%(1) for all $f(\xi)\in W^{1,p}(\mathbb{S}^n)$
%\begin{equation}\label{1-5}
%(\int_{\mathbb{S}^n}|f(\xi)|^p d\xi)^{2/p}\ge \frac{p-2}n  \int_{\mathbb{S}^n}|\nabla f(\xi)|^2 d\xi +\int_{\mathbb{S}^n}|f(\xi)|^2 d\xi.
%\end{equation}

\smallskip

We outline the strategy in proving above theorems.
 The proof of Theorem \ref{ext-HLS}  is along the line of
the proof for the classic HLS inequality (see, e.g. Stein \cite{Stein}). The main difference is that our inequality is reversed. The reversed H\"older inequality, converse Young's inequality, as well as a new established  Marcinkiewicz interpolation involving exponents less than $1$ (could be negative) are used. Our proof for the existence
of extremal functions is quite different to that for the sharp HLS
inequality (due to Lieb \cite{Lieb1983}), and we can only obtain the result for $p=t=2n/(n+\alpha).$  We first prove Theorem \ref{ext-HLS-exis}
for $F(\xi)\in L^1(\mathbb{S}^n$).  A density lemma (Lemma \ref{lem3.1})
will be established, which allows us to reduce the proof for all $L^1$ functions to
 continuous functions on $\mathbb{S}^n.$ The extra condition for
 functions (i.e. $F(\xi)\in L^1(\mathbb{S}^n$)) will be removed while considering
 its dual form (inequality \eqref{HLS-ineq-1-sph})\footnote{We thank C. Li, whose comment leads to
 the removal of the extra condition.}.
%See more details in Remark \ref{rem_lemma3.1}. Removed on 4-16-2013
In proving the existence of extremal functions, symmetrization argument is used.  We point out here that for $\alpha>n$, there is a new phenomenon  in proving the convergence of the minimizing sequence $\{F_i\}_{i=1}^\infty$: even $F_i$ has a concentration mass, the mass of $ \tilde I_\alpha F_i$  may not. In other words, the classic concentration compactness argument does not work. In fact, we  show in Remark \ref{pointwise} that even a minimizing sequence $\{F_i\}_{i=1}^\infty$ pointwise converges to $F$,  $\tilde I_\alpha F_j(\xi)$ may not converge to $\tilde I_\alpha F(\xi) $ pointwise.
It is one of the main difficulties to show that there is a subsequence of $\tilde I_\alpha F_j(\xi)$
that is a Cauchy sequence under certain metric.
%%{A crucial Lemma (Lemma ???) similar to Lemma 2.4 in Lieb \cite{Lieb1983} is needed
%so that we can rescale the minimizing sequence to make.
Another difficulty is to classify all extremal functions in order to compute the sharp constant. This is settled via the
method of moving sphere, introduced in Li and Zhu \cite{LZ1995}. Our research certainly answers one of Y.Y. Li's open questions in \cite{Li2004}, where he asks for the background for the study of the integral equation with negative exponents.

Quite natural question after we establish the reversed Hardy-Littlewood-Sobolev inequality is: Can we derive certain Sobolev type inequalities (such as those Sobolev inequalities with negative powers on $\mathbb{S}^1$ and on $\mathbb{S}^3$), and use these Sobolev inequalities to investigate curvature equations (for example, the prescribing $Q-$ curvature on $\mathbb{S}^3$)? It is not obvious that one can derive Sobolev inequalities from the reversed HLS inequality as in the case for HLS inequality. However, we are able to use the reversed HLS inequality directly to derive the existence of solutions to certain curvature equations, see Zhu \cite{Zhu14}. From the view point given in Zhu \cite{Zhu14}, it seems more natural to extend Lieb's  sharp HLS inequality on $\mathbb{S}^n$  to the ones on  general compact Riemannian manifolds, and use them to investigate curvature equations (including a generalized Yamabe problem formulated in \cite{Zhu14}. More details will be given in a forthcoming paper \cite{HZ2014}.

The paper is organized as follows. Theorem \ref{ext-HLS} is proved in Section 2, where a new  Marcinkiewicz interpolation theorem is also stated and proved; Theorem \ref{ext-HLS-exis} is proved in Section 3, where a Liouville theorem (Theorem \ref{sys-critical}) concerning an integral system is also proved.

%And the sharp constant is computed in Section 3. From the argument based on moving sphere method, it is clear that the condition $p=2n/(n+\alpha)$ for classifying extremal functions is related to the conformal invariant property of the integral operator.

%It remains as an open problem to find the best constant for $p\ne 2n/(n+\alpha)$, even for the classic HLS.

\section{reversed Hardy-Littlewood-Sobolev inequality  \label{Section 2}}
In this section, we prove Theorem \ref{ext-HLS}: the reversed HLS inequality (with a rough constant) in $\mathbb{R}^n $ for $\lambda=n-\alpha<0.$

\subsection{\textbf{Some basic inequalities}}\label{subsection 2.1}
%\noindent{\it Notation}:
For $p<1$ and $p\ne 0$, if $f(x)$ satisfies $\int_{\mathbb{R}^n} |f|^p dx<\infty,$ we say $f(x) \in L^p(\mathbb{R}^n),$ and call $(\int_{\mathbb{R}^n} |f|^p)^{1/p} dx $ (denoted as $||f||_{L^p}$ later)  the $L^p$ norm of $f(x)$. The $L^p$ norm for $p<1$  is not a norm for a vector space. Nevertheless, certain integral inequalities still hold.

%It needs to point out that $L^p(\mathbb{R}^n)$ for $p<1$ may not be a vector space and

\medskip

 \noindent{\bf Lemma 2.1 (Reversed H\"{o}lder inequality)}. For $p\in (0, 1)$, $p'=p/(p-1)$,  and nonnegative functions $f\in L^p(\mathbb{R}^n)$ and $g\in L^{p'}(\mathbb{R}^n),$
\[\int_{\mathbb{R}^n}f(x)g(x)dx\ge\|f\|_{L^p }\|g\|_{L^{p'} }.
\]
%Note that $p'<0 $ in  the case  $0<p<1$, so we assume $|g|>0$ a.e..

The reversed H\"{o}lder inequality can be derived easily from the standard H\"{o}lder inequality.

\smallskip

\noindent{\bf Lemma 2.2 (Converse Young's inequality)}. Suppose that  $0<p<1,$ and $q, \, r<0$ are three parameters satisfying $\frac1p+\frac1q=1+\frac1r.$
For any  nonnegative measurable  functions $h,g$, define
% $h\in L^p(\partial\mathbb{R}^n),g\in L^q(\mathbb{R}^n),$
\[ g*h(x)=\int_{\mathbb{R}^n}g(x-y)h(y) dy.
 \]Then
 \begin{equation*}\label{Young-1}
 \|g*h\|_{L^r}\ge \|g\|_{L^q }\|h\|_{L^p }.
 \end{equation*}

 \smallskip

 The proof of the above converse Young's inequality can be found, e.g. in Brascamp and Lieb \cite{BL1976}, where they also identified the best constant for the classic Young's inequality.

 %It is interesting and important to point out that Brascamp and Lieb's inequality is
 % one of the closest inequalities to inequality \eqref{HLS-ineq-1}. keep for myself 4-29-2012

\smallskip

\noindent{\bf Lemma 2.3 (Reversed Minkowski inequality)}. %\label{Mink ineq}
 If $q<0$, then for any  nonnegative measurable  functions $F(x,y)$,
 \begin{equation*}\label{Mink-1}
 \big[\int_Y\big(\int_X F(x,y)d\mu(x)\big)^qd\nu(y)\big]^\frac1q\ge\int_X\big(\int_Y [F(x,y)]^qd\nu(y)\big)^\frac1qd\mu(x)
 \end{equation*}
%\end{lm}

The proof for the reversed Minkowski inequality can be found in \cite{HLP1954} (on $P_{148}$).

\medskip

To establish the reversed HLS inequality, we also need to extend the classic Marcinkiewicz interpolation theorem for $L^p$ function with $p<1$.

% In this section, we present some essential preliminary knowledge and extend the  Marcinkiewicz interpolation theorem with $0<p<1.$
%
% {\bf meaning of $L^p$ for $p<1$, reverse Holder, converse Young inequality}
%
%We firstly recall the reversed :
%Let $X$ be a set of $\mathbb{R}^n $ and
%
%Let $X$ be a set of $\mathbb{R}^n $.

Recall: for a given measurable function $f(x)$ on $\mathbb{R}^n$ and $0<
p<\infty$, the weak $L^p$ norm of $f(x)$  is defined by
\[\|f\|_{L^{p}_W}=\inf \{A>0 \,:\, meas \{ |f(x)|>t\}\cdot t^p\le A^p\},
\]
% and $L^{p}_W=\{f\,|\, f ~\text{is a measurable function, and}~\|f\|_{L^{p}_W}<\infty \}$.
For $p<0$, we define
the weak $L^{p}$ norm for $f(x)$ in a similar way:
$$||f||_{L^{p}_W}:= \sup \{ A>0  : \, m\{|f(x)|<t\} \cdot t^{p} \le A^{p}\}.
$$
Thus, for $p<0$,
$$||f||_{L^{p}_W}^{p}:= \inf \{ B>0  : \, m\{|f(x)|<t\} \cdot t^{p} \le B\}.
$$

\smallskip

%It is well known the Minkowski inequality for $0<p<1$, (see,e.g. \cite{HLP1954})
%\begin{equation*}
%(\int_{\mathbb{R}^n}(f(x)+g(x))^p dx )^\frac 1p \geq (\int_{\mathbb{R}^n} |f(x)|^p dx )^\frac 1p
%+(\int_{\mathbb{R}^n} |g(x)|^p dx )^\frac 1p,
%\end{equation*}
%\begin{equation*}
%(\int_A(\int_B h(x,y)dy )^p dx)^\frac 1p \geq \int_B(\int_A
%|h(x,y)|^p dx)^\frac 1p dx.
%\end{equation*}

Let $T\,:\,L^p(\mathbb{R}^n)\to L^q(\mathbb{R}^n)$ be a linear operator.
We recall that for $0< p, \, q<\infty$, operator $T$ is called the weak type $(p,q)$ if  there exists a constant $C(p,q)>0$ such that for all $f\in L^p(\mathbb{R}^n)$
\begin{equation*}\label{weak-1}
meas\{x:|Tf(x)|>\tau\}\le\big(C(p,q)\frac{\|f\|_{L^p}}\tau\big)^q, ~~~~~~ ~\forall~ \tau>0.
\end{equation*}
Similarly, we can extend the definition of the weak type $(p,q)$ to the case $q<0<p<1$.

\begin{df}\label{pqtype}
For $q<0<p<1$, we say operator $T$ is of the weak type $(p,q)$, if  there exists a constant $C(p,q)>0,$ such that for all $f\in L^p(\mathbb{R}^n),$
\begin{equation*}\label{weak-2}
meas\{x:|T f(x)|<\tau\}\le\big(C(p,q)\frac{\|f\|_{L^p}}\tau\big)^q, ~~~~~~~~ ~\forall~ \tau>0.
\end{equation*}
\end{df}

We now can state the extension to the classic Marcinkiewicz interpolation theorem.

%{\bf I am here 7-26-2012}

\begin{prop}\label{Mar-1}
Let $T$ %\,:\,(X,\mu)\to(Y,\nu)$
be a linear operator which maps any nonnegative function to a nonnegative function. For a pair of  numbers $(p_1,q_1),( p_2,q_2)$ satisfying $q_i<0<p_i<1,\,i= 1,2,$  $p_1< p_2$
and $q_1< q_2$,
if T is weak type $(p_1,q_1) $ and ~$(p_2,q_2) $ for all nonnegative functions, then for any  $\theta\in (0, 1)$,  and
\begin{eqnarray}\label{cond-pq}
\frac 1p&=&\frac{1-\theta}{p_1}+\frac \theta {p_2},\quad\quad
\frac 1q=\frac{1-\theta}{q_1}+\frac \theta q_2,
\end{eqnarray}
then $T$ is reversed strong type ~$(p,q)$ for all nonnegative functions, that is,
 \begin{equation}\label{Mar-ineq}
 \|Tf\|_{L^q}\geq C\|f\|_{L^p}, \quad \forall f\in {L^p}(\mathbb{R}^n) \quad \mbox{and} \quad f\ge 0,
\end{equation}
for some constant $C=C(p_1,p_2,q_1,q_2,\theta)>0.$
 \end{prop}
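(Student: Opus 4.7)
My plan follows the structure of the classical Marcinkiewicz proof, adapted to account for the signs $q_i<0<p_i<1$. The first step is to recast $\|Tf\|_{L^q}$ via a layer-cake identity valid in the negative regime: after the substitution $s=\tau^q$ in $\int(Tf)^q\,dx=\int_0^\infty m\{(Tf)^q>s\}\,ds$, one obtains
\[
\|Tf\|_{L^q}^q=|q|\int_0^\infty m\{Tf<\tau\}\,\tau^{q-1}\,d\tau.
\]
Since $q<0$, the $1/q$-th power reverses inequalities, so the goal \eqref{Mar-ineq} is equivalent to an \emph{upper} bound $\|Tf\|_{L^q}^q\leq C'\|f\|_{L^p}^q$.

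Second, for each $\tau>0$ I decompose $f=f_1^{(\tau)}+f_2^{(\tau)}$ by a level-set truncation, $f_1^{(\tau)}=f\chi_{\{f>\sigma(\tau)\}}$ and $f_2^{(\tau)}=f\chi_{\{f\leq\sigma(\tau)\}}$, with $\sigma(\tau)$ proportional to $\tau$. Since $T$ is linear and positivity-preserving, $Tf=Tf_1^{(\tau)}+Tf_2^{(\tau)}\geq Tf_i^{(\tau)}$, which inverts the usual Marcinkiewicz inclusion:
\[
\{Tf<\tau\}\subseteq\{Tf_1^{(\tau)}<\tau\}\cap\{Tf_2^{(\tau)}<\tau\}.
\]
Applying the weak-type hypotheses gives $m\{Tf<\tau\}\leq(C_i\|f_i^{(\tau)}\|_{L^{p_i}}/\tau)^{q_i}$ for $i=1,2$. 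For the truncated norms I use the elementary estimates $\|f_1^{(\tau)}\|_{L^{p_1}}\leq\sigma(\tau)^{1-p/p_1}\|f\|_{L^p}^{p/p_1}$ (valid since $p_1<p$) and $\|f_2^{(\tau)}\|_{L^{p_2}}\leq\sigma(\tau)^{1-p/p_2}\|f\|_{L^p}^{p/p_2}$ (valid since $p<p_2$).

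Third, I split the $\tau$-integral at a breakpoint $\tau_0$ that scales linearly with $\|f\|_{L^p}$, use the $(p_1,q_1)$ estimate on $(0,\tau_0]$ and the $(p_2,q_2)$ estimate on $[\tau_0,\infty)$. Integrability at the endpoints reduces to $q>q_1p/p_1$ (for the piece near $0$) and $q<q_2p/p_2$ (for the piece near $\infty$); both are automatic since $p/p_1>1$, $p/p_2<1$, and $q_1<q<q_2<0$ force $q_1p/p_1<q_1<q<q_2<q_2p/p_2$. Computing the two contributions and combining $\tau_0^{\,q-q_i p/p_i}$ with the truncation factors produces, by design, an expression homogeneous of degree $q$ in $\|f\|_{L^p}$, yielding a constant $C=C(p_1,p_2,q_1,q_2,\theta)>0$ as required.

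The chief obstacle is the sign reversal: because $q_i<0$, the map $x\mapsto x^{q_i}$ is decreasing, so a naive upper bound on $\|f_i^{(\tau)}\|_{L^{p_i}}$ produces a \emph{lower} rather than an upper bound on $(C_i\|f_i^{(\tau)}\|_{L^{p_i}}/\tau)^{q_i}$. One has to carry the actual $\tau$-dependence of $\|f_i^{(\tau)}\|_{L^{p_i}}$ through the integral and only collapse it at the correct moment, choosing $\sigma(\tau)$ so that the truncation bound is essentially saturated. The interpolation identities in \eqref{cond-pq} are exactly what is needed to make the two endpoint exponents match and yield the final bound $\|Tf\|_{L^q}^q\leq C^q\|f\|_{L^p}^q$; once the signs and exponents line up, the remaining computations are routine.
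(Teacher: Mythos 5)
Your overall framework matches the paper's in outline (layer-cake identity for $q<0$, level-set truncation of $f$, inclusion of level sets of $Tf$, and a homogeneity calculation), and you correctly note that \eqref{cond-pq} is what makes the exponents match. But there is a genuine gap at the heart of the argument, which is precisely the ``chief obstacle'' you identify and then do not actually resolve. You write down $m\{Tf<\tau\}\le\bigl(C_i\|f_i^{(\tau)}\|_{L^{p_i}}/\tau\bigr)^{q_i}$ and then substitute the \emph{upper} bounds $\|f_i^{(\tau)}\|_{L^{p_i}}\le\sigma(\tau)^{1-p/p_i}\|f\|_{L^p}^{p/p_i}$ into these expressions. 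Because $q_i<0$, that substitution makes $(\cdots)^{q_i}$ \emph{larger}, not smaller, so you obtain a lower bound on $m\{Tf<\tau\}$ where an upper bound is required. What you would need is a lower bound on $\|f_i^{(\tau)}\|_{L^{p_i}}$ in terms of $\|f\|_{L^p}$, and no such bound exists for general $f$ — the truncated piece can have arbitrarily small $L^{p_i}$-norm. ``Carry the $\tau$-dependence through the integral and only collapse it at the correct moment, choosing $\sigma(\tau)$ so that the truncation bound is essentially saturated'' is not a mechanism; the truncation estimates are one-sided and cannot be ``saturated'' for arbitrary $f$.

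The paper's way around this is to \emph{not} bound $\|f_i^{(\tau)}\|_{L^{p_i}}$ pointwise in $\tau$ at all. Instead it keeps the quantity $V_1=\int_0^\infty t^{q-q_1-1}\|f_1\|_{L^{p_1}}^{q_1}\,dt$ intact, raises it to the power $1/k_1$ with $k_1=q_1/p_1<0$, and applies the reversed Minkowski inequality (Lemma~2.3 of the paper) to exchange the $t$- and $x$-integrations. After the exchange, the inner $t$-integral can be evaluated exactly over the interval where the truncation keeps $f$, producing $\int_{\mathbb{R}^n}|f|^{\,p_1+(q-q_1)/(\sigma k_1)}\,dx$; the exponent $\sigma$ of the truncation level $\gamma=(t/A)^\sigma$ is then chosen so that $p_1+(q-q_1)/(\sigma k_1)=p$, and \eqref{cond-pq} guarantees this same $\sigma$ also works for $V_2$. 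Note that this forces $\sigma$ to be a specific negative power, not $\sigma(\tau)\propto\tau$ as you propose; a linear truncation does not produce the correct exponent after the order exchange unless $q_1/p_1=(q-q_1)/(p-p_1)$. Without the reversed Minkowski step (or some equivalent device for exchanging integrals with a negative outer exponent), the proof does not close. You should add Lemma~2.3 as a tool, keep $\|f_i^{(\tau)}\|_{L^{p_i}}$ as a function of $\tau$ inside the integral, and perform the Minkowski exchange before evaluating.
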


\noindent \textbf{Proof.} For any measurable function $f(x)$, denote
$\tilde m_g(\tau)= meas \{x \,:\, |g(x)|<\tau\}.$
%throughout this section.
%$m_g(\lambda)= meas \{x \,:\, |g(x)|>\lambda\}, ~~~~~~~~~~~~ $

%To prove this theorem, we need the following lemmas. The following
%\begin{eqnarray*}
%m_f(\lambda)= meas \{x\,:\,| f(x)|>\lambda\},\\
%m_{T f}(\lambda)= meas \{x\,:\,|Tf(x)|<\lambda\}.
%\end{eqnarray*}
Easy to check that
%for $r> 0$, if $g\in L^r(\mathbb{R}^n)$, then
%\begin{equation}\label{m-1}
%m_g(\lambda)\leq\frac{\|g\|^r_{L^r}}{\lambda^r},\quad \forall~ \lambda>0,
%\end{equation}
%and
%\begin{equation}\label{m-2}
%\|g\|^r_{L^r(X)}=r\int^\infty_0 t^{r-1} m_g(t)dt;
%\end{equation}
%And
for $r< 0$, if $g\in L^r(\mathbb{R}^n)$, then
\begin{equation*}\label{m-3}
\tilde m_g(\tau)\leq\frac{\|g\|^r_{L^r}}{\tau^r},\quad \forall~ \tau>0,
\end{equation*}
and
\begin{equation}\label{m-4}
\|g\|^r_{L^r(X)}=|r|\int^\infty_0 t^{r-1} \tilde m_g(t)dt.
\end{equation}

For a nonnegative $f(x)\in L^{p_1}(\mathbb{R}^n)\cap L^{p_2}(\mathbb{R}^n)$ and $\gamma>0$, write $f=f_1+f_2$, where
\begin{equation*}
f_1(x)=
\begin{cases}f(x),\quad&  \text{if}~ f(x)\leq\gamma,\\
0, \quad&\text{if}~ f(x)>\gamma,
\end{cases}
\end{equation*}
and
\begin{equation*}
f_2(x)=\begin{cases}0,\quad& \text{if}~f(x)\leq\gamma,\\
f(x),\quad&\text{if}~f(x)>\gamma.
\end{cases}
\end{equation*}
Thus
\begin{equation*}
\{x\,:\, Tf<\tau\}\subset \{x\,:\, Tf_1<\frac{\tau}{2}\}\cup \{x: Tf_2<\frac{\tau}{2}\}.
\end{equation*}

%Without loss of generality, we only consider the case $q_2<q_1$. The case $q_2>q_1$ can be derived in the same way (by switching $f_1$ with $f_2$ in the following derivation).
%(mj, add on 8-05-2012}
%(MJ 6-14-2014: Assume $q_1<q_2$!

Since $T$ is weak type $(p_1,q_1)$ and $(p_2,q_2),$  we have
\begin{equation*}
\tilde m_{Tf_1}(\tau)\leq c_1(\frac{\|f_1\|_{L^{p_1}}}{\tau})^{q_1}, \  \ \ \
~~~~~~~~~ \mbox{and} ~~~~~~~~~~~~~ \ \ \ \
\tilde m_{Tf_2}(\tau)\leq c_2(\frac{\|f_2\|_{L^{p_2}}}{\tau})^{q_2}.
\end{equation*}
%Without loss of generality, we only consider the case $q_2<q_1$. {\bf The case $q_2>q_1$ can be derived in the same way. will check. I am here now 7-27-2012}
It then follows from  \eqref{m-4}  that
\begin{eqnarray}\label{m-7}
\|Tf\|^q_{L^q}&=&|q|\int_0^\infty t^{q-1} \tilde m_{Tf}(t) dt\nonumber\\
&\leq& |q|\int_0^\infty t^{q-1}[\tilde m_{Tf_1}\{{t/2}\}+ \tilde m_{Tf_2}\{{t/2}\}]dt\nonumber\\
&\leq& C|q|\int_0^\infty t^{q-q_1-1}\|f_1\|^{q_1}_{L^{p_1}}dt+C|q|\int_0^\infty t^{q-q_2-1}\|f_2\|^{q_2}_{L^{p_2}}dt \nonumber\\
%&\le& |q|p_1c_1\int_0^\infty t^{q-q_1-1}\big(\int_0^{\gamma}s^{p_1-1}m_f(s) ds\big)^\frac{q_1}{p_1} dt\nonumber\\
%& &+|q|p_2c_2\int_0^\infty t^{q-q_2-1}\big(\int_{\gamma}^\infty s^{p_2-1}m_f(s) ds\big)^\frac{q_2}{p_2}dt\nonumber\\
&=&C|q|V_1+C|q|V_2,
\end{eqnarray}
where
$$
V_1=   \int_0^\infty t^{q-q_1-1}\|f_1\|^{q_1}_{L^{p_1}}dt, \ \ \ ~~~~~~~~~ V_2 = \int_0^\infty t^{q-q_2-1}\|f_2\|^{q_2}_{L^{p_2}}dt.$$

From \eqref{cond-pq}, we know $$\frac{p_1}{q_1}\cdot\frac{q-q_1}{p-p_1}=\frac{p_2}{q_2}\cdot \frac{q-q_2}{p-p_2} < 0.$$
Choose
$
\sigma=\frac{p_1}{q_1}\cdot\frac{q-q_1}{p-p_1}=\frac{p_2}{q_2}\cdot \frac{q-q_2}{p-p_2},
$
and let  $k_1=\frac{q_1}{p_1}<0$ and $k_2=\frac{q_2}{p_2}<0.$
We have
\begin{equation*}
p_1+\frac{q-q_1}{\sigma k_1}=p_2+\frac{q-q_2}{\sigma k_2}=p.
\end{equation*}

%For $p_1< p_2,$
Let $\gamma=(\frac{t}{A})^\sigma$  with $A$ being a  constant to be  specified later.
From the reversed Minkowski inequality, we have
 %${\bf~(Note ~f_1=f~ if~|f|\le\gamma,~\Longrightarrow~f_1=f~if~A|f|^\frac1\sigma\le t,~ otherwise,f_1=0)~}$
\begin{eqnarray*}
V_1^\frac{1}{k_1} %&=& \big(\int_0^\infty t^{q-q_1-1}\|f_1\|^{q_1}_{L^{p_1}}dt\big)^\frac{1}{k_1}\\
&=& \big[\int_0^\infty\big(\int_{\mathbb{R}^n} t^{\frac{q-q_1-1}{k_1}} |f_1(x)|^{p_1}dx\big)^{k_1}dt\big]^\frac{1}{k_1}\\
&\ge&\int_{\mathbb{R}^n} \big(\int_0^{A|f(x)|^\frac1\sigma} t^{q-q_1-1} |f_1(x)|^{p_1k_1}dt\big)^\frac{1}{k_1}dx\\
%&=&\int_{\mathbb{R}^n} \big(\int_{A|f(x)|^\frac1\sigma}^\infty t^{q-q_1-1} |f(x)|^{p_1k_1}dt\big)^\frac{1}{k_1}dx\\ %\quad(q-q_1<0)\\
&=&\big(\frac{A^{q-q_1}}{q-q_1}\big)^\frac{1}{k_1}\int_{\mathbb{R}^n} |f(x)|^{p_1+\frac{q-q_1}{\sigma k_1}} dx.
\end{eqnarray*}
That is
\begin{equation}\label{m-8}
V_1\leq\frac{A^{q-q_1}}{q-q_1}\cdot \big(\int_{\mathbb{R}^n} |f(x)|^{p} dx\big)^{k_1}.
\end{equation}
For $V_2$, we have
\begin{eqnarray*}
V_2^\frac{1}{k_2}
%&=&\int_0^\infty t^{q-q_2-1}\|f_2\|^{q_2}_{L^{p_2}}dt\\
&=& \big[\int_0^\infty\big(\int_{\mathbb{R}^n}t^{\frac{q-q_2-1}{k_2}} |f_2(x)|^{p_2}dx\big)^{k_2}dt\big]^\frac{1}{k_2}\\
&\ge&\int_{\mathbb{R}^n}\big(\int_{A|f(x)|^\frac1\sigma}^\infty t^{q-q_2-1} |f_2(x)|^{p_2k_2}dt\big)^\frac{1}{k_2}dx\\
%&=&\int_{\mathbb{R}^n} \big(\int_0^{A|f(x)|^\frac1\sigma}  t^{q-q_2-1} |f(x)|^{p_2k_2}dt\big)^\frac{1}{k_2}dx\quad( \mbox{note} \quad q-q_2>0)\\
&=&\big(\frac{A^{q-q_2}}{q_2-q}\big)^\frac{1}{k_2}\int_{\mathbb{R}^n} |f(x)|^{p_2+\frac{q-q_2}{\sigma k_2}} d x.
\end{eqnarray*}
Thus,
\begin{equation}\label{m-9}
V_2\leq \frac{A^{q-q_2}}{q_2-q} \cdot \big(\int_{\mathbb{R}^n} |f(x)|^{p} dx\big)^{k_2}.
\end{equation}

Substituting \eqref{m-8} and \eqref{m-9} into \eqref{m-7}, we have
\begin{equation*}
\|Tf\|^q_{L^q}\leq C(V_1+V_2)
\leq C(A^{q-q_1} \|f\|^{pk_1}_{L^p}+ A^{q-q_2} \|f\|^{pk_2}_{L^p}).
\end{equation*}
Choosing suitable $A>0$ such that
\begin{equation*}
A^{q-q_1}\|f\|^{pk_1}_{L^p}=A^{q-q_2}\|f\|^{pk_2}_{L^p}. %\Rightarrow A=\|f\|^{\frac{p(k_2-k_1)}{q_2-q_1}}_{L^p},
\end{equation*}
We obtain
\begin{equation*}
\|Tf\|^q_{L^q}\leq C\|f\|^q_{L^p}.
\end{equation*}

\hfill$\Box$

\medskip

%-------------------------------------------------------------------------------------------------------------------------------------
\subsection{\textbf{The rough reversed HLS inequality}}\label{subsection 2.2}

%\section{\textbf{The extension of Hardy-Littlewood-Sobolev inequality} \label{Section 3}}
We are now ready to prove the reversed HLS  inequality with a rough constant (Theorem \ref{ext-HLS}).

%For $\alpha \in (0,\infty),$ define the classic singular integral operator on $\mathbb{R}^n$  by
%\begin{equation} \label{sig-ope-3}
%I_\alpha f(x)=\int_{\mathbb{R}^n}\frac{f(y)}{|x-y|^{n-\alpha}}dy, \quad ~~~~~~~~~~\forall x\in\mathbb{R}^n.
%\end{equation}
The reversed HLS  inequality with a rough constant can be derived from the following proposition.
\begin{prop}\label{HLS-equivalent} For any $1\le n<\alpha,$ $\frac n\alpha<p<1$ and $q$ given by
\begin{equation}\label{HlS-exp}
\frac1q=\frac1p-\frac\alpha n,
\end{equation} there exists a constant $C=C(n,\alpha,p)>0$, such that for all nonnegative $f\in L^p(\mathbb{R}^n)$,
%{\bf no need for nonnegativity. MJ 3-30-2012} Wrong!, 8-05-2012
\begin{equation}\label{HLS-ineq-2-1}
||I_\alpha f||_{L^q} \ge C||f||_{L^p}.
\end{equation}
%and the equality holds for certain extremal functions. Moreover, all extremal functions are radially symmetric (with respect to some points).
\end{prop}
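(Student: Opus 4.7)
The plan is to mimic Stein's classical derivation of the HLS inequality, but with every step reversed: use the reversed H\"older inequality (Lemma 2.1) in place of H\"older, control the sub-level sets $\{I_\alpha f<\tau\}$ from above rather than the super-level sets, and then invoke the Marcinkiewicz interpolation theorem for negative exponents (Proposition \ref{Mar-1}). I would first establish that $I_\alpha$ is of weak type $(p,q)$ in the sense of Definition \ref{pqtype} for each admissible pair, and then interpolate.

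For the weak type $(p,q)$ estimate, I would fix $p\in(n/\alpha,1)$ with $q<0$ given by \eqref{HlS-exp}, and discard the contribution from the inner ball,
\[
I_\alpha f(x)\geq\int_{|x-y|>R} f(y)|x-y|^{\alpha-n}\,dy,\qquad R>0.
\]
The reversed H\"older inequality applied on $B_R(x)^c$ with $p'=p/(p-1)<0$ is legitimate precisely when $p>n/\alpha$, since this is exactly what makes $|y|^{(\alpha-n)p'}$ integrable at infinity. A direct power-counting then yields $I_\alpha f(x)\geq C_1 R^{\alpha-n/p}\|f\chi_{B_R(x)^c}\|_{L^p}$. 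Consequently, if $I_\alpha f(x)<\tau$, picking $R$ of order $(\tau/\|f\|_{L^p})^{1/(\alpha-n/p)}$ forces $\|f\chi_{B_R(x)^c}\|_{L^p}<\|f\|_{L^p}/2$, and the identity $\|f\|_{L^p}^p=\|f\chi_{B_R(x)}\|_{L^p}^p+\|f\chi_{B_R(x)^c}\|_{L^p}^p$ (valid for disjoint supports and any $0<p<\infty$) upgrades this to $\int_{B_R(x)} f(y)^p\,dy>c\|f\|_{L^p}^p$ with absolute constant $c=1-2^{-p}>0$.

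The step I expect to be the main obstacle is bounding the measure of $A=\{x:\int_{B_R(x)} f^p>c\|f\|_{L^p}^p\}$, since no standard concentration tool is tailored to this setting. My proposal is a Fubini--Chebyshev argument:
\[
c\|f\|_{L^p}^p\cdot m(A)\leq\int_A\int_{B_R(x)}f(y)^p\,dy\,dx\leq\int f(y)^p|B_R|\,dy=|B_R|\|f\|_{L^p}^p,
\]
giving $m(A)\leq C_2 R^n$. Substituting the chosen $R$ and using the algebraic identity $n/(\alpha-n/p)=-q$, which is immediate from \eqref{HlS-exp}, I conclude $m\{I_\alpha f<\tau\}\leq C(\|f\|_{L^p}/\tau)^q$, i.e.\ the desired weak type $(p,q)$ bound.

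Finally, to pass from weak to strong type I would choose $p_1,p_2\in(n/\alpha,1)$ with $p_1<p<p_2$ and define $q_i$ by $1/q_i=1/p_i-\alpha/n$, so that $q_1<q_2<0$. Since \eqref{HlS-exp} is preserved under convex combinations in $1/p$, the unique $\theta\in(0,1)$ matching $1/p=(1-\theta)/p_1+\theta/p_2$ automatically satisfies $1/q=(1-\theta)/q_1+\theta/q_2$. Proposition \ref{Mar-1} then upgrades the two weak type bounds to the strong inequality \eqref{HLS-ineq-2-1}, completing the plan.
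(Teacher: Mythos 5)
Your argument is correct and proves the same weak-type $(p,q)$ estimate on which the paper also relies, but by a genuinely different route. The paper splits $I_\alpha f$ into the contributions from $|x-y|\le\rho$ and $|x-y|>\rho$, applies the converse Young inequality to \emph{each} piece to get lower bounds on $\|I^j_{\alpha,\rho}f\|_{L^{r_j}}$ for suitable negative $r_j$, converts those to sublevel-set estimates, and then optimizes the choice of $\rho$. You instead discard the inner ball entirely, apply the reversed H\"older inequality \emph{pointwise} on $B_R(x)^c$ to obtain $I_\alpha f(x)\geq C_1 R^{\alpha-n/p}\|f\chi_{B_R(x)^c}\|_{L^p}$, observe that $I_\alpha f(x)<\tau$ then forces a definite fraction of the $L^p$-mass of $f$ into $B_R(x)$, and close the sublevel-set bound with a Fubini--Chebyshev estimate $m(A)\le |B_R|/c$. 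This avoids the converse Young inequality altogether and replaces the two-piece optimization by a single, rather transparent mass-concentration argument; in exchange, the paper's version stays structurally closer to Stein's classical proof of the (direct) HLS weak-type bound and so makes the analogy more visible. Your interpolation step is also set up correctly: since \eqref{HlS-exp} is affine in $1/p$, choosing $p_1<p<p_2$ in $(n/\alpha,1)$ automatically produces $q_1<q_2<0$ satisfying \eqref{cond-pq}, so Proposition \ref{Mar-1} applies exactly as you say. The only point worth spelling out in a final write-up is the exact exponent count showing $\int_{|z|>R}|z|^{(\alpha-n)p'}dz<\infty$ precisely when $p>n/\alpha$ and that $(\alpha-n)+n/p'=\alpha-n/p$; you indicate both and they do check out.
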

% The equivalence  of  inequalities \eqref{HLS-ineq-1} and \eqref{HLS-ineq-2} is obvious. In fact,  i
 It follows from \eqref{HLS-ineq-2-1} and the reversed H\"older inequality that for nonnegative functions $f\in L^p$ and $g\in L^t$,
$$
<I_\alpha f, g> \ge C||f||_{L^p} \cdot ||g||_{L^t},$$
where $t=q'=\frac q{q-1}$ (thus $t\in(0,1));$ Which yields: for $\alpha>n$,
$$
|\int_{\mathbb{R}^n} \int_{\mathbb{R}^n} f(x)|x-y|^{\alpha-n} g(y) dx dy|\ge C||f||_{L^p}||g||_{L^t},
$$
where $t, \, p$ satisfy
$$
1-\frac 1t=\frac 1p-\frac \alpha n
\Leftrightarrow \frac 1 p+ \frac 1 t+\frac {n-\alpha} n=2.$$

\textbf{Proof of Proposition \ref{HLS-equivalent}.} For $1\le n<\alpha,$ $p\in(\frac n\alpha,1) $ and $q$ given by  (\ref{HlS-exp}), we first prove
\begin{equation}\label{HD-1-1}
\|I_\alpha f\|_{L^q_W}\ge C(n,\alpha,p)\|f\|_{L^p}
\end{equation}
for some constant $C(n,\alpha,p)$. That is,
we need  to show that there is a constant $C(n,\alpha, p)>0,$ such
that
\begin{equation}\label{HD-1}
meas\{x:|I_\alpha f(x)|<\tau\}\le\big(C(n,\alpha, p)\frac{\|f\|_{L^p}}\tau\big)^q, ~~~~~\forall f\in
L^p(\mathbb{R}^n) ~\mbox{and} ~ f(x)\ge 0, \ ~\forall~ \tau>0.
\end{equation}
Inequality (\ref{HD-1-1}) then implies \eqref{HLS-ineq-2-1} via the new Marcinkiewicz interpolation (Proposition \ref{Mar-1}).
% that
%\begin{equation*}\label{HD-1-2}
%\|I_\alpha f\|_{L^q}\ge C_1(n,\alpha,p)\|f\|_{L^p}.
%\end{equation*}
%or even slight stronger inequality
%$$%\begin{equation}\label{HD-1-3}
%\|I_\alpha f\|_{L^q( \mathbb{R}^n)}\ge C_2(n,\alpha,p)\|f\|_{L^{p,q}(
%\mathbb{R}^n)}
%$$%\end{equation}
% for $p\in (\frac {n}{\alpha},1)$ and $q$ being given by (\ref{HlS-exp}).

For any $\rho>0$, define
 \[I_{\alpha, \rho}^1 f(x)=\int_{|y-x|\le \rho}\frac{f(y)}{|x-y|^{n-\alpha}}dy,
  \]
  and
   \[I_{\alpha, \rho}^2 f(x)=\int_{|y-x|> \rho}\frac{f(y)}{|x-y|^{n-\alpha}}dy.
  \]
Note both $I_{\alpha, \rho}^1$ and $I_{\alpha, \rho}^2$ map nonnegative functions to nonnegative functions. Thus, for any $\tau>0$,
\begin{equation}\label{HD-2}
meas\{x: I_\alpha f(x)<2\tau\}\le meas\{x: I_{\alpha,\rho}^1 f (x)<\tau\}+meas\{x: I_{\alpha,\rho}^2f(x)<\tau\}.
\end{equation}
 We note that  it suffices to prove  inequality \eqref{HD-1} with $2\tau$ in place of $\tau$ in the left side of the
inequality, and we can further assume $\|f\|_{L^p}=1.$

Using the converse Young's inequality,  we have
\begin{eqnarray*}
\|I_{\alpha, \rho}^1 f \|_{L^{r_1}}&\ge&\big(\int_{\mathbb{R}^n}
\big(\frac{\chi_\rho(|x-y|)}{|x-y|^{n-\alpha}}\big)^{t_1}dy\big)^{\frac{1}{t_1}}\|f\|_{L^p}\\
&=:&D_1.
\end{eqnarray*}
where $\frac1p+\frac1{t_1}=1+\frac1{r_1}$ with $t_1\in (\frac{n}{n-\alpha}, 0) ,r_1<0,$ $\chi_\rho(x)=1$ for $|x|\le \rho$ and $\chi_\rho(x)=0$ for $|x|>\rho$, and
\begin{eqnarray*}
D_1&=&\big(\int_{B_\rho(x)}
 \frac{1}{|x-y|^{(n-\alpha){t_1}}} dy\big)^{\frac{1}{t_1}}=C_1(n,\alpha)\rho^\frac{ n-(n-\alpha){t_1} }{t_1}.
\end{eqnarray*}
It follows that
\begin{eqnarray}\label{HD-3}
meas\{x:|I_{\alpha,\rho}^1 f|<\tau\}\le\frac{\|I_{\alpha,\rho}^1 f \|^{r_1}_{L^{r_1}}}{\tau^{r_1}}\le \frac{C_2(n,\alpha)\rho^{\frac{r_1[n-(n-\alpha){t_1}]}{t_1}}}{\tau^{r_1}}.
\end{eqnarray}
On the other hand, by the converse Young's inequality, we have
\begin{eqnarray*}
\|I_{\alpha, \rho}^2 f\|_{L^{r_2}}
&\ge&\big(\int_{\mathbb{R}^n}\big(\frac{1-\chi_\rho(|x-y|)}{|x-y|^{n-\alpha}}\big)^{t_2}dy\big)^\frac1{t_2}\|f\|_{L^p}\nonumber\\
&=:&D_2,
\end{eqnarray*} where $\frac1p+\frac1{t_2}=1+\frac1{r_2}$ with $t_2<\frac{n}{n-\alpha},r_2<0.$  Easy to see: $r_2<\frac{np}{n-\alpha p}<r_1.$  Also,
\begin{eqnarray*}
D_2&=&\big(\int_{\mathbb{R}^n\backslash B_\rho(x)}
\big(\frac{1}{|x-y|^{n-\alpha}}\big)^{t_2}dy\big)^{\frac{1}{t_2}}=C_3(n,\alpha)\rho^{\frac{n-(n-\alpha)t_2}{t_2}}.
\end{eqnarray*} It follows that
\begin{eqnarray}\label{HD-3-1}
meas\{x:|I_{\alpha,\rho}^2 f|<\tau\}\le\frac{\|I_{\alpha,\rho}^2 f \|^{r_2}_{L^{r_2}}}{\tau^{r_2}}\le \frac{C_4(n,\alpha)\rho^{\frac{r_2[n-(n-\alpha){t_2}]}{t_2}}}{\tau^{r_2}}.
\end{eqnarray}
Bringing \eqref{HD-3} and \eqref{HD-3-1} into \eqref{HD-2}, we have
\begin{eqnarray*}
meas\{x:|I_\alpha f|<2\tau\}&\le& \frac{C_2(n,\alpha)\rho^{\frac{r_1[n-(n-\alpha){t_1}]}{t_1}}}{\tau^{r_1}}+\frac{C_4(n,\alpha)\rho^{\frac{r_2[n-(n-\alpha){t_2}]}{t_2}}}{\tau^{r_2}}.
\end{eqnarray*}

Now, choosing $\rho=\tau^\frac{p}{p\alpha-{n}}$, we have

\begin{eqnarray*}
\frac{p\,r_1}{p\alpha-{n}}\cdot[\frac n{t_1}-(n-\alpha)]-r_1
&=&\frac{p r_1}{p\alpha-n}\big[n\big( 1+\frac1{r_1}-\frac1p\big)-n+\alpha\big]-r_1\\
&=&\frac{p r_1}{p\alpha-n}\big[\frac{np-r_1n+pr_1\alpha}{r_1p}\big]-r_1\\
%&=&\frac{np-rn+pr\alpha}{p\alpha-n}-r\\
&=&-\frac{np}{n- p\alpha}\\
&=&-q.
\end{eqnarray*}
Similarly, $\frac{p\,r_2}{p\alpha-{n}}\cdot[\frac n{t_2}-(n-\alpha)]-r_2=-\frac{np}{n- p\alpha}=-q.$
%Let
%\[q= \frac{np}{n -p\alpha},
%\] that is,
%\[\frac 1q=\frac1p-\frac{\alpha}{n},
%\]
we then obtain (\ref{HD-1}). %Thus we complete the proof.
\hfill $\Box$

%checked 8-05-2012
%------------------------------------------------------------------
\section{\textbf{Existence and classifications of extremal functions for sharp inequalities}}\label{Section 4}We shall discuss the sharp form of the reversed HLS and prove Theorem \ref{ext-HLS-exis} in this section.

\subsection{Existence of extremal functions}
  In the case of $p=t$, we are able to show the existence of extremal functions.
 In this regard, it is relatively easier to state the sharp form of the inequality on the standard sphere $\mathbb{S}^n.$

Let $\mathcal{S}:\, x \in \mathbb{R}^n\to \xi\in \mathbb{S}^n\backslash(0,0,\cdots,-1)$ be the inverse of a stereographic projection, defined by
\begin{eqnarray*}
\xi^j:=\frac{2x^j}{1+|x|^2},\quad\text{for}~j=1,2,\cdots,n;\quad\xi^{n+1}:=\frac{1-|x|^2}{1+|x|^2}.
\end{eqnarray*}
Easy to check (or, see e.g.  \cite {Lieb1983, LL2001}): for  $x,y\in \mathbb{R}^n,\xi\in \mathbb{S}^n,$
\begin{eqnarray*}
|\mathcal{S}(x)-\mathcal{S}(y)|=\big[\frac{4|x-y|^2}{(1+|x|^2)(1+|y|^2)}\big]^\frac12, ~~~~~~~~~ \quad
d\xi=\big(\frac2{1+|x|^2}\big)^ndx.
\end{eqnarray*}
The area of the unit sphere in $\mathbb{R}^{n+1}$ is given by
\begin{eqnarray*}
|\mathbb{S}^n|:=\int_{\mathbb{S}^n}d\xi= {2\pi^{\frac{n+1}2}}{\big[\Gamma\big(\frac{n+1}2\big)\big]^{-1}}=2^{n}\pi^{\frac{n}2}\frac{\Gamma(n/2)}{\Gamma(n)}.
\end{eqnarray*}

For $1\le n<\alpha$, let $p=\frac{2n}{n+\alpha},q=\frac{2n}{n-\alpha}$ throughout whole subsection 3.1. For any $F(\xi)\in L^p(\mathbb{S}^n),$ let $x=\mathcal{S}^{-1}(\xi) \in \mathbb{R}^n,$ and  define
$f(x):=\big(\frac2{1+|x|^2}\big)^{\frac {n+\alpha}2}F(\xi).$  Also recall
\[\tilde{I}_\alpha F(\xi)=\int_{\mathbb{S}^n}\frac{F(\eta)}{|\xi-\eta|^{n-\alpha}}d\eta.\]
Direct computation yields
\begin{eqnarray*}
\int_{\mathbb{S}^n} |F(\xi)|^p d\xi=\int_{\mathbb{R}^n}\big(\frac2{1+|x|^2}\big)^{-n}|f(\mathcal{S}^{-1}(\xi))|^p\big(\frac2{1+|x|^2}\big)^{n}dx
=\int_{\mathbb{R}^n}|f(x)|^pdx;
\end{eqnarray*}
And
\begin{eqnarray*}
\|\tilde{I}_\alpha F\|^q_{L^q(\mathbb{S}^n)}&=&
\int_{\mathbb{S}^n}\big(\int_{\mathbb{S}^n}\frac{F(\eta)}{|\xi-\eta|^{n-\alpha}}d\eta\big)^qd\xi\\
&=&\int_{\mathbb{R}^n}\big(\int_{\mathbb{R}^n}\big(\frac2{1+|y|^2}\big)^{-\frac{n+\alpha}2}f(\mathcal{S}^{-1}(\eta))\big[\frac{4|x-y|^2}{(1+|x|^2)(1+|y|^2)}\big]^{-\frac{n-\alpha}2} \\
& &\times\big(\frac2{1+|y|^2}\big)^{n}dy\big)^q \big(\frac2{1+|x|^2}\big)^{n}dx\\
%&=& \|{I_\alpha} F\|^q_{L^q(\mathbb{R}^n)}
%&=&\int_{\mathbb{R}^n}\big(\int_{\mathbb{R}^n}\big(\frac2{1+|y|^2}\big)^{-\frac np+n-\frac{n-\alpha}2} \frac{f(y)}{|x-y|^{n-\alpha}}dy\big)^q \big(\frac2{1+|x|^2}\big)^{n-\frac{q(n-\alpha)}2}dx\\
&=&\int_{\mathbb{R}^n}\big(\int_{\mathbb{R}^n}\frac{f(y)}{|x-y|^{n-\alpha}}dy\big)^q dx.
\end{eqnarray*}

Thus
\begin{eqnarray}\label{mj7-22-1}
\|F\|_{L^p(\mathbb{S}^n)}=\|f\|_{L^p(\mathbb{R}^n)},\quad \text{and} \quad \, ~\|\tilde{I}_\alpha F\|_{L^q(\mathbb{S}^n)}=\|I_\alpha f\|_{L^q(\mathbb{R}^n)}.
\end{eqnarray}

We hereby have an equivalent sharp reversed HLS
inequality \eqref{HLS-ineq-1-sph} for all nonnegative $F(\xi), \ G(\xi) \in L^p(\mathbb{S}^n)$.
% or equivalently,
%\begin{equation}\label{HLS-sph-2}
%||\tilde I_\alpha F||_{L^q} \ge N^*(n, \alpha)||F||_{L^p}, \ \ \, \, \ \forall F(\xi)  \in L^p(\mathbb{S}^n) \ \ \mbox{and} \ \ F\ge 0.
%\end{equation}
The sharp constant to inequality (\ref{HLS-ineq-2}) is classified by
\begin{eqnarray}\label{mj-7-20-1}
 N^{*} ({n,\alpha})&=&\inf\{\|\tilde I_\alpha F\|_{L^q(\mathbb{S}^n)}\,:\, F\ge 0, \, \, \, \,\|F\|_{L^p( \mathbb{S}^n)}=1\}\nonumber\\
 &=&\inf\{\|I_\alpha f\|_{L^q(\mathbb{R}^n)}\,:\, f\ge 0, \, \, \, \,\|f\|_{L^p( \mathbb{R}^n)}=1\}.
\end{eqnarray}

We remark that we only need to show that  sharp inequality (\ref{HLS-ineq-2}) holds for all nonnegative $F \in L^1( \mathbb{S}^n)$. In fact, if for all nonnegative $F, G \in L^1( \mathbb{S}^n)$,
$$
\int_{\mathbb{S}^n} \int_{\mathbb{S}^n} F(\xi)|\xi-\eta|^{\alpha-n} G(\eta) dS_\xi dS_\eta\ge N^*(n,\alpha)||F||_{L^p(\mathbb{S}^n)}||G||_{L^p(\mathbb{S}^n)},
$$
then for any nonnegative $u, v \in L^p( \mathbb{S}^n)$, we consider $u_A=\min(u, A)\in L^1( \mathbb{S}^n)$ and  $v_A=\min(v, A) \in L^1( \mathbb{S}^n)$, thus
$$
\int_{\mathbb{S}^n} \int_{\mathbb{S}^n} u_A(\xi)|\xi-\eta|^{\alpha-n} v_A(\eta) dS_\xi dS_\eta\ge N^*(n,\alpha)||u_A||_{L^p(\mathbb{S}^n)}||v_A||_{L^p(\mathbb{S}^n)}.
$$
Sending $A \to \infty$, we obtain via the monotone convergence theorem the desired sharp inequality
  for $u, v \in L^p( \mathbb{S}^n)$.

  Since we are dealing with a reserved inequality, the usual density argument does not work here. More specifically,  even one can prove inequality \eqref{HLS-ineq-1} for all $f, \ g \in C_0^\infty(\mathbb{R}^n)$, it is not obvious that the inequality also holds for general function $f \in L^p(\mathbb{R}^n)$ and $g \in L^t(\mathbb{R}^n)$. So we need to establish the following density lemma on $\mathbb{S}^n.$

  %{\bf (This is one of the main reasons that we can not prove the sharp reversed inequality for $p\ne 2n/(n+\alpha)$.)}

\begin{lm}\label{lem3.1}{\bf (Density Lemma)} Let $F(\xi) \in  L^1( \mathbb{S}^n)$ be a nonnegative function with $\|F\|_{L^p( \mathbb{S}^n)}=1$. For any $\epsilon>0$, there is a nonnegative $G(\xi)\in C^0( \mathbb{S}^n)$, such that
$$
||F-G||_{L^p( \mathbb{S}^n)}+\big{|}||\tilde I_\alpha F||_{L^q( \mathbb{S}^n)}- ||\tilde I_\alpha G||_{L^q( \mathbb{S}^n)}\big {|}<\epsilon.
$$
\end{lm}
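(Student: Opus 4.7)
The plan is to exploit the hypothesis $\alpha > n$, which makes the kernel $|\xi - \eta|^{\alpha-n}$ continuous and uniformly bounded by $2^{\alpha-n}$ on $\mathbb{S}^n \times \mathbb{S}^n$. Two consequences are immediate. First, $\tilde I_\alpha$ maps $L^1(\mathbb{S}^n)$ continuously into $C^0(\mathbb{S}^n)$, with $\|\tilde I_\alpha F\|_{L^\infty(\mathbb{S}^n)} \le 2^{\alpha-n}\|F\|_{L^1(\mathbb{S}^n)}$. Second, because $p<1$ and $\mathbb{S}^n$ has finite measure, H\"older's inequality produces the embedding $L^1(\mathbb{S}^n) \hookrightarrow L^p(\mathbb{S}^n)$ with $\|h\|_{L^p} \le |\mathbb{S}^n|^{(1-p)/p}\|h\|_{L^1}$, so $L^1$-convergence automatically implies $L^p$-convergence.

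First I would choose a sequence of nonnegative continuous functions $G_k \in C^0(\mathbb{S}^n)$ with $\|G_k - F\|_{L^1(\mathbb{S}^n)} \to 0$; this is the standard density of $C^0$ in $L^1$ on a compact manifold, and if a general approximant fails to be nonnegative I replace it by its positive part, which only decreases the $L^1$-distance to $F \ge 0$. By the embedding just noted, $\|G_k - F\|_{L^p(\mathbb{S}^n)} \to 0$ as well. By the uniform boundedness of the kernel, $\tilde I_\alpha G_k \to \tilde I_\alpha F$ uniformly on $\mathbb{S}^n$.

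The hard part, and the reason a naive density argument does not suffice here, is passing from uniform convergence of $\tilde I_\alpha G_k$ to convergence of the $L^q$-norms when $q = 2n/(n-\alpha) < 0$, since the quantity $\bigl(\int h^q\, dS\bigr)^{1/q}$ is extremely sensitive to $h$ being near zero on sets of positive measure. The resolution is to establish a uniform positive lower bound on the limit: $\tilde I_\alpha F$ is continuous on $\mathbb{S}^n$ by dominated convergence (the kernel being continuous in $\xi$ and uniformly bounded, integrated against the fixed $L^1$ function $F$), and since $F \not\equiv 0$, for every $\xi \in \mathbb{S}^n$ the integrand $F(\eta)|\xi - \eta|^{\alpha-n}$ is strictly positive on the set $\{F>0\} \setminus \{\xi\}$, which has positive measure, so $\tilde I_\alpha F(\xi) > 0$. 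Compactness then gives $m := \min_{\xi \in \mathbb{S}^n}\tilde I_\alpha F(\xi) > 0$. Combined with the uniform convergence, $\tilde I_\alpha G_k \ge m/2$ pointwise for all sufficiently large $k$; since $q < 0$, this uniform lower bound translates into the uniform upper bound $(\tilde I_\alpha G_k)^q \le (m/2)^q$, so bounded convergence yields $\int_{\mathbb{S}^n} (\tilde I_\alpha G_k)^q\, dS \to \int_{\mathbb{S}^n} (\tilde I_\alpha F)^q\, dS$ and hence $\|\tilde I_\alpha G_k\|_{L^q(\mathbb{S}^n)} \to \|\tilde I_\alpha F\|_{L^q(\mathbb{S}^n)}$. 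Choosing $G = G_k$ for $k$ large enough produces the desired approximation.
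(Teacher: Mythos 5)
Your proof is correct and follows essentially the same route as the paper's: approximate $F$ by nonnegative continuous $G_k$ in $L^1$, observe that the bounded kernel gives uniform convergence $\tilde I_\alpha G_k \to \tilde I_\alpha F$ and that $\tilde I_\alpha F$ is continuous and bounded below by a positive constant, then use that uniform lower bound (crucial since $q<0$) together with dominated convergence to get $\|\tilde I_\alpha G_k\|_{L^q} \to \|\tilde I_\alpha F\|_{L^q}$. You make explicit a couple of steps the paper leaves implicit (the $L^1 \hookrightarrow L^p$ embedding giving $\|F-G_k\|_{L^p}\to 0$, and the passage from the positive lower bound on $\tilde I_\alpha F$ to a uniform lower bound on $\tilde I_\alpha G_k$ via uniform convergence), but the underlying argument is the same.
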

\begin{proof} Let $\{G_i\}_{i=1}^\infty$ be a sequence of nonnegative, continuous functions such that $||G_i-F||_{L^1( \mathbb{S}^n)} \to 0$ as $i\to \infty$. Then, for any $\xi\in \mathbb{S}^n,$ as $i\to \infty$,
\begin{eqnarray*}
|\tilde I_\alpha G_i(\xi)-\tilde I_\alpha F(\xi)|   & \le& \int_{\mathbb{S}^n} |G_i(\eta)-F(\eta)|\cdot |\xi-\eta|^{\alpha-n} d\eta \\
&\le& C \int_{\mathbb{S}^n} |G_i(\eta)-F(\eta)| d\eta \to 0.
\end{eqnarray*}
Since $||F||_{L^p( \mathbb{S}^n)}=1$, we know that $||F||_{L^1( \mathbb{S}^n)}\ge C>0$. This implies that $\tilde I_\alpha F(\xi)$ is a continuous and positive function. Thus $\tilde I_\alpha G_i(\xi)\ge C>0$ for large $i$. From the dominant convergent theorem, we have
$$
\liminf_{i\to\infty}\int_{\mathbb{S}^n}|\tilde I_\alpha G_i|^q =\int_{\mathbb{S}^n}|\tilde I_\alpha F|^q.
$$
Lemma \ref{lem3.1} then follows from the above.
\end{proof}

%\begin{rem}\label{rem_lemma3.1}
%We tend to believe that condition $F\in L^1( \mathbb{S}^n)$ can not be dropped in Lemma \ref{lem3.1}. If it can be dropped, then inequality \eqref{HLS-sph-2} holds for  $N^{*} ({n,\alpha})= N_1^{*} ({n,\alpha}).$
%\end{rem} Remark removed on 4-18-2013

\smallskip

Next, we prove that the infimum in \eqref{mj-7-20-1} is attained. Due to Lemma \ref{lem3.1} and the remark before it, we can choose $\{F_j\}_{j=1}^\infty \in C^0 (\mathbb{S}^n)$ to be a  nonnegative minimizing sequence with $\|F_j\|_{L^p}=1$. Let $f_j(x):=\big(\frac2{1+|x|^2}\big)^{\frac {n+\alpha}2}F_j(\mathcal{S}(x)),$ then $\{f_j\}_{j=1}^\infty \in C^0 (\mathbb{R}^n)$ is the  nonnegative corresponding minimizing sequence with $\|f_j\|_{L^p}=1$ on $\mathbb{R}^n.$

For a given nonnegative measurable function $u(x)$ on $\mathbb{R}^n$ decaying at infinity, we can define its radially symmetric, non-increasing rearrangement
function $u^*$.
%{\bf precise condition on $f(x)$ at infinity is needed! Note the density argument does not work since the inequality is reversed.}
 $u^*(x)$ is a nonnegative lower-semicontinuous
 function and has the same distribution as $u$.
%It satisfies the following important Riesz rearrangement
%inequality \cite{Lieb2001} (p. 87): for any nonnegative measurable functions $u, v,$ and $w$ on
%$\mathbb{R}^n$, we have
%\begin{eqnarray*}\int_{\mathbb{R}^n}\int_{\mathbb{R}^n}u(x)v(x-y)w(y)dydx\le\int_{\mathbb{R}^n}\int_{\mathbb{R}^n}u^*(x)v^*(x-y)w^*(y)dydx.
%\end{eqnarray*}
Define $v_*=((v^{-1})^*)^{-1}$, then $v_*$  is radially symmetric, increasing rearrangement
function. It is known (see, e.g. the proof of Proposition $9$ in  Brascamp and Lieb \cite{BL1976}) that
\begin{eqnarray*}
\int_{\mathbb{R}^n}\int_{\mathbb{R}^n}u(x)w(x-y)v(y)dydx &\ge\int_{\mathbb{R}^n}\int_{\mathbb{R}^n}u^*(x)w^*(x-y)v_*(y)dydx\\
&=\int_{\mathbb{R}^n}\int_{\mathbb{R}^n}u^*(x)v_*(x-y)w^*(y)dydx.
\end{eqnarray*}

Suppose that $\|w\|_{L^{q'}(\mathbb{R}^n)}=\|w^*\|_{L^{q'}(\mathbb{R}^n)}=1$ for $0<q'<1$. Then
for $q<0$ and $q'=q/(q-1)$, we have
\begin{eqnarray}\label{cov_prop1}
\nonumber \|u*v\|_{L^q}&=&\inf_{\|w\|_{L^{q'}}=1}\int_{\mathbb{R}^n}
\int_{\mathbb{R}^n}u(x)v(x-y)w(y)dydx\nonumber \\
&\ge&\inf_{\|w*\|_{L^{q'}}=1}\int_{\mathbb{R}^n}\int_{\mathbb{R}^n}u^*(x)v_*(x-y)w^*(y)dydx\nonumber \\
&\ge&\inf_{\|w*\|_{L^{q'}}=1}\int_{\mathbb{R}^n}\big(\int_{\mathbb{R}^n}(u^*(x)v_*(x-y))^qdy\big)^\frac1q\big(\int_{\mathbb{R}^n}(w^*(y))^{q'}dy\big)^\frac1{q'}dx\nonumber \\
&=&\|u^**v_*\|_{L^q}.
\end{eqnarray}
%Moreover, if $v$ is nonnegative, radially symmetric, and strictly increasing in the
%radial direction, $u$ is nonnegative, $q<0$ and
%\begin{equation}\label{cov_prop2}
%\|u*v\|_{L^q}=\|u^**v \|_{L^q}<\infty,
%\end{equation}then $u(x)=u^*(x-x_0)$ for some $x_0\in \mathbb{R}^n$. See, for example,  Section $3.9$, $P_{93}$ in \cite{LL2001}. This indicates that if $u(x)$ is a minimizer to $\inf\{\|I_\alpha f\|_{L^q(\mathbb{R}^n)}\,:\, f\ge 0, \, \, \, \,\|f\|_{L^p( \mathbb{R}^n)}=1\},$  $u(x)$ is radially symmetric about some point.

Let $f_j^*$ be the non-increasing radial symmetric rearrangement of $f_j.$
Since
\[\|f_j^*\|_{L^p}=\|f_j\|_{L^p}=1,
\]
and
\begin{eqnarray*}
\|I_\alpha
(f_j)\|^q_{L^q}&=&\int_{\mathbb{R}^{n}}\int_{\mathbb{R}^{n}}
\big(\int_{\mathbb{R}^{n}}\frac{f_j(y)}{|x-y|^{n-\alpha}}dy\big)^qdx\\
&\le&\int_{\mathbb{R}^{n}}
\big(\int_{\mathbb{R}^{n}}\frac{f_j^*(y)}{|x-y|^{n-\alpha}}dy\big)^qdx ~ \ \ \ \ ~ ~ ~ ~ (\mbox{by} \ \ (\ref{cov_prop1}))\\
&=&\|I_\alpha (f_j^*)\|^q_{L^q},
\end{eqnarray*}
we know that $\{f_j^*\}_{j=1}^\infty$ is also a minimizing sequence.  Without loss of generality, we can  assume that $\{f_j\}_{j=1}^\infty$ is a
nonnegative radially symmetric and  non-increasing minimizing sequence.

For $\alpha \in (0, n)$,
to avoid that  $f_j$ converges to a trivial function,  Lieb modified his maximizing sequence via a technical lemma (Lemma 2.4 in Lieb \cite{Lieb1983}). In our case, we need to  modify the minimizing sequence in a similar way so that both $I_\alpha f_j$ and $f_j$ will stay away from the trivial function via the following lemma.

\begin{lm}\label{Lieb-lm-1}
Let $p_1\in(0, 2n/(n+\alpha))$, and $s \in (\frac{n}{n-\alpha}, 0)$ be two parameters satisfying $\frac1{p_1}+\frac1s-1=\frac{n-\alpha}{2n}.$
Suppose that $f\in L^p(\mathbb{R}^n)$  is a  nonnegative, radially symmetric function satisfying $f(|x|)\le\varepsilon|x|^{-\frac np}$ for all $|x|>0$. Then, there exists a constant $C_n$ independent of $f$ and $\varepsilon$ such that
\begin{equation}\label{L-1}
\|I_\alpha f\|_{L^q(\mathbb{R}^n)}\ge C_n\varepsilon^{1-\frac p{p_1}}\|f\|_{L^p(\mathbb{R}^n)}^{\frac p{p_1}}.
\end{equation}
\end{lm}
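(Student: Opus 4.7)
The lemma is the reversed analogue of Lieb's Lemma~2.4 in \cite{Lieb1983}, used to prevent a minimizing sequence from degenerating. My plan consists of a scaling reduction, an application of converse Young's inequality in localized form, and an optimization over the localization scale; the principal difficulty lies in the second part.

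First, I would use the dilation $f(x)\mapsto \lambda^{-n/p}f(x/\lambda)$, under which $\|f\|_{L^p}$, the pointwise condition $f(|x|)\le \varepsilon|x|^{-n/p}$, and both sides of \eqref{L-1} are invariant, to normalize $\|f\|_{L^p}=1$ and reduce the target to $\|I_\alpha f\|_{L^q}\ge C_n\, \varepsilon^{1-p/p_1}$.

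Second, I would note that the hypothesis $1/p_1+1/s-1=(n-\alpha)/(2n)=1/q$ with $p_1\in(0,1)$ and $s,q<0$ is exactly the convolution scaling for converse Young's inequality (Lemma~2.2). Writing $I_\alpha f=K*f$ with $K(x)=|x|^{\alpha-n}$, a direct application would yield $\|I_\alpha f\|_{L^q}\ge \|K\|_{L^s}\|f\|_{L^{p_1}}$; this is blocked on two fronts, since $\int|x|^{(\alpha-n)s}\,dx=+\infty$ at infinity whenever $s>n/(n-\alpha)$ (so $\|K\|_{L^s(\mathbb R^n)}=0$), and $\|f\|_{L^{p_1}}$ need not be finite for $p_1<p$. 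To sidestep both obstructions, I would fix a scale $R>0$, split $f=f\chi_{B_R}+f\chi_{B_R^c}$, and use that positivity of $I_\alpha$ together with $q<0$ yields $\|I_\alpha f\|_{L^q}\ge \|I_\alpha(f\chi_{B_R})\|_{L^q}$. The pointwise bound and $p_1<p$ furnish the local estimate
\[
\|f\chi_{B_R}\|_{L^{p_1}}^{p_1}\le \varepsilon^{p_1}\int_{B_R}|x|^{-np_1/p}\,dx = c_n\, \varepsilon^{p_1}\, R^{n(1-p_1/p)},
\]
while the truncated kernel satisfies $\|K\chi_{B_{2R}}\|_{L^s(B_{2R})}^s=c_n R^{(\alpha-n)s+n}$, positive and finite thanks to $s>n/(n-\alpha)$. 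A localized variant of Lemma~2.2---obtained by replaying its reversed-H\"older chain with the convolution integral restricted to the support of $f\chi_{B_R}$---should then yield
\[
\|I_\alpha(f\chi_{B_R})\|_{L^q}\ge c_n\, R^{\alpha-n+n/s}\,\|f\chi_{B_R}\|_{L^{p_1}}.
\]

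Third, I would substitute the local $L^{p_1}$ bound into the last display and invoke the scaling identities $1/p_1+1/s-1=1/q$ and $1/p-1/q=\alpha/n$; the exponent of $R$ then cancels exactly, and the powers of $\varepsilon$ combine (after undoing the normalization) to the desired $\varepsilon^{1-p/p_1}\|f\|_{L^p}^{p/p_1}$ of \eqref{L-1}.

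\textbf{Main obstacle.} The crux is the local converse Young estimate in the second step, because Lemma~2.2 as literally stated on $\mathbb R^n$ is trivial when the kernel vanishes on a set of positive measure. One must reopen its proof and restrict each application of the reversed H\"older/Young chain to the support of the truncated integrand; an alternative is to first prove a weak-type reversed inequality in the spirit of Proposition~\ref{Mar-1} and interpolate it against the endpoint reversed HLS at $p_1=p$, $s=n/(n-\alpha)$. Verifying that the $R$-exponent cancellation in the third step is exact---with no spurious logarithmic factor or exponent drift---is where most of the bookkeeping happens.
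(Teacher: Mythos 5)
Your proposal diverges from the paper's route and, as written, has a gap that its own third step exposes. The paper (following Lieb's Lemma~2.4) makes the logarithmic change of variables $F(u)=e^{un/p}f(e^u)$, $H(u)=e^{un/q}h(e^u)$ with $h=I_\alpha f$, which turns the radial problem into a one-dimensional convolution $H=L_n*F$ on $\mathbb{R}$. Two things come for free there that you do not have in $\mathbb{R}^n$: the kernel $L_n$ lies in $L^s(\mathbb{R})$ with a genuine (nonzero, finite) $L^s$ quasi-norm for every $s<0$, so converse Young applies globally; and the pointwise hypothesis $f(|x|)\le\varepsilon|x|^{-n/p}$ becomes precisely $\|F\|_{L^\infty(\mathbb{R})}\le\varepsilon$, which powers the interpolation
\[
\|F\|_{L^{p_1}(\mathbb{R})}\;\ge\;\|F\|_{L^\infty}^{\,1-p/p_1}\,\|F\|_{L^p}^{\,p/p_1}\;\ge\;\varepsilon^{\,1-p/p_1}\,\|F\|_{L^p}^{\,p/p_1},
\]
using that $1-p/p_1<0$ and that $t\mapsto t^{1-p/p_1}$ is decreasing. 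That is where the negative power of $\varepsilon$ in \eqref{L-1} comes from. You correctly spotted that $\|\,|\cdot|^{\alpha-n}\|_{L^s(\mathbb{R}^n)}=0$ blocks a naive $\mathbb{R}^n$ application of converse Young; this is exactly the obstruction the one-dimensional reduction removes, and the truncation route you propose trades it for a harder one.

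The gap is in your step three. Your chain reads $\|I_\alpha f\|_{L^q}\ge\|I_\alpha(f\chi_{B_R})\|_{L^q}\ge c_n R^{\alpha-n+n/s}\|f\chi_{B_R}\|_{L^{p_1}}$, so to finish you need a \emph{lower} bound on $\|f\chi_{B_R}\|_{L^{p_1}}$. What you actually derive from $f(|x|)\le\varepsilon|x|^{-n/p}$ is an \emph{upper} bound, $\|f\chi_{B_R}\|_{L^{p_1}}\le c_n\varepsilon R^{n(1/p_1-1/p)}$; substituting this into a $\ge$ chain yields nothing. Worse, combining the two displays only tells you that $R^{\alpha-n+n/s}\|f\chi_{B_R}\|_{L^{p_1}}\le c\varepsilon$ for all $R$ (the $R$-exponents do cancel, as you say, since $\alpha-n+n/s=n(1/p-1/p_1)$), which is an upper bound of the wrong sign in $\varepsilon$: the target lower bound $C\varepsilon^{1-p/p_1}\|f\|_{L^p}^{p/p_1}$ has $1-p/p_1<0$ and so \emph{grows} as $\varepsilon\to 0$. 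The mechanism that produces that negative power is precisely the interpolation $\|F\|_{L^{p_1}}\ge\varepsilon^{1-p/p_1}\|F\|_{L^p}^{p/p_1}$, which needs a uniform $L^\infty$ bound by $\varepsilon$; in $\mathbb{R}^n$ the function $f$ is only bounded by $\varepsilon|x|^{-n/p}$ and is typically unbounded near the origin, so this interpolation does not carry over without the change of variables. To salvage your approach you would have to produce a scale $R$ at which a lower bound $\|f\chi_{B_R}\|_{L^{p_1}}\ge c\,\varepsilon^{1-p/p_1}R^{n(1/p_1-1/p)}\|f\|_{L^p}^{p/p_1}$ holds, and there is no such uniform choice; this is exactly the bookkeeping that the 1D reduction makes unnecessary.
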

\begin{proof} Our proof is similar to that of Lemma 2.4 in Lieb \cite{Lieb1983}.

Define $F\,:\, \mathbb{R}\to\mathbb{R}$ by
\[ F(u)=e^{\frac{un}p}f(e^u).
\]We can easily see that
\[(n\omega_n)^\frac1p\|F\|_{L^p(\mathbb{R})}=\|f\|_{L^p(\mathbb{R}^n)}, \quad\text{and}\quad\|F\|_{L^\infty(\mathbb{R})}\le \varepsilon,
\] where $\omega_n=\frac{2\pi^\frac n2}{n}\Gamma(\frac n2)$ denotes the volume of the $n$-dimensional unit ball.
Define $h=I_\alpha f$. Easy to see that $h$ is radially symmetric. Define $H\,:\, \mathbb{R}\to\mathbb{R}$  by
\[H(u)=e^{\frac{un}q}h(e^u).
\]Then
\[(n\omega_n)^\frac1q\|H\|_{L^q(\mathbb{R})}=\|h\|_{L^q(\mathbb{R}^n)}.
\]

By integrating $dx$ over angles in $\mathbb{R}^n$, an explicit form for $H$ can be obtained as follows.
\begin{eqnarray*}
H(u)=\int_{-\infty}^{+\infty}L_n(u-v)F(v)dv,
\end{eqnarray*}where
\begin{eqnarray*}
L_n(u)&=&\big(\frac12\big)^\frac{n-\alpha}2e^{u(\frac nq-\frac{n-\alpha}2)}Z_n(u),\\
Z_n(u)&=&
\begin{cases}
(n-1)\omega_{n-1}\int_{0}^\pi(\cosh u-\cos\theta)^\frac{\alpha-n}2(\sin\theta)^{n-2}d\theta,\quad&n\ge2,\\
(\cosh u+1)^\frac{\alpha-n}2+(\cosh u-1)^\frac{\alpha-n}2,\quad&n=1.
\end{cases}
\end{eqnarray*}
We have $L_n\in L^s(\mathbb{R})$ for any given $s<0$.

Now, by the converse Young inequality (Lemma 2.2), for given $p_1\in(0, 2n/(n+\alpha))$, and $s \in (\frac{n}{n-\alpha}, 0)$  satisfying $\frac1{p_1}+\frac1s-1=\frac{n-\alpha}{2n}=\frac 1 q,$ we have
\begin{eqnarray}\label{L-2}
\|H\|_{L^q(\mathbb{R})}\ge\|L_n\|_{L^s(\mathbb{R})}\|F\|_{L^{p_1}(\mathbb{R})}.
\end{eqnarray}
On the other hand, since $p_1<p<1$,  we have
\begin{eqnarray*}
\|F\|_{L^{p_1}(\mathbb{R})}&=&\big(\int_{-\infty}^{+\infty}|F(v)|^p|F(v)|^{p_1-p}dv\big)^\frac1{p_1}\\
&\ge&\|F\|^{1-\frac p{p_1}}_{L^{\infty}(\mathbb{R})}\|F\|^\frac p{p_1}_{L^{p}(\mathbb{R})}\\
&\ge&\varepsilon^{1-\frac p{p_1}} \|F\|^\frac p{p_1}_{L^{p}(\mathbb{R})}.
\end{eqnarray*}
%{\bf ($\|F\|^\frac p{p_1}_{L^{p_1}}$ changed to $\|F\|^\frac p{p_1}_{L^{p}}$. 5-17-2012)}
Combining the above with  \eqref{L-2}, we obtain \eqref{L-1}.
\end{proof}

For convenience, denote $e_1=(1,0,\cdots,0,0)\in\mathbb{R}^n$, and  define
\begin{equation*}
a_j:=\sup_{\lambda>0} \lambda^{-\frac{n}p}f_j(\frac {e_1}\lambda).
\end{equation*}
Note that  for $y\in \mathbb{R}^{n}$,
\[f_j(y)=f_j(|y|e_1)=|y|^{-\frac{n}p}|y|^{\frac{n}p}f_j(|y|e_1)\le a_j|y|^{-\frac{n}p},
\]
and $\|I_\alpha f_j\|_{L^q} \to N^{*} ({n,\alpha}) <\infty.$ We know from Lemma \ref{Lieb-lm-1} that $a_j\ge 2 c_0>0.$

For any given nonnegative function $g(x)$ and $\lambda>0$, define $ g^{\lambda}(x) = {\lambda}^{-\frac{n}p}g(\frac {x}{\lambda})$. Easy to check that
$$
%\begin{equation}\label{mj-5-15-1}
I_\alpha g^\lambda (x)= \lambda^{\alpha-\frac n p}(I_\alpha g)(\frac x \lambda);
$$
%\end{equation}
and
\begin{equation}\label{mj-5-17-1}
\|g^{\lambda}\|_{L^p}=\|g\|_{L^p},
\quad \quad \|I_\alpha (g^{\lambda})\|_{L^q}=\|I_\alpha g\|_{L^q}.
\end{equation}

For each $j$, choose $\lambda_j$ so that $f_j^{\lambda_j}(e_1)\ge c_0.$ Due to \eqref{mj-5-17-1},
we know that $\{f_j^{\lambda_j}\}_{j=1}^\infty$ is also a minimizing sequence.
Therefore, we can further assume that there is a nonnegative,
radially symmetric and non-increasing minimizing sequence $\{f_j\}_{j=1}^\infty$ with $\|f_j\|_{L^p}=1$ and $f_j (e_1)\ge c_0$. Similar to Lieb's argument, we know, up to a subsequence, that $f_j \to f_\circ$ a.e. in $\mathbb{R}^{n}$.

Consider the corresponding minimizing sequence $F_j(\xi)= \big(\frac{1+|\mathcal{S}^{-1}(\xi)|^2}2\big)^{\frac {n+\alpha}2}f_j(\mathcal{S}^{-1}(\xi)),$ and $F_\circ(\xi)= \big(\frac{1+|\mathcal{S}^{-1}(\xi)|^2}2\big)^{\frac {n+\alpha}2}f_\circ (\mathcal{S}^{-1}(\xi)).$ We know $F_j(\xi)=F_j(\xi^{n+1}).$ Denote $\mathfrak{N}=(0, ..., 0, 1)$ as the north pole of the sphere, and $\xi_1=\mathcal{S}(e_1).$ So $F_j(\xi_1)\ge c_0$, and $F_j(\xi)\ge 2^{-(n+\alpha)/2} c_0$ for all $\xi$ in the geodesic ball $B_{r_0}(\mathfrak{N})$ where $r_0=dis(\xi_1, \mathfrak{N})$ on $\mathbb{S}^n$. Thus, there is a positive universal constant $C>0$, such that
\begin{equation}\label{lower-bound}
\tilde I_\alpha F_j(\xi)\ge C, \ \ \ \forall \ \xi \in \mathbb{S}^n.
\end{equation}
If $\tilde I_\alpha F_j(\xi) \to +\infty$ almost everywhere, then the dominant convergent theorem (using \eqref{lower-bound}) yields that $\lim_{j\to \infty}\int_{\mathbb{S}^n}|\tilde I_\alpha F_j(\xi)|^q=0.$  But $\lim_{j\to \infty}\int_{\mathbb{S}^n}|\tilde I_\alpha F_j(\xi)|^q=(N^*(n, \alpha))^{q}>0.$ Contradiction.
Thus for $\eta=(0,..., 0, \eta^{n+1})$ and $\eta^{n+1}\in (a, b)\subset (-1, 1)$, $\tilde I_\alpha F_j(\eta)<C(a, b)$ for certain constant $-1\le a<b\le 1$ and a constant $C(a, b)$ depending only on $a,\ b$. This yields
\begin{equation}\label{upper-bound}
\int_{\mathbb{S}^n}F_j \le C_{a, b}
\end{equation}
for some constant $C_{a, b}$ only depending on $a, \ b$. From \eqref{upper-bound} we know that sequence $\{\tilde I_\alpha F_j\}$ is uniformly bounded and equicontinuous on $\mathbb{S}^n$. Up to a subsequence, $\tilde I_\alpha F_j(\xi) \to L(\xi)\in C^0(\mathbb{S}^n).$ Using Fatou Lemma and the reversed HLS, we have, up to a further subsequence, for $m\in \mathbb{N}$, that
\begin{eqnarray*}
0&\ge&\big(\lim_{j\to \infty}\int_{\mathbb{S}^n}|\tilde I_\alpha F_j(\xi)-\tilde I_\alpha F_{j+m}(\xi)|^q  \big)^{1/q}\\
&\ge& C (\lim_{j\to \infty}||F_j-F_{j+m}||^q_{L^p})^{1/q}.
\end{eqnarray*}
Thus $||F_j-F_{j+m}||_{L^p}\to 0$. Since $F_j \to F_\circ$ pointwise, we know that $||F_j-F_\circ||_{L^p}\to 0$, thus $||F_\circ||_{L^p}=1.$

On the other hand, the dominant convergent theorem (using \eqref{lower-bound}) yields $\lim_{j\to \infty}\int_{\mathbb{S}^n}|\tilde I_\alpha F_j(\xi)|^q=\int_{\mathbb{S}^n}|L(\xi)|^q,$ and we know from Fatou Lemma that
$$
L(\xi)=\lim_{j\to \infty} \tilde I_\alpha F_j(\xi)\ge \tilde I_\alpha F_\circ(\xi).$$
It follows that
$$\big(\int_{\mathbb{S}^n}|L(\xi)|^q \big)^{1/q} \ge \big( \int_{\mathbb{S}^n}|\tilde I_\alpha F_\circ(\xi)|^q \big)^{1/q}.$$
Thus the $\inf\{\|\tilde I_\alpha F\|_{L^q(\mathbb{S}^n)}\,:\, F\ge 0, \, \, \, \,\|F\|_{L^p( \mathbb{S}^n)}=1\}$ is achieved by $F_\circ(\xi)$.

\begin{rem}\label{pointwise} It is not clear whether the pointwise convergence
\begin{equation}\label{t-power}
\tilde I_\alpha F_j(\xi) \to \tilde I_\alpha F_\circ(\xi) \ \ \ \ a.e. \mbox{on} \ \ \mathbb{S}^n
  \end{equation}
  is true or not. Even though we tend to believe this is the case, we do not know how to prove it. On the other hand,
  we point out here that a new phenomenon does arise while dealing with a concentrating minimizing sequence for $q<0$. We will show that without assuming that $f_j(e_1)\ge c_0$ for the corresponding minimizing sequence $\{f_j(x)\}$ defined on $\mathbb{R}^n$, the pointwise convergence
\eqref{t-power} may not be true. This is opposite  to the case for $q>0$, where the pointwise convergence \eqref{t-power} usually holds for extremal sequences.

In fact, for
$$f_j(x)=(\frac {\epsilon_j}{\epsilon_j^2+|x|^2})^{\frac {n+\alpha}2}$$
where $\epsilon_j \to 0$ as $ j \to \infty$, we know $f_j \to f_\circ=0$ a.e. in $\mathbb{R}^{n}.$ One may check directly that $I_\alpha f_j$ does not converge to $0$ a.e. in $\mathbb{R}^{n}$ (in fact, $I_\alpha f_j \to \infty$ a.e. in $\mathbb{R}^{n}$). This can also be observed from the reversed HLS inequality:
\begin{eqnarray}\label{mj-7-10}
\|I_\alpha|f_j-f_\circ|\|_{L^q}\ge C(n,\alpha,p)\|f_j-f_\circ\|_{L^{p}}.
\end{eqnarray}
if $I_\alpha f_j \to 0$ pointwise, from Fatou Lemma, we know that the left side in \eqref{mj-7-10}  will go to 0, but the right side $||f_j||_{L^p}=constant>0$.  Impossible.
%This also indicates that Lemma \ref{lem3.1} is essential in our proof for the existence of extremal functions.
\end{rem}

Let $F_\circ\in L^p(\mathbb{S}^{n})$
be a nonnegative minimizer. After normalization, we can assume  $||F_\circ||_{L^p}=1$. Easy to see  $\tilde I_\alpha F_\circ(\xi) \ge C>0$. Thus, for any positive smooth test function $\phi\in C^\infty(\mathbb{S}^{n})$, we have
\begin{equation}\label{mj7-11-2}
\int_{\mathbb{S}^n}F_\circ^{p-1}(\xi) \phi(\xi) d\xi \le C\int_{\mathbb{S}^n}\int_{\mathbb{S}^n} \frac{(\tilde I_\alpha F_\circ(\eta))^{q-1}}{|\eta-\xi|^{n-\alpha}}\phi(\xi)d\eta d\xi\le C_1<\infty.
\end{equation}
 Since $p<1$, we conclude that there is a positive constant $c_0>0$ such that $F_\circ(\xi)>c_0$ everywhere on $\mathbb{S}^n$. Thus
$F_\circ(\xi)$ is a weak positive solution to
\begin{equation}\label{Euler-equ-1}
F_\circ^{p-1}(\xi)=\int_{\mathbb{S}^n} \frac{(\tilde I_\alpha F_\circ(\eta))^{q-1}}{|\xi-\eta|^{n-\alpha}}d\eta,
\quad\quad \forall\, \xi\in \mathbb{S}^n.
\end{equation}

 To complete the proof of Theorem \ref{ext-HLS-exis}, we need to classify all positive solutions to \eqref{Euler-equ-1},  and to computer the best constant next.

 Let $f(x):=\big(\frac2{1+|x|^2}\big)^{\frac {n+\alpha}2}F_\circ(\mathcal{S}(x)),$ then $f(x)$ is a measurable positive function, satisfying:
\begin{equation}\label{Euler-equ-2}
f^{p-1}(x)=\int_{\mathbb{R}^n} \frac{(I_\alpha f(y))^{q-1}}{|x-y|^{n-\alpha}}dy,
\quad\quad \forall\, x \in \mathbb{R}^n.
\end{equation}

%~~~~~~~~~~~~~~~~~~~~~~~~~~~~~~~~~~~~~~~~~~~~~~~~~~~~~~~~~~~~~~~~~~~~~~~~~~~~

\subsection{Extremal functions and best constant}
 We will classify all positive, measurable solutions to equation \eqref{Euler-equ-2}  via the method of moving sphere for $ p=2n/(n+\alpha)$ and $q=2n/(n-\alpha)$, and compute the best constant $N^*(n, \alpha).$

For $R>0, x\in\mathbb{R}^n,$ denote
$$B_R(x)=\{y\in\mathbb{R}^n\,: \,|y-x|<R\},\quad\text{and}~ \Sigma_{x,R}=\mathbb{R}^n\backslash\overline{B_R(x)}.$$
For $x=0$, we write
$B_R=B_R(0),\Sigma_{R}=\Sigma_{0,R}$.

\subsubsection{Regularity\label{subsection 3.1}}
First, we  show that positive solutions to \eqref{Euler-equ-2}  are smooth except the case that the function $f(x)$ (thus $I_\alpha f(x)$) is infinity everywhere.
%In this part, we establish the regularity for all  positive solutions  to \eqref{Euler-equ-2}.
Throughout this subsection, we always assume that  $f$ is a positive measurable function satisfying \eqref{Euler-equ-2} such that  both $f$ and $ I_\alpha f \not\equiv \infty$.

Define $u(y)=f^{p-1}(y),$ $v(x)=I_\alpha f(x)$, $\theta=\frac1{p-1}<0 $ and
$\kappa=q-1<0.$ Then $u,v $ are also positive measurable functions and the single equation \eqref{Euler-equ-2} can be rewritten as an integral system
\begin{equation}\label{sys-1}
\begin{cases}
u(y)=\int_{\mathbb{R}^{n}}{|x-y|^{\alpha- n}} {v^{\kappa}(x)}dx,&\quad y\in\mathbb{R}^{n},\\
v(x)=\int_{\mathbb{R}^{n}}{|x-y|^{\alpha- n}} {u^{\theta}(y)}dy,&\quad x\in\mathbb{R}^{n}.
\end{cases}
\end{equation}
%with
%\begin{equation}
%\label{expon-1}
%\frac1{\kappa+1}+\frac1{\theta+1}=\frac{n-\alpha}{n}.
%\end{equation}

%{\bf Jingbo, if you copy other's proof, then you have to write down where, which paper you copied. For an important paper, and other will read in details, you have to be honest. MJ, 4-21-2012}

\begin{lm}\label{Reg-lm-1}
For  $1\le n<\alpha$ and $\theta,\kappa<0$, if $(u,v)$ is a pair of positive Lebesgue measurable solutions to \eqref{sys-1}, then
\begin{eqnarray*}
&(i)&~~\int_{\mathbb{R}^n}(1+|y|^{\alpha-n})u^{\theta}(y)dy<\infty, \quad\text{and}\quad \int_{\mathbb{R}^n}(1+|x|^{\alpha-n})v^{\kappa}(x)dx<\infty;\\
&(ii)& ~~a:=\lim_{|y|\to\infty}|y|^{n-\alpha}u(y)=\int_{\mathbb{R}^n}v^{\kappa}(x)dx<\infty, \\
\quad\quad& & b:=\lim_{|x|\to\infty}|x|^{n-\alpha}v(x)=\int_{\mathbb{R}^n}u^{\theta}(y)dy<\infty;\\
%even this $p$ copied from YY Li! 4-21-2012
&(iii) &\text{for~some~constants}~C_1,C_2>0,\\
\quad  & &\frac{1+|y|^{\alpha-n}}{C_1}\le u(y)\le C_1(1+|y|^{\alpha-n}),~~~~~~~~~~~~~~\forall\,y\in\mathbb{R}^n,\\
\quad  & &\frac{1+|x|^{\alpha-n}}{C_2}\le v(x)\le C_2(1+|x|^{\alpha-n}),~~~~~~~~~~~~~~\forall\,x\in\mathbb{R}^n.
\end{eqnarray*}
\end{lm}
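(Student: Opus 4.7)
The plan is to bootstrap from the assumption $u,v\not\equiv\infty$, which (for measurable functions) means $E:=\{y:u(y)<\infty\}$ and $F:=\{x:v(x)<\infty\}$ each have positive measure. From any $y_0\in E$, the identity $u(y_0)=\int|x-y_0|^{\alpha-n}v^\kappa(x)\,dx<\infty$ together with the elementary bound $|x-y_0|^{\alpha-n}\ge\epsilon^{\alpha-n}$ on $\{|x-y_0|\ge\epsilon\}$ (valid since $\alpha>n$ so the kernel vanishes at $y_0$ rather than blowing up) immediately yields $v^\kappa\in L^1(\mathbb{R}^n\setminus B_\epsilon(y_0))$ for every $\epsilon>0$. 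The first genuinely substantive step is upgrading this to global $L^1$: I pick $y_1\in E\setminus\{y_0\}$ and set $\epsilon=|y_0-y_1|/3$ so that $B_\epsilon(y_0)\cap B_\epsilon(y_1)=\emptyset$. Since the complements then cover $\mathbb{R}^n$, one concludes $v^\kappa\in L^1(\mathbb{R}^n)$; the symmetric argument in $F$ gives $u^\theta\in L^1(\mathbb{R}^n)$.

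To produce the weighted statement (i), I use $|x-y_0|\ge|x|/2$ for $|x|\ge R_0:=2|y_0|+1$, which converts the finiteness of $u(y_0)$ into $\int_{|x|\ge R_0}|x|^{\alpha-n}v^\kappa\,dx\le 2^{\alpha-n}u(y_0)<\infty$. On the complementary ball $|x|<R_0$ the weight $|x|^{\alpha-n}$ is bounded and $v^\kappa$ is $L^1$ from the previous paragraph, so altogether $\int(1+|x|^{\alpha-n})v^\kappa\,dx<\infty$; the analogous estimate for $u^\theta$ follows identically.

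For (ii), I write
\[
|y|^{n-\alpha}u(y)=\int\Big(\tfrac{|x-y|}{|y|}\Big)^{\alpha-n}v^\kappa(x)\,dx
\]
and apply dominated convergence: the integrand tends pointwise to $v^\kappa(x)$ as $|y|\to\infty$, and on $|y|\ge 1$ is dominated by $C(1+|x|^{\alpha-n})v^\kappa(x)$, which is integrable by (i). This gives $|y|^{n-\alpha}u(y)\to\int v^\kappa\,dx$, and symmetrically for $v$.

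For (iii), the upper bound is immediate from $|x-y|^{\alpha-n}\le C(|x|^{\alpha-n}+|y|^{\alpha-n})$ together with (i). As a byproduct $v(x)\le C(1+|x|^{\alpha-n})<\infty$ everywhere, so in fact $F=\mathbb{R}^n$ and $v^\kappa$ is strictly positive everywhere. The defining integral then shows $u$ is continuous (dominated convergence once more) and strictly positive on all of $\mathbb{R}^n$, hence bounded below by some $c_0>0$ on any fixed ball $B_{R_1}$; on $|y|\ge R_1$ the asymptotic from (ii) gives $u(y)\ge c|y|^{\alpha-n}$, and patching these yields $u(y)\ge c(1+|y|^{\alpha-n})$. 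The main obstacle is really the first step: one-point finiteness of $u$ only controls $v^\kappa$ against a kernel that vanishes near $y_0$ and so cannot by itself give local integrability there, but the two-point trick based on $|E|>0$ is precisely what plugs the gap near each potential bad point.
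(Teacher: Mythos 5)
Your proof is correct, but it reaches the conclusion by a genuinely different route than the paper, which follows Li's Lemma~5.1 in \cite{Li2004}. The paper's opening move is to exploit $\kappa<0$ directly: since $v\not\equiv\infty$, there is a set $E\subset\{v<R\}\cap B_R$ of measure at least $1/R$, and then $u(y)\ge R^{\kappa}\int_E|x-y|^{\alpha-n}\,dx$ immediately produces the \emph{lower} bound $u(y)\ge(1+|y|^{\alpha-n})/C_1$ of part (iii) first; this in turn bounds $v^{\kappa}\le C$ pointwise (again using $\kappa<0$), from which local integrability is free and the weighted $L^1$ estimate of (i) follows from finiteness of $u$ at a single point $y_0$. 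You instead get global $L^1(\mathbb{R}^n)$ of $v^{\kappa}$ outright via the two-point trick — a single $y_0$ with $u(y_0)<\infty$ only controls $v^{\kappa}$ away from $y_0$ (the kernel vanishes there, $\alpha>n$), and a second point $y_1\in E$ covers the gap — and then proceed (i) $\Rightarrow$ (ii) $\Rightarrow$ upper (iii) $\Rightarrow$ lower (iii), deriving the lower bound \emph{last} from positivity of $a=\int v^{\kappa}>0$ in (ii) together with continuity of $u$ on compacts. Both arguments are valid; the paper's is arguably tighter because the lower bound in (iii) comes essentially for free from the sign of $\kappa$ and instantly yields boundedness of $v^{\kappa}$, whereas your approach trades that structural observation for the more elementary two-point covering argument and a slightly longer chain of deductions at the end. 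Neither has a gap.
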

\begin{proof} The proof is the same as that of Lemma 5.1 in Li \cite{Li2004}. We include details for the completion of the paper.

Since $u, \ v \not\equiv \infty$, we know that
\[meas\{y\in\mathbb{R}^n\,:\,u(y)<\infty \}>0, \quad\text{and}\quad meas\{x\in\mathbb{R}^n\,:\,v(x)<\infty \}>0.
\]
Thus, there exist $R>1$ and some measurable set $E$ such that
\[E\subset \{x\in \mathbb{R}^n\,:\,v(x)<R\}\cap B_R
\] with $|E|>\frac1R.$
It follows, for any $y\in\mathbb{R}^n$, that
\begin{eqnarray*}
u(y)&=&\int_{\mathbb{R}^n}|x-y|^{\alpha-n} v^{\kappa}(x)dx\\
&\ge&\int_{E}|x-y|^{\alpha-n} v^{\kappa}(x)dx\\
&\ge&R^{\kappa}\int_{E} |x-y|^{\alpha-n}dy.
\end{eqnarray*}  And then,
\begin{eqnarray*}
\lim_{|y|\to\infty}\frac{u(y)}{(1+|y|^{\alpha-n})}
&\ge&\lim_{|y|\to\infty}\frac{R^{\kappa}}{(1+|y|^{\alpha-n})}\int_{E} |x-y|^{\alpha-n}dx=CR^{\kappa-1}.
\end{eqnarray*}
This shows
\[{u(y)}\ge\frac{(1+|y|^{\alpha-n})}{C_1}.
\]
Similarly, for any $x\in\mathbb{R}^n$, we have
\[{v(x)}\ge\frac{(1+|x|^{\alpha-n})}{C_2}.
\]
This implies that the left hand side inequalities in $(iii)$ hold. %integrability for $v^\kappa$
% in $B_1(0). MJ 4-22-2012

On the other hand, for some $y_0\in\mathbb{R}^n $ with $1\le|y_0|\le2$,
\begin{eqnarray*}
\int_{\mathbb{R}^n}|x-y_0|^{\alpha-n}v^{\kappa}(x)dx
&=&u(y_0)<\infty;
\end{eqnarray*}
And, for some $x_0\in\mathbb{R}^n  $  with  $1\le|x_0|\le2$,
\begin{eqnarray*}
\int_{\mathbb{R}^n}|x_0-y|^{\alpha-n} u^{\theta}(y)dy
&=&v(x_0)<\infty.
\end{eqnarray*}
From the left hand side inequalities in $(iii)$ and the above, we obtain $(i)$.

For $|x|\ge1$,
\begin{eqnarray*}
\frac{|x-y|^{\alpha-n}}{|x|^{\alpha-n}}u^{\theta}(y)\le(1+|y|^{\alpha-n})u^{\theta}(y),
\end{eqnarray*}
and for $|y|\ge1$,
\begin{eqnarray*}
\frac{|x-y|^{\alpha-n}}{|y|^{\alpha-n}}v^{\kappa}(x)\le(1+|x|^{\alpha-n})v^{\kappa}(x).
\end{eqnarray*}
Combining these  with  $(i)$ and  using the dominated convergence theorem we have $(ii):$
\begin{eqnarray*}
a&=&\lim_{|y|\to\infty}|y|^{n-\alpha}u(y)=\lim_{|y|\to\infty}
\int_{\mathbb{R}^n}\frac{|x-y|^{\alpha-n}}{|y|^{\alpha-n}}v^{\kappa}(x)dx=\int_{\mathbb{R}^n}v^{\kappa}(x)dx<\infty,\\ and& &\\
 b&=&\lim_{|x|\to\infty}|x|^{n-\alpha}v(x)=\lim_{|x|\to\infty}
\int_{\mathbb{R}^n}\frac{|x-y|^{\alpha-n}}{|x|^{\alpha-n}}u^{\theta}(y)dy=\int_{\mathbb{R}^n}u^{\theta}(y)dy<\infty.
\end{eqnarray*}
Combining $(i)$ and $(ii)$ with \eqref{sys-1}, we have the right side inequality in $(iii)$.
\end{proof}

\begin{lm}\label{neg-lm-2}
For  $1\le n<\alpha $ and $\theta,\kappa<0$, if $(u,v)$ is a pair of positive Lebesgue measurable solutions to \eqref{sys-1}, then $u,v\in C^\infty( \mathbb{R}^n)$.
\end{lm}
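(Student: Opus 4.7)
The plan is to prove $u, v \in C^\infty(\mathbb{R}^n)$ via a bootstrap induction in regularity. By the symmetric roles of $u$ and $v$ in \eqref{sys-1}, each step for $u$ has a parallel version for $v$, so I describe only the argument for $u$.

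Continuity and the first classical derivative both come from dominated convergence applied to $u(y)=\int|x-y|^{\alpha-n}v^\kappa(x)\,dx$. For $|y|\le R$, the triangle inequality gives $|x-y|^{\alpha-n}\le C_R(1+|x|^{\alpha-n})$, and Lemma \ref{Reg-lm-1}(i) provides $(1+|x|^{\alpha-n})v^\kappa(x)\in L^1$, yielding continuity. The candidate integrand for $\partial_{y_i}u$ is dominated by $(\alpha-n)|x-y|^{\alpha-n-1}v^\kappa(x)$, which is integrable near $x=y$ since $\alpha-n-1>-n$ (equivalent to $\alpha>1$, which holds because $\alpha>n\ge1$), and integrable at infinity since Lemma \ref{Reg-lm-1}(iii) gives $v^\kappa(x)\le C|x|^{\kappa(\alpha-n)}$ while Lemma \ref{Reg-lm-1}(i) forces $(1+\kappa)(\alpha-n)<-n$, so the integrand decays like $|x|^{(1+\kappa)(\alpha-n)-1}\in L^1$. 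Hence $u,v\in C^1$.

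The bootstrap step is the heart of the argument. Assume inductively that $u,v\in C^k$ together with decay bounds $|\partial^\gamma v^\kappa(x)|\le C_\gamma(1+|x|)^{\kappa(\alpha-n)-|\gamma|}$, and analogously for $u^\theta$, for all $|\gamma|\le k$. Using $\partial_{y_i}|x-y|^{\alpha-n}=-\partial_{x_i}|x-y|^{\alpha-n}$ and iterating integration by parts $|\beta|$ times in $x$ — with boundary terms at infinity vanishing thanks to the decay bounds — one obtains for $|\beta|\le k$
\[ \partial_y^\beta u(y)=\int|x-y|^{\alpha-n}\,\partial^\beta v^\kappa(x)\,dx, \]
an integral with exactly the same structure as the one for $u$ itself but with $v^\kappa$ replaced by $\partial^\beta v^\kappa$. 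Applying the one-derivative argument of the previous paragraph to this new integral produces one additional $y$-derivative, giving $u\in C^{k+1}$, and the symmetric step yields $v\in C^{k+1}$. To close the induction one must also propagate the decay estimate to the new order $k+1$ derivatives, which follows by extracting asymptotics from these integral representations and observing that each $y$-differentiation of the kernel $|x-y|^{\alpha-n}$ lowers its far-field growth in $|y|$ by one power. The main obstacle is exactly this parallel propagation of decay and regularity: each integration by parts needs decaying boundary data and each dominated-convergence differentiation needs an $L^1$ majorant, and both ingredients depend on decay bounds established in the previous round. Once decay and smoothness are advanced in tandem, $u,v\in C^\infty(\mathbb{R}^n)$ follows.
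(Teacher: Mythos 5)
Your strategy is genuinely different from the paper's. The paper localizes: for $|y|<R$ it splits $u(y)=\int_{|x|\le 2R}+\int_{|x|>2R}$. The far piece $J_2$ is already $C^\infty(B_R)$ by dominated convergence, using only Lemma~\ref{Reg-lm-1}(i), since every $y$-derivative of the kernel is controlled by $C_R(1+|x|^{\alpha-n})$ on $|x|>2R$; the near piece $J_1$ is a Riesz-type potential of positive order $\alpha$ applied to the locally bounded density $v^\kappa$ on a compact set, which is H\"older, and one then bootstraps the local H\"older exponent of $u^\theta,v^\kappa$ upward. Crucially, that route never needs any decay information on derivatives of $u,v$ at infinity. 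Your route instead works globally, integrating by parts across all of $\mathbb{R}^n$ and tracking decay of all derivatives simultaneously.

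This is where the gap is. You posit the decay law $|\partial^\gamma v^\kappa(x)|\le C_\gamma(1+|x|)^{\kappa(\alpha-n)-|\gamma|}$ and say it ``follows by extracting asymptotics from these integral representations and observing that each $y$-differentiation of the kernel lowers its far-field growth by one power.'' That observation alone does not give the claimed decay. From the representation $\partial^{\beta+\beta'}u(y)=\int\partial_y^{\beta'}|x-y|^{\alpha-n}\,\partial^\beta v^\kappa(x)\,dx$ with $|\beta'|=1$, the naive bound over $|x|<|y|/2$ is $|y|^{\alpha-n-1}\int|\partial^\beta v^\kappa|$, which is only $O(|y|^{\alpha-n-1})$, far weaker than the asserted $O(|y|^{\alpha-n-|\beta|-1})$ once $|\beta|\ge 1$. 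Recovering the extra $|y|^{-|\beta|}$ requires a Taylor expansion of the kernel to order $|\beta|$ together with the vanishing moments $\int x^\delta\,\partial^\beta v^\kappa\,dx=0$ for $|\delta|<|\beta|$ (available since $\partial^\beta v^\kappa$ is an exact $|\beta|$-th derivative of a decaying function), plus tail estimates for the truncated moments. None of this is carried out, and it is precisely the ``main obstacle'' you flagged. Two remarks: first, the weaker estimate $|\partial^\gamma v^\kappa(x)|\le C_\gamma(1+|x|)^{\kappa(\alpha-n)}$ (no gain from each derivative) would already suffice for your boundary-term and dominated-convergence steps and is much easier to propagate, so your hypothesis is both harder than needed and unproved; second, the paper's $R$-split sidesteps the entire issue, which is why it is the cleaner route here.
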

\begin{proof} Again, we adopt the proof given in Li \cite{Li2004}. For $R>0,$ we can split $u$ into  following two parts
\begin{eqnarray*}
u(y)&=& \int_{|y|\le2R} |x-y|^{\alpha-n} v^{\kappa}(x)dx
+\int_{|y|>2R}|x-y|^{\alpha-n} v^{\kappa}(x)dx\\
&=&J_1(x)+J_2(x).
\end{eqnarray*}
 From  Lemma \ref{Reg-lm-1} $(i)$ we know that $J_2(x)$  can be differentiated under the integral for $|y|<R,$ so $J_2\in C^\infty({B_R})$. On the other hand, by Lemma \ref{Reg-lm-1} $(iii)$, we have $v^{\kappa}\in L^\infty(B_{2R})$, it is obvious that $J_1$ is at least H\"{o}lder continuous in ${B_R}$. Since $R>0$ is arbitrary, $u$ is H\"{o}lder continuous in ${\mathbb{R}^n}$. Thus, $u^{\theta}$ is H\"{o}lder continuous in $B_{2R}$. Similarly, we have $v,v^{\kappa}$ are H\"{o}lder continuous in $B_{2R}$. By bootstrap, we conclude that $u,v\in C^\infty(\mathbb{R}^n)$.
\end{proof}

%---------------------------------------------------------------------------------------------------------------------------------------
\subsubsection{Classification of solutions to \eqref{sys-1}}\label{subsection 2.5}
%\label{subsection 4.3}

In this part, we classify all nonnegative, non-infinity solutions to  integral system \eqref{sys-1}
 for $\theta= \kappa=(n+\alpha)/(n-\alpha)$ (that is: for $1\le n<\alpha$, $p=2n/(n+\alpha)$, $q=2n/(n-\alpha)$ in \eqref{Euler-equ-2}).

From the above discussion, we know that if  $(u, v)$ is a pair of positive measurable solutions to system \eqref{sys-1} which is not identical infinity,  then $u,\, v\in C^\infty ( \mathbb{R}^{n} ).$

\begin{thm}\label{sys-critical}
For $1\le n<\alpha$ and $\theta= \kappa=(n+\alpha)/(n-\alpha)$,  if $(u,v)$ is a pair of positive finite smooth solutions to system \eqref{sys-1},
 then $u,v$ must be the following forms on $ \mathbb{R}^n $:
\begin{eqnarray*}
u(\xi)=c_1\big(\frac{1}{|\xi-\xi_0|^2+d^2}\big)^\frac{n-\alpha}2,\\
v(\xi)=c_2\big(\frac{1}{|\xi-\xi_0|^2+d^2}\big)^\frac{n-\alpha}2,
\end{eqnarray*} where $c_1,c_2>0,d>0, \xi_0\in\mathbb{R}^n.$

\end{thm}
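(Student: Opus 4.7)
The plan is to apply the method of moving spheres simultaneously to $u$ and $v$, exploiting the conformal invariance of the system \eqref{sys-1} at the critical exponent $\theta = \kappa = (n+\alpha)/(n-\alpha)$. For each $x_0 \in \mathbb{R}^n$ and $\lambda > 0$, I would introduce the Kelvin-type transforms
$$u_{x_0,\lambda}(y) = \left(\frac{\lambda}{|y-x_0|}\right)^{n-\alpha} u(y^{x_0,\lambda}), \qquad v_{x_0,\lambda}(y) = \left(\frac{\lambda}{|y-x_0|}\right)^{n-\alpha} v(y^{x_0,\lambda}),$$
where $y^{x_0,\lambda} = x_0 + \lambda^2(y-x_0)/|y-x_0|^2$. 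A direct change of variables using the Kelvin identity $|y-w^{x_0,\lambda}| = (|y-x_0|/|w-x_0|)\,|y^{x_0,\lambda}-w|$ and the Jacobian $(\lambda/|w-x_0|)^{2n}$ shows that the critical choice of exponent exactly kills the residual weight $(n-\alpha)(1+\kappa)-2n$, so that $(u_{x_0,\lambda}, v_{x_0,\lambda})$ again satisfies the system \eqref{sys-1}.

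Splitting the representation for $u(y)$ over $\{|z-x_0|\geq \lambda\}$ and $\{|z-x_0|<\lambda\}$, applying the Kelvin change on the inner piece, and subtracting the analogous representation for $u_{x_0,\lambda}(y)$, I arrive at the key identity
$$u(y) - u_{x_0,\lambda}(y) = \int_{|z-x_0|\geq \lambda} H_{x_0,\lambda}(y,z)\bigl[v^\kappa(z) - v_{x_0,\lambda}^\kappa(z)\bigr]dz,$$
together with its analog swapping $u \leftrightarrow v$, where $H_{x_0,\lambda}(y,z) = |y-z|^{\alpha-n} - (|z-x_0|/\lambda)^{\alpha-n}|y-z^{x_0,\lambda}|^{\alpha-n}$. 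The algebraic identity $|y-z|^2 - (|z-x_0|/\lambda)^2 |y-z^{x_0,\lambda}|^2 = -\lambda^{-2}(|y-x_0|^2-\lambda^2)(|z-x_0|^2-\lambda^2)$ together with $\alpha-n>0$ yields $H_{x_0,\lambda}(y,z)<0$ whenever $|y-x_0|,|z-x_0|>\lambda$. Because $\kappa<0$, the implication $v\leq v_{x_0,\lambda} \Rightarrow v^\kappa\geq v_{x_0,\lambda}^\kappa$ makes this identity compatible with the paired inequality $u\leq u_{x_0,\lambda}$, $v\leq v_{x_0,\lambda}$ on $\{|y-x_0|\geq \lambda\}$.

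I then carry out the moving sphere procedure. For $\lambda > 0$ small (depending on $x_0$), the paired inequalities hold on $\{|y-x_0|\geq \lambda\}$: near the sphere, a Taylor expansion produces a leading correction proportional to $(\alpha-n)u(x_0)>0$; at infinity, the asymptotics $u(y),v(y)\sim C|y|^{\alpha-n}$ from Lemma \ref{Reg-lm-1}(ii) are dominated by $u_{x_0,\lambda}(y)\sim u(x_0)(|y-x_0|/\lambda)^{\alpha-n}$ (and similarly for $v$) once $\lambda$ is small enough. Set $\bar{\lambda}(x_0)=\sup\{\lambda>0 : \text{both inequalities hold for all } 0<\mu\leq \lambda\}$. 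A standard maximality argument based on the key identity and the strict negativity of $H_{x_0,\lambda}$ shows that if $\bar{\lambda}(x_0)<\infty$ then $u\equiv u_{x_0,\bar{\lambda}(x_0)}$ and $v\equiv v_{x_0,\bar{\lambda}(x_0)}$ on $\{|y-x_0|\geq \bar{\lambda}(x_0)\}$. The alternative $\bar{\lambda}(x_0)=+\infty$ is excluded: it would force $u(y)\leq u(x_0)(|y-x_0|/\lambda)^{\alpha-n}$ for all $\lambda$, and sending $\lambda\to\infty$ with $y$ fixed gives $u(y)\leq 0$, contradicting positivity. Hence $\bar{\lambda}(x_0)\in(0,+\infty)$ for every $x_0$, and the family of Kelvin symmetries $u\equiv u_{x_0,\bar{\lambda}(x_0)}$ (and likewise for $v$) matches the hypothesis of the Li--Zhu calculus lemma, which forces $u$ and $v$ to take the claimed form $c(|\xi-\xi_0|^2 + d^2)^{(\alpha-n)/2}$ with a common $\xi_0$ and $d$.

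The main obstacle will be the sign bookkeeping. Because $\alpha>n$, the Riesz kernel $|y-z|^{\alpha-n}$ grows rather than decays at infinity and the functions $u,v$ themselves blow up polynomially, while $\kappa<0$ reverses the monotonicity of $v\mapsto v^\kappa$; both reversals flip the direction of the comparisons used in the classical moving spheres argument. In particular, starting the sphere and excluding $\bar{\lambda}(x_0)=+\infty$ depend on the precise growth rates supplied by Lemma \ref{Reg-lm-1} rather than on decay-at-infinity considerations, and the verification that $H_{x_0,\lambda}$ has the correct negative sign must be done via the explicit algebraic identity rather than by reference to the standard case. Once these sign conventions are set consistently, the conformal invariance at the critical exponent carries the remainder of the argument through in parallel with the classical setting.
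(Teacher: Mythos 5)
Your overall strategy follows the paper's proof quite closely: Kelvin transforms $u_{x,\lambda},v_{x,\lambda}$, the positivity of the kernel on the exterior of the sphere (your $H<0$ is exactly $-K<0$, so the signs match), the paired comparison $u\le u_{x_0,\lambda}$, $v\le v_{x_0,\lambda}$ on $\Sigma_{x_0,\lambda}$, the use of Lemma \ref{Reg-lm-1} to start the sphere, and the two calculus lemmas from Li~\cite{Li2004} to conclude. Your observation that $\kappa<0$ reverses the monotonicity of $v\mapsto v^\kappa$ and therefore keeps the comparisons coherent when paired with the reversed sign of the kernel is also correct.

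However, there is a concrete gap in your argument for ruling out $\bar{\lambda}(x_0)=+\infty$. You write that $\bar\lambda(x_0)=+\infty$ ``would force $u(y)\le u(x_0)(|y-x_0|/\lambda)^{\alpha-n}$ for all $\lambda$, and sending $\lambda\to\infty$ with $y$ fixed gives $u(y)\le 0$.'' This is incorrect for two reasons. First, the comparison $u(y)\le u_{x_0,\lambda}(y)$ only holds on $\{|y-x_0|\ge\lambda\}$, so for a fixed $y$ you cannot let $\lambda$ exceed $|y-x_0|$; the domain of the inequality shrinks to nothing in the limit you want to take. Second, the right-hand side of the comparison is $(|y-x_0|/\lambda)^{\alpha-n}u(y^{x_0,\lambda})$, not $(|y-x_0|/\lambda)^{\alpha-n}u(x_0)$: as $\lambda\uparrow|y-x_0|$, the reflected point $y^{x_0,\lambda}$ approaches $y$ and the inequality degenerates to the trivial $u(y)\le u(y)$. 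The correct route, which is what the paper carries out, is to pass to the asymptotic coefficient: from Lemma \ref{Reg-lm-1}(ii) one has $a:=\lim_{|y|\to\infty}|y|^{n-\alpha}u(y)=\int v^{\kappa}>0$, while $\lim_{|y|\to\infty}|y|^{n-\alpha}u_{x_0,\lambda}(y)=\lambda^{n-\alpha}u(x_0)$ because $y^{x_0,\lambda}\to x_0$ as $|y|\to\infty$. The comparison on $\Sigma_{x_0,\lambda}$ then forces $a\le\lambda^{n-\alpha}u(x_0)$ for every $\lambda<\bar\lambda(x_0)$; since $n-\alpha<0$, letting $\lambda\to\infty$ would give $a\le 0$, contradicting $a>0$. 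So the correct order of limits is $|y|\to\infty$ first (reducing to the asymptotic coefficient), then $\lambda\to\infty$ — not $\lambda\to\infty$ with $y$ fixed. (The paper also offers an alternative exclusion via Lemma \ref{Li-1}: if $\bar\lambda(x)=\infty$ for every $x$, that lemma forces $u,v$ to be constants, which cannot solve \eqref{sys-1}.) The remainder of your sketch — the maximality step, the application of Lemma \ref{Li-2}, and the observation that the common family $\bar\lambda(x)$ for $u$ and $v$ forces a common center $\xi_0$ and $d$ — is in line with the paper.
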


\begin{rem}
If one can prove that $u$ is proportional to $v$ first, then system \eqref{sys-1} can be reduced to a single equation, and the classification result for the single equation was early obtained by Li \cite{Li2004}. However, it is not obvious to us that $u$ is proportional to $v$, even though one can show that it is the case for the classic HLS inequality. In the meantime, our current work certainly gives an answer to Li's open question 1 in \cite{Li2004}.
\end{rem}

The above theorem will be proved via the method of moving sphere, following the proof for a single equation given in Li \cite{Li2004}.

%Though, we suspect that there is a similar proof to that in our early work \cite{DZ2012-2},
%based on current reversed HLS inequality, we are unable to do it yet. Thus
%Our current proof follows a previous one given in Li \cite{Li2004}.

For $x\in \mathbb{R}^n $ and $\lambda>0,$ we define the following transform:
\[
\omega_{x,\lambda}(\xi)=\big(\frac\lambda{|\xi-x|}\big)^{n-\alpha}\omega(\xi^{x,\lambda}),\quad \forall\xi\in
 {\mathbb{R}^n }\,\backslash\{x\},
\]
where
$$\xi^{x,\lambda}=x+\frac{\lambda^2(\xi-x)}{|\xi-x|^2}$$
is the Kelvin transformation of $\xi$ with respect to $B_\lambda (x)$. Also we write $\omega^k_{x,\lambda}(\xi):=
(\omega_{x,\lambda}(\xi))^k$ for any give power $k$.
 %and $\Sigma_{x,\lambda}=\mathbb{R}^n \backslash\overline{B_\lambda (x)}.$

%First we have the following lemma.
\begin{lm}\label{Kelvin formula}
Let $1\le n< \alpha$ and $ \theta,\kappa<0$. If $(u,v)$ is a pair of positive solutions to system \eqref{sys-1}, then, for any $x\in
 \mathbb{R}^{n}$,
\begin{eqnarray}
u_{x,\lambda}(\xi)&=&\int_{\mathbb{R}^{n} }\frac{v_{x,\lambda}^{\kappa}(\eta)}
{|\xi-\eta|^{n-\alpha}}\big(\frac\lambda{|\eta-x|}\big)^{\tau_1}d\eta,\quad \forall \, \xi\in \mathbb{R}^{n} ,\label{K-1}\\
v_{x,\lambda}(\eta)&=&\int_{ \mathbb{R}^{n} }\frac{u_{x,\lambda}^\theta(\xi)}{|\xi-\eta|^{n-\alpha}}
\big(\frac\lambda{|\xi-x|}\big)^{\tau_2}d\xi,\quad \forall \, \eta\in\mathbb{R}^{n},\label{K-2}
\end{eqnarray} where $\tau_1= n+\alpha-\kappa(n-\alpha),\tau_2=n+\alpha-\theta(n-\alpha).$
Moreover,
\begin{eqnarray}
u_{x,\lambda}(\xi)-u(\xi)&=&\int_{\Sigma_{x,\lambda}}K(x,\lambda;\xi,\eta)
\big[v^{\kappa}(\eta)-\big(\frac{\lambda}{|\eta-x|}\big)^{\tau_1} v_{x,\lambda}^{\kappa}(\eta)\big]d\eta,~~~~~~~~~~~~~~~~~\label{K-3}\\
v_{x,\lambda}(\eta)-v(\eta)&=&\int_{\Sigma_{x,\lambda}}K(x,\lambda;\eta,\xi)
\big[u^\theta(\xi)-\big(\frac{\lambda}{|\xi-x|}\big)^{\tau_2}u_{x,\lambda}^\theta (\xi)\big]d\xi,~~~~~~~~~~~~~~~~~~~~~~~~\label{K-4}
\end{eqnarray} where
\[
K(x,\lambda;\xi,\eta)=\big(\frac\lambda{|\xi-x|}\big)^{n-\alpha}\frac1{|\xi^{x,\lambda}-\eta|^{n-\alpha}}-\frac1{|\xi-\eta|^{n-\alpha}}
,
\]and
\[
K(x,\lambda;\xi,\eta)>0,~~~~~~~~~~~~\text{for}~\forall \,\xi, \eta\in\Sigma_{x,\lambda},\lambda>0.
\]
\end{lm}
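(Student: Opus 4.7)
The plan is to verify the four identities by direct substitution, leveraging the standard properties of the Kelvin transform associated with $B_\lambda(x)$. The basic ingredients I will use are the two-point distance identity
\[
|\xi^{x,\lambda}-\eta^{x,\lambda}| \;=\; \frac{\lambda^2\,|\xi-\eta|}{|\xi-x|\,|\eta-x|},
\]
its one-variable variant $|\xi-\eta^{x,\lambda}|=(|\xi-x|/|\eta-x|)\,|\xi^{x,\lambda}-\eta|$, the Kelvin Jacobian $d(\eta^{x,\lambda})=(\lambda/|\eta-x|)^{2n}\,d\eta$, and the covariance relation $v(\eta^{x,\lambda})=(|\eta-x|/\lambda)^{n-\alpha}\,v_{x,\lambda}(\eta)$, which follows from the involutivity of the Kelvin map.

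To establish \eqref{K-1}, I start from $u_{x,\lambda}(\xi)=(\lambda/|\xi-x|)^{n-\alpha}u(\xi^{x,\lambda})$, expand $u(\xi^{x,\lambda})$ through the second equation of \eqref{sys-1}, and perform the Kelvin substitution $\tilde\eta=\eta^{x,\lambda}$. Collecting the powers of $|\eta-x|$ that arise from the distance identity, the covariance relation applied to $v^{\kappa}$, and the Jacobian produces the exponent $(n-\alpha)+\kappa(n-\alpha)-2n=-\tau_1$, which matches the defining relation $\tau_1=n+\alpha-\kappa(n-\alpha)$. The remaining $\lambda$ powers combine with $|\xi-x|^{n-\alpha}$ to cancel the prefactor $(\lambda/|\xi-x|)^{n-\alpha}$, producing exactly \eqref{K-1}. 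Identity \eqref{K-2} is obtained by the symmetric argument using the first equation of \eqref{sys-1} with $(\theta,\tau_2)$ in place of $(\kappa,\tau_1)$.

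For \eqref{K-3}, I split both $u(\xi)=\int_{\mathbb{R}^n}|\xi-\eta|^{\alpha-n}v^{\kappa}(\eta)\,d\eta$ and the already established \eqref{K-1} for $u_{x,\lambda}(\xi)$ as $\int_{B_\lambda(x)}+\int_{\Sigma_{x,\lambda}}$. On each $B_\lambda(x)$ piece I perform the Kelvin change $\eta=\tilde\eta^{x,\lambda}$ with $\tilde\eta\in\Sigma_{x,\lambda}$; the one-variable distance identity rewrites $|\xi-\eta|^{\alpha-n}$ as $(|\xi-x|/|\tilde\eta-x|)^{\alpha-n}|\xi^{x,\lambda}-\tilde\eta|^{\alpha-n}$, and the Jacobian and covariance factors balance so that each $B_\lambda$ piece becomes a $\Sigma_{x,\lambda}$ integral carrying the factor $(\lambda/|\xi-x|)^{n-\alpha}|\xi^{x,\lambda}-\tilde\eta|^{\alpha-n}$. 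Subtracting $u(\xi)$ from $u_{x,\lambda}(\xi)$, the four resulting $\Sigma_{x,\lambda}$ integrals regroup into $\int_{\Sigma_{x,\lambda}} K(x,\lambda;\xi,\eta)\bigl[v^{\kappa}(\eta)-(\lambda/|\eta-x|)^{\tau_1}v_{x,\lambda}^{\kappa}(\eta)\bigr]\,d\eta$, yielding \eqref{K-3}. The proof of \eqref{K-4} is identical with the roles of $u,v$ and of $\theta,\kappa$ exchanged.

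Finally, positivity of $K$ on $\Sigma_{x,\lambda}$ reduces to verifying $\frac{|\xi-x|}{\lambda}|\xi^{x,\lambda}-\eta|>|\xi-\eta|$ whenever $|\xi-x|,|\eta-x|>\lambda$. A direct expansion gives the clean algebraic identity
\[
\frac{|\xi-x|^2}{\lambda^2}|\xi^{x,\lambda}-\eta|^2-|\xi-\eta|^2 \;=\; \frac{(|\xi-x|^2-\lambda^2)(|\eta-x|^2-\lambda^2)}{\lambda^2},
\]
whose right-hand side is strictly positive in the regime considered. Because $n-\alpha<0$, raising the resulting inequality to the power $n-\alpha$ \emph{reverses} its direction, which is precisely what produces $K(x,\lambda;\xi,\eta)>0$. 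The only delicate point in the whole argument is this sign reversal at the last step (together with the matching reversal of the roles of $B_\lambda(x)$ and $\Sigma_{x,\lambda}$ relative to the classical HLS regime); the rest of the proof is careful bookkeeping of the exponents of $\lambda$, $|\xi-x|$ and $|\eta-x|$ generated by the Kelvin change of variables.
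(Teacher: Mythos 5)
Your proof is correct, and it follows precisely the approach the paper invokes by reference — the paper itself omits the computation, citing Li's Lemma~5.3 in \cite{Li2004} and the authors' earlier work, and that computation is exactly the Kelvin-transform bookkeeping you carry out. Your exponent accounting (in particular $(n-\alpha)+\kappa(n-\alpha)-2n=-\tau_1$ in \eqref{K-1}, the cancellation of $|\xi-x|$ powers, the four-term regrouping for \eqref{K-3}, and the identity $\frac{|\xi-x|^2}{\lambda^2}|\xi^{x,\lambda}-\eta|^2-|\xi-\eta|^2=\frac{(|\xi-x|^2-\lambda^2)(|\eta-x|^2-\lambda^2)}{\lambda^2}$ together with the sign flip from $n-\alpha<0$) all check out.
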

\begin{proof} The proof is similar to that of Lemma 5.3 in \cite{Li2004}. See also our early work \cite{DZ2012-2}. We skip details here.

\end{proof}

It is clear in Lemma \ref{Kelvin formula}
that $\tau_1=\tau_2=0$ if and only if $\theta=\kappa=\frac{n+\alpha}{n-\alpha}.$  From now on in this subsection, we assume that $\theta=\kappa=\frac{n+\alpha}{n-\alpha}.$

%Define
%\[\Sigma_{x,\lambda}^u=\{\xi\in\Sigma_{x,\lambda} \,|\,u_{x,\lambda}(\xi)<u(\xi)\},\quad\text{and}\quad
%\Sigma_{x,\lambda}^v=\{\eta\in\Sigma_{x,\lambda} \,|\,v_{x,\lambda}(\eta)<v (\eta)\}.
%\]
The next lemma indicates that the procedure of moving sphere can be started.
\begin{lm}\label{K-lm-1}
Assume the same conditions on $n, \alpha, \theta $ and $\kappa$ as those in Theorem \ref{sys-critical}. Then for any $x\in \mathbb{R}^{n} $,
there exists $\lambda_0(x)>0$ such that: $\forall \ 0<\lambda<\lambda_0(x),$
\begin{eqnarray*}
u_{x,\lambda}(\xi)&\ge& u(\xi),\quad   \forall \xi\in \Sigma_{x,\lambda},\\
v_{x,\lambda}(\eta)&\ge& v(\eta),\quad  \forall \eta \in \Sigma_{x,\lambda}.
\end{eqnarray*}
\end{lm}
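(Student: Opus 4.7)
To prove Lemma \ref{K-lm-1}, I would first reduce to the case $x=0$ by translation invariance, so that $\xi^{0,\lambda} = \lambda^2\xi/|\xi|^2$ and $\Sigma_{0,\lambda} = \{|\xi|>\lambda\}$. The crucial structural feature here is that $\alpha > n$ forces $n-\alpha < 0$, so the Kelvin prefactor $(\lambda/|\xi|)^{n-\alpha} = (|\xi|/\lambda)^{\alpha-n}$ is at least $1$ throughout $\Sigma_{0,\lambda}$ and blows up like $\lambda^{-(\alpha-n)}$ as $\lambda \to 0$ for each fixed $\xi$. This is opposite to the classical $0<\alpha<n$ case and is exactly what will drive $u_{0,\lambda} \ge u$. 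I would prove the two pointwise inequalities for $u$ and $v$ separately (taking the minimum of the resulting thresholds at the end) by splitting $\Sigma_{0,\lambda}$ into a bounded annulus $\{\lambda \le |\xi| \le R\}$ and a far-field region $\{|\xi|\ge R\}$.

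For the far-field, I would invoke Lemma \ref{Reg-lm-1}(ii), which gives $u(\xi)/|\xi|^{\alpha-n} \to a > 0$ as $|\xi|\to\infty$, so one can fix $R$ large with $u(\xi)\le 2a|\xi|^{\alpha-n}$ for $|\xi|\ge R$. For $\lambda$ small, the Kelvin image satisfies $|\xi^{0,\lambda}| = \lambda^2/|\xi| \le \lambda^2/R$, so by continuity of $u$ at $0$ one has $u(\xi^{0,\lambda}) \ge u(0)/2$. Hence
\[
u_{0,\lambda}(\xi) \ge \frac{u(0)}{2}\lambda^{n-\alpha}|\xi|^{\alpha-n}, \qquad \frac{u_{0,\lambda}(\xi)}{u(\xi)} \ge \frac{u(0)}{4a}\lambda^{n-\alpha} \to \infty
\]
as $\lambda \to 0$, yielding $u_{0,\lambda} \ge u$ on $\{|\xi|\ge R\}$ for all $\lambda$ below some $\lambda_1$.

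For the bounded annulus, for each $\omega \in S^{n-1}$ I would study $\phi_\omega(r) := u_{0,\lambda}(r\omega) - u(r\omega)$ on $[\lambda,R]$. Since $\phi_\omega(\lambda) = 0$, it suffices to show $\phi_\omega'(r) > 0$ on this interval for $\lambda$ small. A direct differentiation gives
\[
\phi_\omega'(r) = \Big(\frac{\lambda}{r}\Big)^{n-\alpha}\Big[\frac{\alpha-n}{r}u\big((\lambda^2/r)\omega\big) - \frac{\lambda^2}{r^2}\nabla u\big((\lambda^2/r)\omega\big)\cdot\omega\Big] - \nabla u(r\omega)\cdot\omega.
\]
On $[\lambda,R]$ the dominant factor $(\lambda/r)^{n-\alpha}/r = r^{\alpha-n-1}\lambda^{-(\alpha-n)}$ attains its minimum (at $r=\lambda$ when $\alpha\ge n+1$, at $r=R$ otherwise) that blows up at least like $\min(\lambda^{-1}, R^{\alpha-n-1}\lambda^{-(\alpha-n)})$ as $\lambda\to 0$, while $u((\lambda^2/r)\omega)\ge u(0)/2$ for $\lambda$ small, and by the $C^\infty$ regularity from Lemma \ref{neg-lm-2} the gradient $\nabla u$ is uniformly bounded on $\overline{B_R}$. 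The second term in the bracket is smaller by a factor $\lambda^2/r \le \lambda$ than the first, and the subtracted $\nabla u(r\omega)\cdot\omega$ is $O(1)$, so for $\lambda$ small enough $\phi_\omega'(r) > 0$ uniformly in $r\in [\lambda,R]$ and $\omega\in S^{n-1}$, hence $\phi_\omega(r) \ge 0$ on $[\lambda,R]$.

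The main obstacle is precisely this third step: making the uniform sign of $\phi_\omega'$ hold simultaneously in $\omega$ and in $r \in [\lambda,R]$ requires carefully tracking how the blow-up of the Kelvin prefactor dominates both the lower-order bracket term and the ambient gradient of $u$ on $B_R$, and ensuring all constants are independent of $\omega$ (which follows from compactness of $S^{n-1}$ and uniform $C^1$ bounds on $u$). Running the identical argument with $v$ in place of $u$, using Lemma \ref{Reg-lm-1}(ii) for $v$'s asymptotics and Lemma \ref{neg-lm-2} for $v \in C^\infty$, and taking $\lambda_0(x)$ to be the minimum of the two thresholds completes the proof.
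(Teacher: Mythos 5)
Your proposal is correct and follows the same overall scheme as the paper (reduce to $x=0$, split $\Sigma_{0,\lambda}$ into a bounded piece and a far-field piece, treat the far-field via the $O(|\xi|^{\alpha-n})$ growth of $u$ combined with the blow-up of $(\lambda/|\xi|)^{n-\alpha}$, and take the minimum of the two thresholds for $u$ and $v$). The genuine difference is in how you handle the bounded piece. The paper does not differentiate $u_{0,\lambda}-u$ at all; instead it observes that, since $n<\alpha$ and $u(0)>0$ with $u\in C^1$, the function $|\xi|^{(n-\alpha)/2}u(\xi)$ is strictly radially decreasing on a punctured ball $\{0<|\xi|<r_0\}$, for a radius $r_0$ chosen \emph{independently} of $\lambda$. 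Because $|\xi|^{(n-\alpha)/2}u_{0,\lambda}(\xi)=|\xi^{0,\lambda}|^{(n-\alpha)/2}u(\xi^{0,\lambda})$ and $|\xi^{0,\lambda}|<|\xi|<r_0$, this monotonicity immediately gives $u_{0,\lambda}(\xi)>u(\xi)$ for \emph{every} $0<\lambda<|\xi|<r_0$, with no further smallness needed on $\lambda$ for this region. Your route instead fixes $R$ from the far-field analysis, writes $\phi_\omega(r)=u_{0,\lambda}(r\omega)-u(r\omega)$ with $\phi_\omega(\lambda)=0$, and shows $\phi_\omega'(r)>0$ on $[\lambda,R]$ by tracking the $\lambda^{n-\alpha}$ blow-up of the Kelvin prefactor against the bounded $\nabla u$ terms. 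This is also valid — your estimates do hold uniformly in $r\in[\lambda,R]$ and $\omega\in S^{n-1}$ because $(\alpha-n)r^{\alpha-n-1}\lambda^{n-\alpha}$ tends to $+\infty$ uniformly over the annulus, the second bracket term is smaller by a factor $\lambda^2/r\le\lambda$, and the ambient $\nabla u(r\omega)\cdot\omega$ is $O(1)$ — but it requires a smallness condition on $\lambda$ in the annulus that the paper's argument avoids. The trade-off: the paper's approach is shorter and exploits the conformal invariance of $|\xi|^{(n-\alpha)/2}u$ in the standard moving-sphere fashion; yours is a self-contained calculus argument that, while more computation-heavy, makes the mechanism (the divergence of $(\lambda/|\xi|)^{n-\alpha}$ for $\alpha>n$) very explicit. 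Both deliver the statement.
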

\begin{proof} The proof is similar to that of Lemma 5.4 in \cite{Li2004}.
 For simplicity, we assume $x=0,$ and write $u_\lambda=u_{0,\lambda}.$

 Since $n<\alpha$ and $u\in C^1(\mathbb{R}^n)$ is a positive function, there exists $r_0\in(0, 1),$ such that
\[\nabla_\xi\big(|\xi|^{\frac{n-\alpha}2}u(\xi)\big)\cdot \xi<0,~~~~~~~~~~~~~~~~~~\forall\,0<|\xi|<r_0.
\]
%This implies, via mean value theorem,  that
%\[|\xi|^\frac{n-\alpha}2u(\xi)-\big(\frac1{|\xi|}\big)^\frac{n-
%\alpha}2u(\frac{\xi}{|\xi|^2})>0,
%~~~~~~~~~~~~~~~~~~\forall\,0<|\xi|<r_0.
%\]
Thus,
\begin{equation}\label{cla-1}
u_{\lambda}(\xi)>u(\xi),~~~~~~~~~~~~~~~~~~~~~~~\forall\,0<\lambda<|\xi|<r_0.
\end{equation}

Using Lemma \ref{Reg-lm-1} $(iii),$  we have
\begin{equation*}\label{cla-2}
u(\xi)\le C(r_0)|\xi|^{\alpha -n},~~~~~~~~~~~~ ~~~~~~~~~\forall ~~~~|\xi|\ge r_0.
\end{equation*}
% with wrong inequality above, you can get the following inequality?? MJ 4-22-2012
For small $\lambda_0\in(0,r_0)$ and any $0<\lambda<\lambda_0,$ by
$(iii)$ of Lemma \ref{Reg-lm-1} and \eqref{cla-1}
\begin{eqnarray*}
u_{\lambda}(\xi)=\big(\frac\lambda{|\xi|}\big)^{n-\alpha}u(\frac{\lambda^2\xi}{|\xi|^2})
\ge\big(\frac{|\xi|}{\lambda_0}\big)^{\alpha -n}\inf_{B_{r_0}}u\ge
u(\xi),~~~~~~~~~~~~~~~~~~~~~\quad|\xi|\ge r_0.
\end{eqnarray*}
Combining the above with \eqref{cla-1}, we conclude
 \[
 u_{x,\lambda}(\xi) \ge u(\xi), \ \ \ \, \quad ~~~~~\forall \xi \in~\Sigma_{x,\lambda}
 \]
with $x=0$ and $\lambda_0(x)=\lambda_0.$
In the same way,  we can prove the inequality for $v(\eta).$
%\[
%v_{x,\lambda}(\eta) \ge v(\eta),\quad  a.e~ \mbox{in} ~\Sigma_{x,\lambda}.
%\]
\end{proof}

\medskip
For a given $x \in {\mathbb{R}^n}$,
define
\[\bar{\lambda}(x)=sup\{\mu>0\,|\,u_{x,\lambda}(\xi)\ge u (\xi), \mbox{and} \ v_{x,\lambda}(\eta)\ge v
(\eta), \  \forall \lambda\in (0, \mu), \forall\, \xi,\eta\in \Sigma_{x,\lambda}\}.
\]
The next lemma shows: if the sphere stops, then we have conformal invariant properties for solutions.

\begin{lm}\label{K-lm-2}For some $x_0\in
\mathbb{R}^{n}$,
if $\bar{\lambda}(x_0)<\infty$, then
\begin{eqnarray*}
u_{x_0,\bar{\lambda}(x_0)}(\xi)&=&u(\xi),\quad \forall \xi \in ~\mathbb{R}^{n},\\
v_{x_0,\bar{\lambda}(x_0)}(\eta)&=&v(\eta),\quad \forall \eta \in ~\mathbb{R}^{n}.
\end{eqnarray*}
\end{lm}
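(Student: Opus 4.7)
\noindent\emph{Proposal.} The proof will proceed by contradiction. Suppose $\bar\lambda := \bar\lambda(x_0) < \infty$ but at least one of the claimed identities fails; without loss of generality take $x_0 = 0$ and abbreviate $u_\lambda = u_{0,\lambda}$, $v_\lambda = v_{0,\lambda}$. Using the symmetry between the two equations in \eqref{sys-1}, I may assume $v_{\bar\lambda} \not\equiv v$ on $\Sigma_{0,\bar\lambda}$. By the definition of $\bar\lambda$ and continuity, $u_{\bar\lambda} \ge u$ and $v_{\bar\lambda} \ge v$ hold on $\Sigma_{0,\bar\lambda}$. The plan is first to upgrade these weak inequalities to strict ones, and then to show that a small perturbation $\lambda = \bar\lambda + \varepsilon$ still respects $u_\lambda \ge u$ and $v_\lambda \ge v$ on $\Sigma_{0,\lambda}$, contradicting the maximality of $\bar\lambda$.

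For the first step, the critical scaling $\theta = \kappa = (n+\alpha)/(n-\alpha)$ forces $\tau_1 = \tau_2 = 0$ in Lemma \ref{Kelvin formula}, so \eqref{K-3}--\eqref{K-4} collapse to
\begin{eqnarray*}
u_{\bar\lambda}(\xi) - u(\xi) &=& \int_{\Sigma_{0,\bar\lambda}} K(0,\bar\lambda;\xi,\eta)\bigl[v^\kappa(\eta) - v_{\bar\lambda}^\kappa(\eta)\bigr]\,d\eta,\\
v_{\bar\lambda}(\eta) - v(\eta) &=& \int_{\Sigma_{0,\bar\lambda}} K(0,\bar\lambda;\eta,\xi)\bigl[u^\theta(\xi) - u_{\bar\lambda}^\theta(\xi)\bigr]\,d\xi.
\end{eqnarray*}
Since $\kappa<0$ and $v_{\bar\lambda}\ge v$ with strict inequality on a positive-measure subset of $\Sigma_{0,\bar\lambda}$, the bracket $v^\kappa - v_{\bar\lambda}^\kappa$ is nonnegative and strictly positive on a set of positive measure. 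Together with $K>0$ on $\Sigma_{0,\bar\lambda}\times\Sigma_{0,\bar\lambda}$ (Lemma \ref{Kelvin formula}), the first identity forces $u_{\bar\lambda}(\xi)>u(\xi)$ strictly for every $\xi\in\Sigma_{0,\bar\lambda}$. Feeding this back into the second identity (using $\theta<0$) then produces $v_{\bar\lambda}(\eta)>v(\eta)$ strictly on $\Sigma_{0,\bar\lambda}$ as well.

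For the second step, I would decompose $\Sigma_{0,\lambda}$ into a bounded annular piece $\{\lambda < |\xi|\le R\}$ and an exterior piece $\{|\xi|>R\}$. On the annulus, the smoothness of $u,v$ supplied by Lemma \ref{neg-lm-2} and the strict positivity of the integrands provide a uniform gap $u_{\bar\lambda}-u\ge\delta_R$, $v_{\bar\lambda}-v\ge\delta_R$; by continuity of the Kelvin transform in the parameter, this gap survives (say halved) for $\lambda$ in a small right-neighborhood of $\bar\lambda$. On the exterior piece I would invoke the asymptotics from Lemma \ref{Reg-lm-1}(ii): as $|\xi|\to\infty$, $u(\xi)\sim a|\xi|^{\alpha-n}$ while $u_\lambda(\xi)\sim\lambda^{n-\alpha}u(0)|\xi|^{\alpha-n}$, and similarly for $v$ with constants $b$ and $v(0)$. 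The strict inequalities obtained above, when inserted back into the integral representation and then sent to $|\xi|\to\infty$, yield the \emph{strict} coefficient comparisons $\bar\lambda^{n-\alpha}u(0)>a$ and $\bar\lambda^{n-\alpha}v(0)>b$. These leading-order inequalities persist for $\lambda = \bar\lambda+\varepsilon$ with $\varepsilon$ small, and dominate the $o(|\xi|^{\alpha-n})$ remainders once $R$ is taken large enough. Combining the two regions yields $u_\lambda\ge u$ and $v_\lambda\ge v$ on $\Sigma_{0,\lambda}$ for $\lambda\in(\bar\lambda,\bar\lambda+\varepsilon)$, contradicting the maximality of $\bar\lambda$.

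The main obstacle is precisely the infinity estimate. Because $\alpha>n$, both $u_\lambda$ and $u$ blow up at the common rate $|\xi|^{\alpha-n}$, so the scheme succeeds only through the strict dominance of the leading constants; this strict dominance must be extracted from the integral representation of $u_{\bar\lambda}-u$ rather than by merely passing to the limit in the pointwise inequality. Controlling the convergence $|\xi|^{n-\alpha}(u_\lambda-u)\to\bar\lambda^{n-\alpha}u(0)-a$ uniformly in $\lambda$ close to $\bar\lambda$ will require a dominated-convergence argument based on the two-sided bounds in Lemma \ref{Reg-lm-1}(iii), and this is the subtle point where the negative-exponent setting must be handled with care.
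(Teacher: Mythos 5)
The overall strategy matches the paper's: reduce to $x_0=0$, invoke the dichotomy (either both identities hold, or both inequalities are strict, via the positivity of $K$ and $\tau_1=\tau_2=0$), and show the strict case contradicts maximality of $\bar\lambda$ by pushing the sphere slightly further. Your handling of the exterior region and the need to extract strict dominance of the leading coefficient from the integral representation (rather than from the pointwise inequality, which collapses to a non-strict bound in the limit) is conceptually the same as the paper's Fatou argument.

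However, your annulus argument contains a genuine gap. You write that on the bounded annular piece $\{\lambda<|\xi|\le R\}$, smoothness and strict positivity ``provide a uniform gap $u_{\bar\lambda}-u\ge\delta_R$,'' which then survives a small perturbation of $\lambda$. This is false near the inner boundary: at $|\xi|=\bar\lambda$ the Kelvin reflection is the identity, so $u_{\bar\lambda}(\xi)-u(\xi)=0$, and the infimum of $u_{\bar\lambda}-u$ over $\{\lambda<|\xi|\le R\}$ tends to $0$ as $\lambda\to\bar\lambda^+$. A uniform positive gap simply does not exist in a fixed neighborhood of the moving sphere, so ``by continuity of the Kelvin transform in the parameter, this gap survives (say halved)'' does not go through. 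Likewise, for $\lambda$ slightly bigger than $\bar\lambda$ one must show $u_\lambda-u\ge 0$ on $\{\lambda\le|\xi|\le\bar\lambda+1\}$, but $u_\lambda-u$ vanishes identically on $|\xi|=\lambda$, so again no uniform gap is available. This is precisely the technically delicate part, and the paper handles it (Step 2) by a Hopf-type boundary estimate: $K(0,\lambda;\xi,\eta)=0$ on $|\xi|=\lambda$ with a \emph{positive} inward normal derivative, giving $K\ge\delta_2(|\xi|-\lambda)$ for $\eta$ in a fixed far shell, while the error terms coming from replacing $\bar\lambda$ by $\lambda$ and from $\int_{\Sigma_\lambda\setminus\Sigma_{\bar\lambda+1}}K\,d\eta$ are both $O\bigl(\varepsilon(|\xi|-\lambda)\bigr)$. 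Balancing the linear-in-$(|\xi|-\lambda)$ gain against the linear-in-$(|\xi|-\lambda)$ loss, with the gain's constant $\delta_1\delta_2$ independent of $\varepsilon$, is what closes the argument. Your proposal needs this boundary-rate comparison; without it the interior piece is not controlled.
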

\begin{proof}
Again, the proof is similar to that of Lemma 5.5 in \cite{Li2004}.

Without loss of generality, we assume that
$x_0=0$, and write $\bar{\lambda}=\bar{\lambda}(0), \, u_\lambda=u_{0,\lambda}, \, v_\lambda=v_{0,\lambda}, \,
\xi^\lambda=\xi^{0,\lambda}, \, \eta^\lambda=\eta^{0,\lambda}.$
By the definition of $\bar{\lambda}$,
\begin{eqnarray*}
u_{\bar{\lambda}}(\xi)\ge
u(\xi), \quad
v_{\bar{\lambda}}(\eta)\ge
v(\eta),\quad\forall\,|\xi|,|\eta|\ge\bar{\lambda}.
\end{eqnarray*}
Noting that $\tau_1=\tau_2=0$ for $\theta=\kappa=\frac{n+\alpha}{n-\alpha}.$
 Thus, using \eqref{K-3} and \eqref{K-4} with $x=0,\lambda=\bar{\lambda},$ and the positivity of the kernel,
 we know that there are following two cases:

  $(\mathbf{a})$ $u_{\bar{\lambda}}(\xi)=u(\xi)$ and  $v_{\bar{\lambda}}(\eta)=
v(\eta)$  for all $|\xi|,|\eta|\ge\bar{\lambda}$;
  or
  $(\mathbf{b})$ $u_{\bar{\lambda}}(\xi)>u(\xi)$ and $v_{\bar{\lambda}}(\eta)>v(\eta)$ for all $|\xi|,|\eta|\ge\bar{\lambda}$.

  We show  that case $(\mathbf{b})$ can not happen. More precisely, supposing that  $u_{\bar{\lambda}}(\xi)>u(\xi)$  and $v_{\bar{\lambda}}(\eta)>v(\eta)$ for all $|\xi|,|\eta|\ge\bar{\lambda}$, we will show that there is a
$\varepsilon_*>0$, such that, for any $\lambda\in (\bar \lambda, \bar{\lambda}+\varepsilon_*)$, $u_{{\lambda}}(\xi) \ge u(\xi)$ and $v_{{\lambda}}(\eta)\ge v(\eta)$  for any
$|\xi|,|\eta|>{\lambda}.$ This contradicts to the
definition of $\bar{\lambda}$. We will show this via two steps.

%{\bf I am here now. 4-22-2012}
 Step $1$. There is a $\varepsilon_1\in(0,1)$, such that for any
$\varepsilon< \varepsilon_1$, $\bar{\lambda}\le
\lambda\le\bar{\lambda}+\varepsilon$, if $|\xi|,|\eta|\ge\bar{\lambda}+1$, then
\begin{eqnarray*}
u_{\lambda}(\xi)-u(\xi)\ge \frac{\varepsilon_1}2|\xi|^{\alpha-n}\quad\text{and}\quad v_{\lambda}(\eta)-v(\eta)\ge \frac{\varepsilon_1}2|\eta|^{\alpha-n}.
\end{eqnarray*}

From Lemma \ref{Kelvin formula}, we know that $K(x,\lambda,\xi,z)>0$
$\forall \,
\xi,\eta\in \Sigma_{x,\lambda}$.
By \eqref{K-3} with $x=0,\lambda=\bar{\lambda},$  and  Fatou
Lemma, we know, for all $|\xi|\ge\bar{\lambda}$, that
\begin{eqnarray*}
& &\liminf_{|\xi|\to\infty}|\xi|^{n-\alpha}(u_{\bar{\lambda}}(\xi)-u(\xi))\\
&\ge&\int_{\Sigma_{\lambda}}\liminf_{|\xi|\to\infty}|\xi|^{n-\alpha} K(0,\bar{\lambda},\xi,\eta)[v^{\kappa}(\eta)
-v^{\kappa}_{\bar{\lambda}}(\eta)]d\eta\\
&=&\int_{\Sigma_{\lambda}}\big(\big(\frac{\bar{\lambda}}{|\eta|}\big)^{n-\alpha}-1\big)[v^{\kappa}(\eta)
-v^{\kappa}_{\bar{\lambda}}(\eta)]d\eta.
\end{eqnarray*}
Thus, using the positivity of $v_{\bar{\lambda}}-v$, we know that there exists $\varepsilon_2\in(0,1)$, such that
\begin{eqnarray*}
u_{\bar{\lambda}}(\xi)-u(\xi)\ge
\varepsilon_2|\xi|^{\alpha-n},~~~~~~~~~~~~~~~~\forall\,|\xi|\ge\bar{\lambda}+1.
\end{eqnarray*}

Due to the continuity of $u$,  there exists a
$\varepsilon_3\in(0,\varepsilon_2)$ such that for
$|\xi|\ge\bar{\lambda}+1$ and $\bar{\lambda}\le
\lambda\le\bar{\lambda}+\varepsilon_3$,
\begin{eqnarray*}
|u_\lambda(\xi)-u_{\bar{\lambda}}(\xi)|&=&|\big(\frac\lambda{|\xi|}\big)^{n-\alpha}u(\frac{\lambda^2\xi}{|\xi|^2})
-\big(\frac{\bar{\lambda}}{|\xi|}\big)^{n-\alpha}u(\frac{\bar{\lambda}^2\xi}{|\xi|^2})|\\
&\le& \frac{\varepsilon_3}{2}|\xi|^{\alpha-n}.
\end{eqnarray*} Thus
\begin{equation*}
u_\lambda(\xi)-u(\xi)=u_{\bar{\lambda}}(\xi)-u(\xi)+u_\lambda(\xi)-u_{\bar{\lambda}}(\xi)
\ge\frac{\varepsilon_2}{2}|\xi|^{\alpha-n},
\end{equation*}
 for all $|\xi|\ge\bar{\lambda}+1,\bar{\lambda}\le\lambda\le\bar{\lambda}+\varepsilon_2.$

Similarly, there exists $\varepsilon_4\in(0,\varepsilon_3)$ such that
\begin{eqnarray*}
v_{\lambda}(\eta)-v(\eta)\ge\frac{\varepsilon_4}2|\eta|^{\alpha-n},
\end{eqnarray*}  for all $|\eta|\ge\bar{\lambda}+1,\bar{\lambda}\le\lambda\le\bar{\lambda}+\varepsilon_4.$ Choosing $\varepsilon_1=\varepsilon_4$,
we complete the proof for Step $1$.

Step $2$. There is a $\varepsilon_* < \varepsilon_1$, such that for any $\varepsilon < \varepsilon_*$,
$\bar{\lambda}\le\lambda \le\bar{\lambda}+\varepsilon ,$ if $\xi,\eta \in
\mathbb{R}^n$ satisfy  $\lambda\le |\xi|,|\eta|\le\bar{\lambda}+1$, then
$u_{\lambda}(\xi)-u(\xi)\ge0$ and $v_\lambda(\eta)-v(\eta)\ge0.$

Let $\varepsilon_*\in(0,\varepsilon_1)$.%, which will be given precisely later.
 We have, for $\bar{\lambda}\le
\lambda\le\bar{\lambda}+\varepsilon_*$ and
$\lambda\le|\xi|\le\bar{\lambda}+1$,
\begin{eqnarray}\label{cla-5}
u_\lambda(\xi)-u(\xi)&=&\int_{\Sigma_\lambda}K(0,\lambda;\xi,\eta)\big(
v^\kappa(\eta)-v^\kappa_{\lambda}(\eta)\big)d\eta\nonumber\\
&\ge&\int_{\Sigma_{\lambda}\backslash\Sigma_{\bar{\lambda}+1}}K(0,\lambda;\xi,\eta)\big(
v^\kappa(\eta)-v^\kappa_{\lambda}(\eta)\big)d\eta\nonumber\\
&
&+\int_{\Sigma_{\bar{\lambda}+2}\backslash\Sigma_{ \bar{\lambda}+3}}K(0,\lambda;\xi,\eta)\big(
v^\kappa(\eta)-v^\kappa_{\lambda}(\eta)\big)d\eta\nonumber\\
&\ge&\int_{\Sigma_{\lambda}\backslash\Sigma_{\bar{\lambda}+1}}K(0,\lambda;\xi,\eta)\big(
v^\kappa_{\bar{\lambda}}(\eta)-v^\kappa_{\lambda}(\eta)\big)d\eta\nonumber\\
&
&+\int_{\Sigma_{\bar{\lambda}+2}\backslash\Sigma_{ \bar{\lambda}+3}}K(0,\lambda;\xi,\eta)\big(
v^\kappa(\eta)-v^\kappa_{\lambda}(\eta)\big)d\eta.
\end{eqnarray}
By Step $1$, there exists $\delta_1>0$ such that
\begin{eqnarray*}
v^\kappa(\eta)-v^\kappa_{\lambda}(\eta)\ge
\delta_1,~~~~~~~~~~~~~~~~\forall \eta\in\Sigma_{\bar{\lambda}+2}\backslash\Sigma_{ \bar{\lambda}+3}.
\end{eqnarray*}
 Since
 \[K(0,\lambda;\xi,\eta)=0,~~~~~~~~~~~~~~~~\forall\,|\xi|=\lambda,
 \]
 \[\nabla_\xi K(0,\lambda;\xi,\eta)|_{|\xi|=\lambda}=(\alpha-n)|\xi-\eta|^{\alpha-n-2}\big(|\eta|^2-|\xi|^2\big)>0,~~~~~~~~~~~~~~~~~~~~~~~~
 ~~~~\forall\,\eta \in\Sigma_{\bar{\lambda}+2}\backslash\Sigma_{ \bar{\lambda}+3},
 \]
and the function is smooth in the relevant region, it follows, also based on the positivity of kernel, that
\[K(0,\lambda;\xi,\eta)\ge\delta_2(|\xi|-\lambda),~~~~~~~~~\forall\,\lambda\le|\xi|\le\bar{\lambda}+1,
\forall \eta\in\Sigma_{\bar{\lambda}+2}\backslash\Sigma_{ \bar{\lambda}+3}.
\]
 where $\delta_2>0$ is some constant independent of $\varepsilon_*$. It is easy to see that for some constant $C>0$ (independent of $\varepsilon_*$), and $\bar{\lambda}\le\lambda\le\bar{\lambda}+\varepsilon_*$,
 \[
 |v^\kappa_{\bar{\lambda}}(\eta)-v^\kappa_{\lambda}(\eta)|\le C\varepsilon,~~~~~~~~~~
 ~\forall\,\eta\in\Sigma_{\bar{\lambda}+2}\backslash\Sigma_{ \bar{\lambda}+3},\lambda\le|\xi|\le\bar{\lambda}+1.
 \]
Using the mean value theorem, we have, for $\lambda\le|\xi|\le\bar{\lambda}+1$, that
\begin{eqnarray*}
& &\int_{\Sigma_{{\lambda}}\backslash\Sigma_{\bar \lambda +1}}K(0,\lambda;\xi,\eta)d\eta
 = \int_{\Sigma_{{\lambda}}\backslash\Sigma_{\bar \lambda +1}}
\big(\big(\frac{|\xi|}{\lambda}\big)^{\alpha-n}|\xi^\lambda-\eta|^{\alpha-n}-|\xi-\eta|^{\alpha-n}\big)d\eta\\
&=& \int_{\Sigma_{{\lambda}}\backslash\Sigma_{\bar \lambda +1}}
\big[(\big(\frac{|\xi|}{\lambda}\big)^{\alpha-n}-1)|\xi^\lambda-\eta|^{\alpha-n}+\big(|\xi^\lambda-\eta|^{\alpha-n}-|\xi-\eta|^{\alpha-n}\big)\big]d\eta\\
&\le& C(|\xi|-\lambda)+ \int_{\Sigma_{{\lambda}}\backslash\Sigma_{\bar \lambda +1}}
\big(|\xi^\lambda-\eta|^{\alpha-n}-|\xi-\eta|^{\alpha-n}\big)d\eta\\
&\le& C(|\xi|-\lambda)+ C|\xi^\lambda-\xi|\\
&\le&C(|\xi|-\lambda).
\end{eqnarray*} Thus, for  $\varepsilon\in (0, \varepsilon_*), \, \bar{\lambda}\le\lambda\le\bar{\lambda}+\varepsilon, \, \lambda\le|\xi|\le\bar{\lambda}+1$,  from \eqref{cla-5} it follows
\begin{eqnarray*}
u_\lambda(\xi)-u(\xi)&\ge&-C\varepsilon\int_{\Sigma_{\lambda}\backslash \Sigma_{\bar{\lambda}+1} }K(0,\lambda;\xi,\eta)d\eta
+\delta_1\delta_2(|\xi|-\lambda)\int_{\Sigma_{\bar{\lambda}+2}\backslash\Sigma_{\bar{\lambda}+3}}d\eta\\
&\ge&\big(\delta_1\delta_2\int_{\Sigma_{\bar{\lambda}+2}\backslash\Sigma_{\bar{\lambda}+3}}d\eta-C\varepsilon\big)(|\xi|-\lambda)\ge0.
\end{eqnarray*}
Along the same line, we can show
 \[ v_\lambda(\eta)-v(\eta)\ge0
 \] for $\lambda\le |\xi|,|\eta|\le\bar{\lambda}+1$.
Step $2$ is established. Hence we complete the proof for Lemma \ref{K-lm-2}.
\end{proof}

\medskip

The following two key calculus lemmas are needed for carrying out moving sphere procedure. Under a stronger assumption ($f\in
C^1(\mathbb{R}^n)$), these lemmas were early proved by Li and Zhu \cite{LZ1995} (see, also, Li and Zhang \cite{LZ2003}). The current forms, due to Li and Nirenberg, are adopted from Li \cite{Li2004}. See Frank and Lieb \cite{FL2010} for further extension to nonnegative measures.

\begin{lm}\label{Li-1} (Lemma~$5.7$~ in~\cite{Li2004}) For $n\ge1, \mu\in\mathbb{R},$
let $f$ be a function defined on $\mathbb{R}^n$ and valued in $(-\infty,+\infty)$ satisfying
\[
\big(\frac\lambda{|y-x|}\big)^\mu f\big(x+\frac{\lambda^2(y-x)}{|y-x|^2}\big)\ge f(y),\quad\forall\,
|y-x|\ge\lambda>0,x,y\in\mathbb{R}^n,
\]
then $f(x)=$ constant.
\end{lm}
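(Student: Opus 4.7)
The plan is to compare values of $f$ at arbitrary pairs of points by a single, careful choice of a center $x$ and a radius $\lambda$ that makes them Kelvin-paired, and then to exploit the fact that the coefficient in front of $f$ approaches $1$ as the center is pushed to infinity along the line through the two points.

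Fix two distinct points $y_1,y_2\in\mathbb{R}^n$. After a translation and rotation I may assume $y_1=0$ and $y_2=e_1$. The first step is to pick $x=-s e_1$ for $s>0$ and set $\lambda=\sqrt{s(s+1)}$. Then $|y_1-x|=s$ and $|y_2-x|=s+1$, and a direct computation gives
\[
y_2^{x,\lambda}=x+\frac{\lambda^2(y_2-x)}{|y_2-x|^2}=-s\,e_1+\frac{s(s+1)}{s+1}\,e_1=0=y_1,
\]
while $|y_2-x|=s+1>\lambda$, so the hypothesis applies to $y_2$. It yields
\[
\left(\frac{s}{s+1}\right)^{\mu/2} f(y_1)=\left(\frac{\lambda}{|y_2-x|}\right)^{\mu} f(y_2^{x,\lambda})\ge f(y_2).
\]
Since the coefficient $(s/(s+1))^{\mu/2}\to 1$ as $s\to\infty$ (regardless of the sign of $\mu$), letting $s\to\infty$ in this inequality (which is valid for every $s>0$, so we may pass to the limit in the left-hand side) produces $f(y_1)\ge f(y_2)$.

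The second step is the symmetric construction: choose $x=(1+s)e_1$ on the other side of the segment and again $\lambda=\sqrt{s(s+1)}$. Then $|y_1-x|=s+1>\lambda$ and the same kind of computation gives $y_1^{x,\lambda}=y_2$. Applying the hypothesis with $y=y_1$ now yields
\[
\left(\frac{s}{s+1}\right)^{\mu/2} f(y_2)\ge f(y_1),
\]
and $s\to\infty$ produces $f(y_2)\ge f(y_1)$. Combining the two inequalities gives $f(y_1)=f(y_2)$, and since $y_1,y_2$ were arbitrary, $f$ is constant.

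The main obstacle I expect is simply the bookkeeping of the Kelvin transform: one has to verify that the chosen $x$ lies outside the ball $B_\lambda$ on the correct side so that the hypothesis is actually applicable (i.e., $|y-x|\ge\lambda$) and that the image under the Kelvin transform is indeed the other point. Beyond this, the limit argument is robust: the inequality holds for every $s>0$ with the same $f(y_1)$ (resp.\ $f(y_2)$) on the left, and the multiplicative factor tends to $1$, so one may pass to the limit irrespective of the sign of $f(y_1)$, of $f(y_2)$, or of $\mu$. No regularity of $f$ is used; only the pointwise inequality in the hypothesis.
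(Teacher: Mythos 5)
Your proof is correct, and it is essentially the standard argument for this Liouville-type calculus lemma (the paper itself does not reproduce a proof; it cites Lemma 5.7 of Li \cite{Li2004}, whose proof uses the same idea of pushing the inversion center to infinity along the line through the two points so that the Kelvin prefactor tends to $1$). The computations check out: with $x=-se_1$ and $\lambda=\sqrt{s(s+1)}$ one has $|y_2-x|=s+1>\lambda$ and $y_2^{x,\lambda}=0$, giving $(s/(s+1))^{\mu/2}f(y_1)\ge f(y_2)$, and symmetrically with $x=(1+s)e_1$; letting $s\to\infty$ gives the two opposite inequalities and hence $f(y_1)=f(y_2)$. The only small imprecision is the phrase ``after a translation and rotation I may assume $y_1=0$ and $y_2=e_1$'': a translation and rotation only normalizes $y_2$ to $re_1$ with $r=|y_2-y_1|$. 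This is harmless — either observe that the hypothesis is dilation-invariant (replace $f$ by $f(r\cdot)$), or simply run the same computation with $x=-se_1$, $\lambda=\sqrt{s(s+r)}$, $|y_2-x|=s+r$, which produces the prefactor $(s/(s+r))^{\mu/2}\to 1$ just as before. Otherwise the argument is complete: no continuity of $f$ is needed, and the limit passage is valid for either sign of $\mu$ and of the values of $f$.
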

\begin{lm}\label{Li-2} (Lemma~$5.8$~ in~\cite{Li2004})
For $n\ge1, \mu\in\mathbb{R},$
let $f\in C^0(\mathbb{R}^n),$  and $\mu\in\mathbb{R}.$ Suppose that for every $x\in\mathbb{R}^n$, there exists $\lambda\in \mathbb{R}$ such that
\[
\big(\frac\lambda{|y-x|}\big)^\mu f\big(x+\frac{\lambda^2(y-x)}{|y-x|^2}\big)= f(y), \ \ \quad ~ \forall
y\in\mathbb{R}^n\setminus\{x\}.
\] Then there are $a\ge0,d>0$ and $\bar{x}\in\mathbb{R}^n$, such that
\[
f(x)\equiv\pm a\big(\frac1 {d+|x-\bar{x}|^2}\big)^\frac{\mu}2.
\]
\end{lm}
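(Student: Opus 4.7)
The plan is to turn the conformal-type functional equation into an algebraic one for $f$ alone: first determine $\lambda(x)$ in terms of $f(x)$ via an asymptotic analysis, then classify the resulting symmetric relation.

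\emph{Step 1.} Fix $x$, write $\lambda = \lambda(x)$, and set $z := x + \lambda^2(y-x)/|y-x|^2$. As $|y|\to\infty$, $z \to x$, so by continuity $f(z) \to f(x)$. Passing to the limit in $f(y) = (\lambda/|y-x|)^\mu f(z)$ yields
$$\lim_{|y|\to\infty}|y-x|^\mu f(y) = \lambda(x)^\mu f(x).$$
The left-hand side is intrinsic to $f$ and independent of $x$, so the constant $C := \lim_{|y|\to\infty}|y|^\mu f(y)$ exists and satisfies $C = \lambda(x)^\mu f(x)$ for every $x$. If $C=0$, a short direct argument forces $f\equiv 0$, matching the conclusion with $a=0$. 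If $C\ne 0$, then $f$ vanishes nowhere, hence has constant sign by continuity; absorbing this sign into $\pm a$, assume $f>0$, so $\lambda(x)=(C/f(x))^{1/\mu}$.

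\emph{Step 2.} Substituting this expression for $\lambda(x)$ back into the hypothesis yields the symmetric relation
$$f(x) f(y) |x-y|^\mu = C f(z_{x,y}), \qquad z_{x,y} = x + (C/f(x))^{2/\mu}(y-x)/|y-x|^2,$$
for all $x \ne y$. In the variable $g := f^{-2/\mu}$, a positive continuous function, this reads as a Kelvin-type invariance $g(z_{x,y}) = \lambda(x)^2 g(y)/|y-x|^2$. Using both orderings $(x,y)$ and $(y,x)$ and exploiting that the maps $y \mapsto z_{x,y}$ sweep out $\mathbb{R}^n\setminus\{x\}$ as $y$ varies, one deduces that the continuous function $g$ must be an affine quadratic polynomial, necessarily of the form $g(x) = \beta(d + |x-\bar x|^2)$ with constants $\beta, d > 0$ and $\bar x \in \mathbb{R}^n$. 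Back-substitution yields $f(x) = \pm a(d + |x-\bar x|^2)^{-\mu/2}$ with $a = \beta^{-\mu/2}$.

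\emph{Main obstacle.} The delicate point is the algebraic classification of $g$ from its functional equation using only continuity. One clean route is to first bootstrap smoothness of $f$: the identity in Step 2 implicitly expresses $f$ near any point as a smooth composition involving $f$-values elsewhere, which can be iterated to upgrade continuity to $f \in C^\infty$, so that the classical proof of Li--Zhu \cite{LZ1995} for $C^1$ functions applies. Alternatively one pursues a direct approach, plugging several generic point configurations into the functional equation and extracting enough algebraic constraints to identify $g$ as an element of $\mathrm{span}\{1, x_1, \ldots, x_n, |x|^2\}$; without a priori regularity this route is substantially more technical.
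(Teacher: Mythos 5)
This lemma is cited in the paper from Li \cite{Li2004} (Lemma 5.8, attributed to Li--Nirenberg); the paper gives no proof of its own, so there is nothing internal to compare your argument against. Evaluating it on its own terms: Steps 1--2 are sound and do match the natural opening of Li's argument. Taking $|y|\to\infty$ in the hypothesis correctly yields the existence of $C=\lim_{|y|\to\infty}|y|^{\mu}f(y)=\lambda(x)^{\mu}f(x)$, disposes of the case $C=0$, pins down $\lambda(x)=(C/f(x))^{1/\mu}$, and the passage to $g=f^{-2/\mu}$ and the identity $g(z_{x,y})=\lambda(x)^{2}g(y)/|x-y|^{2}$ are all correct (modulo the implicit assumption $\lambda>0$, needed anyway for non-integer $\mu$).

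The gap is that the final and central step --- deducing from this functional identity together with mere continuity that $g$ lies in $\mathrm{span}\{1,x_1,\dots,x_n,|x|^2\}$ --- is precisely the content of the lemma, and you do not carry it out. Worse, the bootstrap you propose as the ``clean route'' does not work: fixing $x$ and writing $z\mapsto y(z)$ for the inverse of $y\mapsto z_{x,y}$, the identity reads
\[
g(z)=\frac{C^{2/\mu}\,g(x)}{|x-y(z)|^{2}}\;g\bigl(y(z)\bigr),
\]
which expresses $g$ near $z$ as a smooth coefficient (in $z$) times the composition of $g$ with a smooth map. Such a relation reproduces the existing regularity of $g$ but gives no improvement of it; iterating does not help, since each iterate still carries one free evaluation of $g$. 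So you cannot upgrade $C^{0}$ to $C^{1}$ this way and hence cannot fall back on the Li--Zhu $C^{1}$ argument of \cite{LZ1995}. The second route you sketch (``plugging in several generic point configurations'') is the one actually needed, and it is nontrivial; for instance, after normalizing $\beta:=\lim_{|y|\to\infty}g(y)/|y|^{2}$, one can show that $w(y):=g(y)-\beta|y|^{2}$ satisfies the invariance $\bigl(w(y^{x,\lambda(x)})-w(x)\bigr)/|y^{x,\lambda(x)}-x| = \bigl(w(y)-w(x)\bigr)/|y-x|$ along every ray from $x$, and one must then combine this over varying base points $x$ to force $w$ affine. As written, your proposal stops exactly where the real work begins, so it does not constitute a proof of the lemma.
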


We are now ready to give a proof to Theorem \ref{sys-critical}.

\noindent\textbf{Proof of Theorem \ref{sys-critical}.}

We first show that $\bar{\lambda}(x)$ is finite for some $x\in \mathbb{R}^n$. Otherwise, $\bar{\lambda}(x)=\infty $ for all $x
\in\mathbb{R}^n$, then for $\xi,\eta\in \mathbb{R}^n,$
\[u_{x,\lambda}(\xi)\ge u(\xi), ~\text{and}~v_{x,\lambda}(\eta)\ge v(\eta)\quad \forall\, |\xi-x|, \, |\eta-x|>\lambda,
\]
By Lemma \ref{Li-1},  we know
that $u=v=constant$, which can not satisfy \eqref{sys-1}.

Now, for a fixed $x\in\mathbb{R}^n$, we know from the definition of $\bar{\lambda}(x)$, that,
\[u_{x,\lambda}(\xi)\ge u(\xi),~~~~~~~~~~~~~\forall \,0<\lambda<\bar{\lambda}(x), \, \forall \, |\xi-x|\ge\lambda.\]
From Lemma \ref{Reg-lm-1} $(ii)$, we have, for any $\lambda\in (0, \, \bar{\lambda}(x)),$ that
\begin{eqnarray*}\label{NI-8}
0&<&a=\lim_{|\xi|\to\infty}|\xi|^{n-\alpha}u(\xi)\le\lim_{|\xi|\to\infty}|\xi|^{n-\alpha}u_{x,\lambda}(\xi)=\lambda^{n-\alpha}u(x).
\end{eqnarray*} This shows $\bar{\lambda}(x)<\infty$ for all $x\in\mathbb{R}^n$.
 Applying Lemma \ref{K-lm-2}, we know
 \[u_{x,\bar{\lambda}}(\xi)=u(\xi),\quad\text{and}~v_{x,\bar{\lambda}}(\eta)=v(\eta),~~~~~~~~~~~ ~~~~~~~~~~~\forall\, x,\xi,\eta\in\mathbb{R}^{n}.
 \]
 It then follows from Lemma \ref{Li-2}, that
 \begin{equation*}
 u(\xi)=c_1\big(\frac{1}{|\xi-\xi_0|^2+d^2}\big)^\frac{n-\alpha}2
\end{equation*} and
\begin{equation*}
 v(\xi)=c_2\big(\frac{1}{|\xi-\xi_0|^2+d^2}\big)^\frac{n-\alpha}2.
\end{equation*}
for some $c_1, \, c_2>0,d>0$ and $ \xi_0\in \mathbb{R}^{n}.$
%Using the sharp constant of inequality \eqref{HLS-ineq-2}, it can be verified
%\[c_1^\frac{n+\alpha}2 =c_2^{\frac{n-\alpha}2}.
%\]
\hfill$\Box$

%{\bf We are left to compute the precise sharp constant as in Lieb's paper. 4-21-2012}

%~~~~~~~~~~~~~~~~~~~~~~~~~~~~~~~~~~~~~~~~~~~~~~~~~~~~~~~ ~~~~~~~~~~~~~~~~~~~~~
\subsubsection{The Best constant $N^*(n, \alpha)$}\label{subection 2.6}

It follows from Theorem \ref{sys-critical} and direct computation that all extremal functions to inequality \eqref{HLS-ineq-2} can be represented by
$$
F(\xi)=a(1-\xi\cdot \eta)^{-\frac{n+\alpha} 2}
$$
for some $a>0$ and $\eta\in \mathbb{R}^{n+1}$ with $|\eta|<1$. In particular, $F(\xi)=1 $ is an extremal function.

%
%\begin{lm}\label{best constant}
%For $\lambda=n-\alpha$, $p=\frac {2n}{n+\alpha}$ and $\alpha>n$,
% \[N_1^*(n,\lambda)=\pi^{\lambda /2} \frac{\Gamma(n/2-\lambda/2)}{\Gamma(n-\lambda/2)} \{\frac{\Gamma(n/2)}{\Gamma(n)} \}^{-1+\lambda/n}.
% \]
%\end{lm}
%\begin{proof}
% From Theorem \ref{sys-critical}, we know that $f(x)=(1+|x|^2)^{-n/p}$ is an extremal function for inequality \eqref{HLS-ineq-1} with the best constant. It follows that
%

%We point out the integral $\int_{\mathbb{S}^n}|\xi-\eta|^{\alpha-n}d\eta$ is independent of $\xi$ using rotational invariance of the measure. Thus we can select a special point $\xi=(0,0,\cdots,0,1)$, the north polar.
Note that
%ing that $|\xi-\eta|=2(1-\xi\cdot\eta)$ on $\mathbb{S}^n$, it following from usual change of polar coordinates that
\begin{eqnarray*}
\int_{\mathbb{S}^n}|\xi-\eta|^{\alpha-n}d\eta =2^{\alpha-1}|\mathbb{S}^{n-1}|\frac{\Gamma(n/2)\Gamma(\alpha/2)}{\Gamma((n+\alpha)/2)}.
%&=&
%2^\frac{\alpha-n}{2}\int_{\mathbb{S}^n}(1-\eta_{n+1})^{\frac{\alpha-n}2}d\eta\\
%&=&2^\frac{\alpha-n}{2}|\mathbb{S}^{n-1}|\int_0^\pi(\sin\theta)^{n-1}(1-\cos\theta)^{\frac{\alpha-n}2}d\theta \\
%&=&2^{\frac{\alpha-n}{2}+1}|\mathbb{S}^{n-1}|\int_0^\frac\pi2(\sin2\tau)^{n-1}(1-\cos2\tau)^{\frac{\alpha-n}2}d\tau \\
%&=&2^{\frac{\alpha-n}{2}+1}|\mathbb{S}^{n-1}|\int_0^\frac\pi2(2\sin\tau\cos\tau)^{n-1}(2\sin^2\tau)^{\frac{\alpha-n}2}d\tau \\
%&=&2^{\frac{\alpha-n}{2}+1+n-1+\frac{\alpha-n}{2}}|\mathbb{S}^{n-1}|\int_0^\frac\pi2(\cos\tau)^{n-1}(\sin\tau)^{n-1+\alpha-n}d\tau \\
%&=&2^{\alpha-1}|\mathbb{S}^{n-1}|2\int_0^\frac\pi2(\cos\tau)^{n-1}(\sin\tau)^{\alpha-1}d\tau \\
%&=&2^{\alpha-1}|\mathbb{S}^{n-1}|\frac{\Gamma(n/2)\Gamma(\alpha/2)}{\Gamma((n+\alpha)/2)}.
\end{eqnarray*}
%Note that the fact
%\[|\mathbb{S}^n|= 2\pi^{\frac{n+1}2}\big[\Gamma\big(\frac{n+1}2\big)\big]^{-1}=2^{n}\pi^{\frac{n}2}\frac{2^{1-n}\sqrt{\pi}}{\Gamma((n+1)/2)}=2^{n}\pi^{\frac{n}2}\frac{\Gamma(n/2)}{\Gamma(n)}.
%\]
We have
\begin{eqnarray*}
N^*(n,\alpha)
&=&|\mathbb{S}^n|^{\frac1q-\frac1p}\int_{\mathbb{S}^n}|\xi-\eta|^{\alpha-n}d\eta\\
&=&|\mathbb{S}^n|^{ -\frac\alpha n}2^{\alpha-1}|\mathbb{S}^{n-1}|\frac{\Gamma(n/2)\Gamma(\alpha/2)}{\Gamma((n+\alpha)/2)}\\
&=&\pi^{\frac{n-\alpha}2}\frac{ \Gamma(\alpha/2)}{\Gamma((n+\alpha)/2)}\big(\frac{\Gamma(n/2)}{\Gamma(n)}\big)^{ -\frac\alpha n}.
\end{eqnarray*}
We hereby complete the proof of Theorem \ref{ext-HLS-exis}.

\medskip

 \vskip 1cm
\noindent {\bf Acknowledgements}\\
The main results  were announced by M. Zhu in a seminar talk at Princeton in April of 2012. M. Zhu would like to thank A. Chang and P. Yang for the arrangement. He also would like to thank R. Frank, E. H. Lieb and Y. Y. Li for some interesting discussions. In particular, we are indebted to R. Frank, who pointed out some errors in  an early version, and to C. Li, whose comment leads us to drop an extra condition for Theorem \ref{ext-HLS-exis} in an early version. This paper is written while J. Dou was visiting  Department of
Mathematics in the University of Oklahoma. He thanks the department
for its hospitality. The work of J. Dou is partially supported by the
National Natural Science Foundation of China (Grant No. 11101319) and
CSC project for visiting The University of Oklahoma.
%% bibliography--------------------------------------------------------------------
%\begin{center}
\small

\end{document}